\newtheorem{remark1}{Remark}[section]
\numberwithin{figure}{section}
\numberwithin{table}{section}
\numberwithin{equation}{section}
\numberwithin{theorem}{section}
\numberwithin{lemma}{section}
\numberwithin{algorithm}{section}
\numberwithin{remark1}{section}
\newcommand\argmin{\mathop{\text{argmin}}}
\newcommand\bA{\boldsymbol{A}}
\newcommand\bb{\boldsymbol{b}}
\newcommand\bx{\boldsymbol{x}}
\newcommand\bI{\boldsymbol{I}}
\newcommand\bR{\boldsymbol{R}}
\newcommand\bQ{\boldsymbol{Q}}
\newcommand\bC{\boldsymbol{C}}
\newcommand\bL{\boldsymbol{L}}
\newcommand\bU{\boldsymbol{U}}
\newcommand\bZ{\boldsymbol{Z}}
\newcommand\bV{\boldsymbol{V}}
\newcommand\bW{\boldsymbol{W}}
\newcommand\bM{\boldsymbol{M}}
\newcommand\bN{\boldsymbol{N}}
\newcommand\bB{\boldsymbol{B}}
\newcommand\bT{\boldsymbol{T}}
\newcommand\bD{\boldsymbol{D}}
\newcommand\bF{\boldsymbol{F}}
\newcommand\bG{\boldsymbol{G}}
\newcommand\bH{\boldsymbol{H}}
\newcommand\bP{\boldsymbol{P}}
\newcommand\bp{\boldsymbol{p}}
\newcommand\bJ{\boldsymbol{J}}
\newcommand\bGamma{\boldsymbol{\Gamma}}
\newcommand\bff{\boldsymbol{f}}
\newcommand\br{\boldsymbol{r}}
\newcommand\bs{\boldsymbol{s}}
\newcommand\bz{\boldsymbol{z}}
\newcommand\bu{\boldsymbol{u}}
\newcommand\bv{\boldsymbol{v}}
\newcommand\bw{\boldsymbol{w}}
\newcommand\by{\boldsymbol{y}}
\newcommand\be{\boldsymbol{e}}
\newcommand\bd{\boldsymbol{d}}
\newcommand\bq{\boldsymbol{q}}
\newcommand\bg{\boldsymbol{g}}
\newcommand\bSigma{\boldsymbol{\Sigma}}
\newcommand\bDelta{\boldsymbol{\Delta}}
\newcommand\bepsilon{\boldsymbol{\epsilon}}
\newcommand\bLambda{\boldsymbol{\Lambda}}
\let\oldequation\equation
\let\oldendequation\endequation
\renewenvironment{equation}
{\linenomathNonumbers\oldequation}
{\oldendequation\endlinenomath}
\let\oldalign\align
\let\oldendalign\endalign
\renewenvironment{align}
{\linenomathNonumbers\oldalign}
{\oldendalign\endlinenomath}
\begin{document}
	
	% \pagewiselinenumbers
	
	\title{Subspace projection regularization for large-scale Bayesian linear inverse problems
%		\thanks{This work was supported in part by the National Natural Science Foundation of China under Grant No. 3192270206.
%			%General acknowledgments should be placed at the end of the article.
%	}
}
	%\subtitle{Do you have a subtitle?\\ If so, write it here}
	
	\titlerunning{Subspace projection regularization}  % if too long for running head
	
	\author{Haibo Li}
	
	%\authorrunning{Short form of author list} % if too long for running head
	
	\institute{Haibo Li \at
		School of Mathematics and Statistics, The University of Melbourne, Parkville, VIC 3010, Australia. \\
		\email{haibo.li@unimelb.edu.au}   %  \\
	}
	
	%\date{Received: date / Accepted: date}
	% The correct dates will be entered by the editor
	
	\maketitle

\begin{abstract}
	The Bayesian statistical framework provides a systematic approach to enhance the regularization model by incorporating prior information about the desired solution. For the Bayesian linear inverse problems with Gaussian noise and Gaussian prior, we propose an iterative regularization algorithm within the subspace projection regularization (SPR) framework. By treating the forward model matrix as a linear operator between two proper finite dimensional Hilbert spaces, we first introduce an iterative process that can generate a series of valid solution subspaces. The SPR method then projects the original problem onto these solution subspaces to get a series of low dimensional linear least squares problems, where an efficient procedure is developed to update the solutions of them to approximate the desired solution of the original problem. With the newly designed early stopping rules, this iterative algorithm can obtain a regularized solution with satisfied accuracy. We establish several theoretical results to reveal the regularization properties of the algorithm. Both small-scale and large-scale inverse problems are used to test the proposed algorithm and demonstrate its robustness and efficiency. The most computationally intensive operations in the proposed algorithm only involve matrix-vector products, making it highly efficient for large-scale problems. 
\keywords{Bayesian inverse problem \and Gaussian prior \and subspace projection regularization \and generalized Golub-Kahan bidiagonalization \and semi-convergence \and early stopping}
% \PACS{PACS code1 \and PACS code2 \and more}
\subclass{62F15 \and 65J22 \and 65F22 \and 65F10}
\end{abstract}

%%------------------------------------------------------------------------
\section{Introduction}
% Describe inverse problems with applications 
Inverse problems arise frequently in various scientific and engineering fields such as medical imaging, computed tomography, geoscience, data assimilation and so on \cite{Kaip2006,Hansen2006,Buzug2008,Law2015,Richter2016}. In these problems, we aim to reconstruct unknown parameters or functions from indirect measurements that often include noise. In this paper, we consider linear inverse problems of the discrete form
\begin{equation}\label{inverse1}
	\bb = \bA\bx+\bepsilon, \ \ \bepsilon\sim\mathcal{N}(\boldsymbol{0}, \bM),
\end{equation}
where $\bx\in\mathbb{R}^{n}$ is the underlying quantity we wish to reconstruct, $\bA\in\mathbb{R}^{m\times n}$ is the discretized forward model matrix, $\bb\in\mathbb{R}^{m}$ is the vector of observation,  and $\bepsilon$ is the noise that follows a zero mean Gaussian distribution with positive definite covariance matrix $\bM$. In the sense of Hadamard \cite{hadamard1923lectures}, this problem is usually ill-posed, which means that there may be multiple solutions that fit the observation equally well, or the solution is very sensitive respect to observation perturbation. The ill-posedness of inverse problems is a big challenge that prohibits us from computing an accurate solution by a direct inversion method.

% Regularization from a Bayesian perspective
Regularization is a very common technique to overcome ill-posedness, where the core idea is to use prior knowledge about the underlying solution to constrain the set of possible solutions and improve their stability and uniqueness properties. Here we deal with the inverse problem \eqref{inverse1} and its regularization from a Bayesian perspective \cite{Kaip2006,Stuart2010}, where the unknown and observation are modeled as random variables, and prior knowledge and assumptions about the unknown are combined with the observation to obtain a posterior probability distribution of $\bx$. By \eqref{inverse1}, the observation vector $\bb$ has a conditional probability density function (pdf)
\begin{equation*}
	p(\bb|\bx) \propto \exp\left(-\frac{1}{2}\|\bA\bx-\bb\|_{\bM^{-1}}^{2}\right).
\end{equation*}
Maximizing $p(\bb|\bx)$ with respect to $\bx$ is known as the maximum likelihood estimate (MLE), where $\|\bx\|_{\bB}:=(\bx^{\top}\bB\bx)^{1/2}$ is the $\bB$-norm of $\bx$ for a positive definite matrix $\bB$. To regularize MLE, we assume a Gaussian prior about the desired solution with the form $\bx\sim\mathcal{N}(\boldsymbol{0}, \lambda^{-1}\bN)$, where $\bN$ is a positive definite covariance matrix. Using the Bayes' formula that estimates posterior pdf of $\bx$ after observation $\bb$, we have 
\begin{equation*}
	p(\bx|\bb,\lambda) 
	\propto p(\bx|\lambda)p(\bb|\bx)
	\propto \exp\left(-\frac{1}{2}\|\bA\bx-\bb\|_{\bM^{-1}}^{2}-\frac{\lambda}{2}\|\bx\|_{\bN^{-1}}^2\right).
\end{equation*}
The maximum a posterior estimate (MAP) of $\bx$ finds a regularized solution to \eqref{inverse1} by maximizing the posterior pdf of $\bx$, which leads to
\begin{equation}\label{Bayes1}
	\min_{\bx \in \mathbb{R}^{n}}\{\|\bA\bx-\bb\|_{\bM^{-1}}^{2} + \lambda\|\bx\|_{\bN^{-1}}^2\}.
\end{equation}
The form of \eqref{Bayes1} is the same as the Tikhonov regularization \cite{Tikhonov1977}, where the regularization term $\|\bx\|_{\bN^{-1}}^2$ enforces extra structure on the solution that comes from the prior distribution of $\bx$. The regularization parameter $\lambda$ controls the trade-off between the data-fit term and regularization term, which should be determined in advance.

% Difficulties for Tikhonov regularization
The Bayesian statistical framework offers a systematic approach to enhance the regularization model by incorporating prior information about the solution. However, developing efficient algorithms for computing those informative solutions is very challenging. Firstly, the quality of the reconstructed solution heavily relies on the accuracy of the regularization parameter. Although various methods have been proposed, such as the L-curve (LC) method \cite{Hansen1992}, generalized cross-validation (GCV) \cite{Golub1979} and discrepancy principle (DP) \cite{Morozov1966}, but each has its limitations and assumptions, and these methods require solving \eqref{Bayes1} for many different values of $\lambda$. Secondly, to solve \eqref{Bayes1}, the Cholesky factorizations $\bM^{-1}=\bL_{M}^{\top}\bL_{M}$ and $\bN^{-1}=\bL_{N}^{\top}\bL_{N}$ are required to transform \eqref{Bayes1} to the standard 2-norm form
\begin{equation}\label{gen_regu}
	\min_{\bx \in \mathbb{R}^{n}}\{\|\bL_{M}(\bA\bx-\bb)\|_{2}^{2} + \lambda\|\bL_{N}\bx\|_{2}^2\}
\end{equation}
if an iterative solver for the above least squares problem is exploited; or $\bM^{-1}$ and $\bN^{-1}$ must be computed if an iterative solver for the normal equation of \eqref{Bayes1} is exploited. For both the cases, it is not feasible to work with the matrix inversion and factorization if its dimension is very large or the explicit form of the matrix is not available.

% Focus on Krylov method, priorconditioned method, some SOTA method such as gen-GKB
Up to now, there are many numerical methods that deal with large-scale Bayesian linear inverse problems; here we review iterative methods that are based upon Krylov subspaces \cite{liesen2013krylov}. The main idea behind these methods is to project the original linear system \eqref{inverse1} onto a series of suitable low dimensional subspaces and solve these small projected problems step by step, and the iteration should be terminated early to avoid too many noisy components included in the solution \cite{Kaip2006,Calvetti2018}. In this approach, a key point is that these subspaces must be elaborately constructed by an iterative process that can incorporate information about the prior distribution of $\bx$. To this end, the priorconditioning technique is proposed, where the statistically inspired left and right priorconditioners are used to whiten the noise and unknown, respectively \cite{calvetti2005priorconditioners,calvetti2017priorconditioned,Calvetti2018}. However, it usually requires that $\bM^{-1}$ and its Cholesky factorization can be computed easily, and $\bL_{N}$ instead of $\bN^{-1}$ should be explicitly constructed as the right priorconditioner. In some scenarios of applications, the covariance matrix $\bN$ instead of $\bL_{N}$ is used to add prior information about $\bx$, such as the Met\'{e}rn class of covariance matrices \cite{Genton2001classes,Roininen2011correlation,antil2023efficient}. This raises the need for new methods to construct suitable solution subspaces to incorporate prior information about $\bx$ while avoiding Cholesky factorizations and inversions of the prior covariance matrices.

Recently, a generalized hybrid iterative method is proposed for large-scale Bayesian linear inverse problems \cite{Chung2017}, which is based on an iterative process called the generalized Golub-Kahan bidiagonalization (gen-GKB) \cite{Arioli2013}. This method first transforms \eqref{Bayes1} to
\begin{equation}\label{trans_tikh}
	\min_{\bar{\bx} \in \mathbb{R}^{n}}\{\|\bA\bN\bar{\bx}-\bb\|_{\bM^{-1}}^{2} + \lambda\|\bar{\bx}\|_{\bN}^2\}
\end{equation}
by the substitution $\bx=\bN\bar{\bx}$ to avoid the inversion of $\bN$. Then it uses gen-GKB to generate a series of $\bN$-orthonormal bases of $k$-dimensional Krylov subspaces for $k=1,2,\dots$ and projects \eqref{trans_tikh} onto these subspaces to get a series of $k$-dimensional Tikhonov regularization problems. To avoid choosing a good regularization parameter in advance, this method adapts a hybrid regularization approach, which determines a regularization parameter $\lambda_k$ for the projected $k$-dimensional problem at each $k$-th step using a parameter choosing criterion such as GCV \cite{Kilmer2001}. By determining $\lambda_k$ at each iteration, this method computes a series of iterative solutions which are used to approximate the best solution to \eqref{trans_tikh} with the optimal parameter $\lambda_{opt}$.

% This method belongs to the framework of hybrid regularization methods, where at each iteration \eqref{Bayes1} is projected to small dimensional subspaces and the leading small-scale problem is solved with the regularization parameter determined by a parameter choosing method such asGCV . Instead of dealing with \eqref{gen_regu}, this method applies the gen-GKB process to generate a series of suitable solution subspaces to tranform \eqref{Bayes1} to a more convenient form. 

The gen-GKB based hybrid regularization method is reasonable in the sense that the iterative solution lies within the same subspace as the MAP estimator of the original Bayesian inverse problem \eqref{Bayes1}, thereby the prior information of $\bx$ encoded by the Gaussian prior can be well incorporated into the iterative solution. Furthermore, the computations of $\bN^{-1}$ and factorizations of $\bM^{-1}$ and $\bN^{-1}$ are not needed, which makes it very practical for solving large-scale inverse problems. Although this method is effective for many types of inverse problems, sometimes the regularization parameter $\lambda_k$ can not be well determined, resulting in that $\lambda_k$ does not converge to $\lambda_{opt}$ or converges very slowly, and the corresponding iterative solution have a poor accuracy; this is a common potential flaw for hybrid methods \cite{Chungnagy2008,Renaut2017}. Besides, this method needs to transform \eqref{Bayes1} to \eqref{trans_tikh} first to apply gen-GKB, and there is not a direct motivation provided to explain why gen-GKB should be used to generate valid solution subspaces for \eqref{Bayes1}.

% Our new perspective on gen-GKB different from another view; subspace projection in a new inner-product space
In order to avoid choosing regularization parameter in advance for \eqref{Bayes1} or step by step for the generalized hybrid method, we consider the iterative subspace projection regularization (SPR) method \cite[\S 3.3]{Engl2000}. For the Bayesian linear inverse problem \eqref{Bayes1}, the core idea is to seek a series of $\bx_k$ as the solution to
\begin{equation*}
	\min_{\bx\in\mathcal{X}_k}\|\bx\|_{\bN^{-1}}, \ \ \mathcal{X}_k = \{\bx: \min_{\bx\in\mathcal{S}_k}\|\bA\bx-\bb\|_{\bM^{-1}}\} ,
\end{equation*}
where $\mathcal{S}_k$ is the subspace of $\mathbb{R}^{n}$ of dimension $k=1,2,\dots$, and the iteration proceeds until an early stopping rule is satisfied to overcome too many noisy components are contained. We call $\mathcal{S}_k$ the \textit{solution subspace}, which should be constructed elaborately such that the information about the Gaussian noise $\bepsilon$ and desired property of $\bx$ encoded by the Gaussian prior can be incorporated into it. In this paper, we propose an iterative regularization algorithm within the SPR framework based on an iterative process that is equivalent to gen-GKB. This new algorithm exhibits good performance for indirectly solving the Bayesian linear inverse problem \eqref{Bayes1} and is very efficient for large-scale problems.

% list three main contributions
The main contributions of this paper are listed as follows:
\begin{itemize}
	\item[$\bullet $] By treating the matrix $\bA$ as a bounded linear operator between the two Hilbert spaces $(\mathbb{R}^{n}, \langle\cdot,\cdot\rangle_{\bN^{-1}})$ and $(\mathbb{R}^{m}, \langle\cdot,\cdot\rangle_{\bM^{-1}})$, which are equipped with the $\bN^{-1}$- and $\bM^{-1}$-inner products, we introduce an iterative process that can generate a series of $\bN^{-1}$-orthonormal bases of $k$-dimensional subspaces of $(\mathbb{R}^{n}, \langle\cdot,\cdot\rangle_{\bN^{-1}})$. This process is mathematically equivalent to gen-GKB, thereby we use the same name for it. The use of the new inner products instead of the standard ones provides a very natural view for developing iterative regularization methods for Bayesian linear inverse problems.
	\item[$\bullet $] With the solution subspace spanned by the $\bN^{-1}$-orthonormal basis generated by gen-GKB, at the $k$-th step the SPR method reduces the original problem to a $k$-dimensional standard linear least squares problem. We further design an efficient procedure that can quickly update the iterative solution $x_k$ without solving the $k$-dimensional problem directly; the residual norm $\|\bA\bx_k-\bb\|_{\bM^{-1}}$ and solution norm $\|\bx\|_{\bN^{-1}}$ can also be updated quickly without computing $\bM^{-1}$ and $\bN^{-1}$. Using the residual norm or solution norm, we design three early stopping rules based on DP, LC and GCV, respectively. The most computationally intensive operations in the proposed algorithm only involve matrix-vector, thereby it is very efficient for large-scale problems.
	\item[$\bullet $] Several theoretical results related to the regularization properties of the proposed algorithm are established. Using the generalized singular value decomposition (GSVD) of $\{\bL_{M}\bA, \bL_{N}\}$, we give an explicit expression of the solution subspace and build connections between it and the best solution subspace for the SPR method (i.e. the subspace spanned by the dominant right generalized singular vectors). We prove that the iterative solution has a filtered GSVD expansion form, where some dominant GSVD components are captured and the others are filtered out. All these results reveal that the solution subspace can incorporate the information about the Gaussian noise $\bepsilon$ and desired property of $\bx$ encoded by the Gaussian prior, thereby the proposed algorithm can get a good regularized solution with similar accuracy as the best Tikhonov solution to \eqref{Bayes1}.
\end{itemize}

% impact of our results and potential applications
We use both small-scale and large-scale inverse problems to test the proposed algorithm and compare it with the generalized hybrid method in \cite{Chung2017}. The experimental results show that our algorithm is more robust and efficient for large-scale Bayesian linear inverse problems.

% The outline of the paper
This paper is organized as follows. In Section \ref{sec2}, we give the expression of the solution to \eqref{Bayes1} by using GSVD of $\{\bL_{M}\bA, \bL_{N}\}$. In Section \ref{sec3}, we derive the gen-GKB process by treating $\bA$ as a bounded linear operator between the two Hilbert spaces $(\mathbb{R}^{n}, \langle\cdot,\cdot\rangle_{\bN^{-1}})$ and $(\mathbb{R}^{m}, \langle\cdot,\cdot\rangle_{\bM^{-1}})$, and then propose the gen-GKB based SPR algorithm with proper early stopping rules. In Section \ref{sec4}, we establish several theoretical results to reveal the regularization properties of the proposed algorithm. Numerical results are presented in Section \ref{sec5}, and conclusions are provided in Section \ref{sec6}.

%%-------------------------------------------------------------------
\section{Solving the Bayesian linear inverse problem by GSVD} \label{sec2}
% GSVD form of MAP estimator
For analyzing regularized inverse problems, GSVD is a useful tool \cite{Hansen1989}. Since \eqref{gen_regu} and \eqref{Bayes1} are equivalent, we use the GSVD of the matrix pair $\{\bL_{M}\bA, \bL_{N}\}$ to discuss some properties of the MAP estimator that is the solution to \eqref{gen_regu}. Since $\bL_{N}$ is invertible, the GSVD of $\{\bL_{M}\bA, \bL_{N}\}$ can be written as
\begin{equation}\label{gsvd}
	\bL_{M}\bA = \bU_A\bSigma_A\bZ^{-1}, \ \ 
	\bL_{N} = \bU_L\bSigma_L\bZ^{-1},
\end{equation}
with
\begin{equation}\label{gsvd1}
	\bSigma_A = \bordermatrix*[()]{%
		\bD_{A}  &  &  r \cr
		&  \boldsymbol{0} & m-r \cr
		r & n-r 
	} , \ \ \
	\bSigma_L =
	\bordermatrix*[()]{%
		\bD_{L}  &  & r \cr
		&  \bI  & n-r \cr
		r & n-r
	} ,
\end{equation}
where $\bD_{A}=\mathrm{diag}(\sigma_1,\dots,\sigma_r)$, $1>\sigma_1\geq\cdots\geq\sigma_r>0$ and $\bD_{L}=\mathrm{diag}(\mu_1,\dots,\mu_r)$, $0<\mu_1\leq\cdots\leq\mu_r<1$, and it holds that $\sigma_{i}^{2}+\mu_{i}^{2}=1$. Throughout the paper, we denote by $\bI$ and $\boldsymbol{0}$ the identity matrix and zero matrix/vector, respectively, with orders clear from the context. The two matrices $\bU_{A}\in\mathbb{R}^{m\times m}$ and $\bU_{L}\in\mathbb{R}^{n\times n}$ are orthogonal, and $\bZ$ is invertible. Let $\bZ=(\bz_1,\dots,\bz_n)$. Define $\gamma_i=\sigma_i/\mu_i$ for $1\leq i\leq r$ and $\gamma_i=0$ for $r+1\leq i\leq n$. We call $\gamma_i$ the $i$-th largest generalized singular value and $\bz_i$ the $i$-th leading right generalized singular vector. Note that
\begin{equation}\label{gen_eig}
	\bZ^{\top}(\bA^{\top}\bM^{-1}\bA)\bZ = \bSigma_{A}^{\top}\bSigma_{A}, \ \ \ 
	\bZ^{\top}\bN^{-1}\bZ = \bSigma_{L}^2.
\end{equation}
Therefore, $\gamma_{i}^2$ are generalized eigenvalues of the matrix pair $\{\bA^{\top}\bM^{-1}\bA, \bN^{-1}\}$ with $\bz_i$ the corresponding generalized eigenvectors \cite[\S 8.7]{Golub2013}. Since $\bN^{-1}$ is positive definite, we can always normalize $\bz_i$ such that they are mutually $\bN^{-1}$-orthonormal, i.e. $\langle\bz_i,\bz_j\rangle_{\bN^{-1}}=\delta_{ij}$, where $\delta_{ij}$ is the Kronecker symbol and $\langle\bz,\bz'\rangle_{\bN^{-1}}:=\bz^{\top}\bN^{-1}\bz'$ is the $\bN^{-1}$-inner product defined in $\mathbb{R}^{n}$. In the following part, we always assume that $\bz_i$ are mutually $\bN^{-1}$-orthonormal. Note that in this case, $\bSigma_{L}=\bI$ and $\gamma_i=\sigma_i$ for $1\leq i\leq r$.

With the help of GSVD, the solution to \eqref{gen_regu} can be expressed as
\begin{align}\label{Tikh_gsvd}
	\bx_{\lambda} 
    &= [(\bL_{M}\bA)^{\top}\bL_{M}\bA+\lambda\bL_{N}^{\top}\bL_{N}]^{-1}(\bL_{M}\bA)^{\top}\bL_{M}\bb \nonumber \\
	&=\sum_{i=1}^{r}\frac{\gamma_{i}^{2}}{\gamma_{i}^{2}+\lambda}\frac{\bu_{A,i}^{\top}\bL_{M}\bb}{\sigma_i}\bz_i
\end{align}
where $\bu_{A,i}$ is the $i$-th column of $\bU_A$. This is just the filtered GSVD expansion form solution for the MAP estimator $\bx_{\lambda}$, where the filter factors $f_{i}:=\gamma_{i}^{2}/(\gamma_{i}^{2}+\lambda)$ are applied to noisy coefficients $\bu_{A,i}^{\top}\bL_{M}\bb$ to dampen noisy components appeared in the regularized solution \cite{Hansen1989}. The discrete Picard condition (DPC) plays a central role in the regularization of linear inverse problems, which says that the Fourier coefficients $|\bu_{A,i}^{\top}\bb_{\text{true}}|$ on average decay to zero faster than the corresponding $\gamma_{i}$, where $\bb_{\text{true}}=\bA\bx_{\text{true}}$ is the true observation \cite{hansen1990discrete}. Based on DPC, the regularization parameter $\lambda$ should be carefully chosen such that $f_{i}\approx 1$ for small $i$ and $f_{i}\approx 0$ for large $i$. 

To avoid determining $\lambda$ in advance, the most natural alternative approach is to directly let $f_i=1$ for small index and $f_i=0$ for large index, which leads to the so called truncated GSVD (TGSVD) solution
\begin{equation}\label{2.17}
	x_{k}^{\text{tgsvd}} = \sum_{i=1}^{k}\frac{\bu_{A,i}^{\top}\bL_{M}\bb}{\sigma_i}\bz_i.
\end{equation}
By truncating the TGSVD solution at a proper $k$, the obtained solution can capture main information corresponding to dominant right generalized singular vectors while suppressing noise corresponding to others \cite{Hansen1989}. These dominant $\bz_i$ play an important role in the regularized solution, since they contain the desirable information about $\bx$ encoded by the Gaussian prior. 
% Any fruitful regularized method should obtain a solution that is constituted by $\bz_i$, where dominant ones are captured while others are filtered out.

% briefly review the hybrid iterative method in Saibaba 2017
For large-scale Bayesian inverse problems, the Cholesky factorizations of $\bM^{-1}$ and $\bN^{-1}$ and the GSVD are very hard to compute. In this case, the generalized hybrid iterative method proposed in \cite{Chung2017} is a good choice, which first transforms \eqref{Bayes1} to \eqref{trans_tikh} and then uses gen-GKB to project \eqref{Bayes1} onto low dimensional subspaces to get small-scale problems. Although this method is effective for many types of inverse problems, sometimes the regularization parameter $\lambda_k$ can not be well determined, resulting in that $\lambda_k$ does not converge to $\lambda_{opt}$ or converges very slowly, and the corresponding iterative solution can only have a poor accuracy.

In the following part, we approach the Bayesian linear inverse problem from the perspective of subspace projection regularization instead of Tikhonov regularization. This perspective provides a direct motivation to derive our new iterative process for generating solution subspaces. Based on this iterative process, we propose the new iterative regularization algorithm.

%%--------------------------------------------------------------
\section{Subspace projection regularization based on generalized Golub-Kahan bidiagonalization} \label{sec3}
% emphasis importance of solution subspaces, leading to the rationality of gen-GKB
The \textit{subspace projection regularization} (SPR) is a unified framework for developing iterative regularization methods \cite[\S 3.3]{Engl2000}. For the Bayesian linear inverse problem \eqref{Bayes1}, the core idea is to compute a series of $\bx_k$ as the solution to
\begin{equation}\label{spr}
	\min_{\bx\in\mathcal{X}_k}\|\bx\|_{\bN^{-1}}, \ \ \mathcal{X}_k = \{\bx: \min_{\bx\in\mathcal{S}_k}\|\bA\bx-\bb\|_{\bM^{-1}}\} ,
\end{equation}
where $\mathcal{S}_k$ is the subspace of $\mathbb{R}^{n}$ of dimension $k=1,2,\dots$, and the iteration should be early stopped to overcome containing too many noisy components into the solution.

% TGSVD belongs to this framework. Write it as a remark.
\begin{remark1} \label{rem_tgsvd} 
	The TGSVD solution $x_{k}^{\text{tgsvd}}$ is the $k$-th SPR solution if we choose $\mathcal{S}_k=\mathrm{span}\{\bZ_k\}$ for $k=1,2,\dots,r$, where $\bZ_k=(\bz_1,\dots,\bz_k)$. Throughout the paper, we denote by $\mathrm{span}\{\cdot\}$ the subspace spanned by columns of a matrix or a group of vectors. To see it, let $\bx=\bZ_k\by$ to denote any vector in $\mathrm{span}\{\bZ_k\}$, where $\by\in\mathbb{R}^{k}$. Then we have
	\begin{align*}
		\min_{\bx\in\mathcal{S}_k}\|\bA\bx-\bb\|_{\bM^{-1}} 
		&= \min_{\bx=\bZ_ky}\|\bL_{M}(\bA\bx-\bb)\|_{2} \\
		&= \min_{\by\in\mathbb{R}^{k}}\|\bL_{M}\bA\bZ_ky-\bL_{M}\bb\|_{2} \\
		&= \min_{\by\in\mathbb{R}^{k}}\|\bU_{A,k}\bSigma_{A,k}y-\bL_{M}\bb\|_{2} ,
	\end{align*}
	where $\bU_{A,k}$ is the first $k$ part of $\bU_{A}$ and $\bSigma_{A,k}$ is the first $k$-by-$k$ part of $\bSigma$, and the last equality uses the GSVD relation \eqref{gsvd}. The above least squares problem has a unique solution $\by_k=\bSigma_{A,k}^{-1}\bU_{A,k}^{\top}\bL_{M}\bb$. Thus, the solution to \eqref{spr} is $\bx_k=\bZ_k\by_k=\sum_{i=1}^{k}\frac{\bu_{A,i}^{\top}\bL_{M}\bb}{\sigma_i}\bz_i$, which is just $x_{k}^{\text{tgsvd}}$.
\end{remark1}

In fact, for any choice of $\mathcal{S}_k$, there exists a unique solution to \eqref{spr}. Using the same method as the above, let $\bx=\bW_k\by$ be any vector in $\mathcal{S}_k$, where $\bW_k\in\mathbb{R}^{n\times k}$ whose columns are $\bN^{-1}$-orthonormal and span $\mathcal{S}_k$. Then the solution to \eqref{spr} satisfies $\bx_k=\bW_k\by_k$, where $\by_k$ is the solution to
\begin{equation*}
	\min_{\by\in\mathcal{Y}_k}\|\by\|_{2}, \ \ \mathcal{Y}_k = \{\bx: \min_{\by\in\mathbb{R}^{k}}\|\bL_{M}\bA\bW_k\by-\bL_{M}\bb\|_{2}\} .
\end{equation*}
This problem has a unique solution $\by_k=(\bL_{M}\bA\bW_k)^{\dag}\bL_{M}\bb$, where $\bC^{\dag}$ is the pseudoinverse of a matrix $\bC$. Therefore, there exits a unique $\bx_k$.

Comparing to computing the MAP estimator $\bx_{\lambda}$ in the Tikhonov regularization framework \eqref{Bayes1}, the SPR framework utilizes those elaborately constructed subspaces $\mathcal{S}_k$ to enforce additional structure on $\bx$, which plays a similar role as the Tikhonov regularization term $\lambda\|\bx\|_{\bN^{-1}}$ arising from the Gaussian prior of $\bx$. We call $\mathcal{S}_k$ the \textit{solution subspace}, which should be constructed such that the prior information about the desired solution is incorporated into it. Based on the expressions of $\bx_{\lambda}$ and $x_{k}^{\text{tgsvd}}$, the most ideal choice is $\mathcal{S}_k=\mathrm{span}\{\bZ_k\}$, which just leads to the TGSVD solution.

%---------------------------------------------------------
\subsection{Generalized Golub-Kahan bidiagonalization}
For large-scale problems, computing GSVD is impractical. A reasonable SPR method should construct a series of solution subspaces that capture main information corresponding to some leading right generalized singular vectors $\{\bz_i\}_{i=1}^{k}$ as the iteration proceeds. Note that $\bz_i$ are mutually $\bN^{-1}$-orthonormal, and note that \eqref{spr} has the form of weighted least squares. This motivates us to design an iterative process that generates a series of $\bN^{-1}$-orthonormal vectors and projects \eqref{spr} onto subspaces spanned by these vectors.

\begin{definition}\label{def:inner_prod}
	Define two finite dimensional Hilbert spaces as $(\mathbb{R}^{n}, \langle\cdot,\cdot\rangle_{\bN^{-1}})$ and $(\mathbb{R}^{m}, \langle\cdot,\cdot\rangle_{\bM^{-1}})$ that are equipped with the $\bN^{-1}$-inner product $\langle\bx,\bx'\rangle_{\bN^{-1}}:=\bx^{\top}\bN^{-1}\bx'$ and $\bM^{-1}$-inner product $\langle\by,\by'\rangle_{\bM^{-1}}:=\by^{\top}\bM^{-1}\by'$, respectively. 
\end{definition}

Although the two topologies of $\bN^{-1}$- and standard inner product spaces are equivalent, the geometry structures of these two spaces are different. For dealing with \eqref{spr}, using $\bN^{-1}$- and $\bM^{-1}$-inner product is more natural. In order to develop iterative process for $\bA$ between the two spaces $(\mathbb{R}^{n}, \langle\cdot,\cdot\rangle_{\bN^{-1}})$ and $(\mathbb{R}^{m}, \langle\cdot,\cdot\rangle_{\bM^{-1}})$, we first need the adjoint of the linear operator $\bA$.
% \begin{definition}\label{def:adj}
	For the linear operator $\bA: (\mathbb{R}^{n}, \langle\cdot,\cdot\rangle_{\bN^{-1}}) \to (\mathbb{R}^{m}, \langle\cdot,\cdot\rangle_{\bM^{-1}})$ defined as $\bA: \bx\mapsto \bA\bx$ for $\bx\in\mathbb{R}^{n}$ under the canonical bases of $\mathbb{R}^{n}$ and $\mathbb{R}^{m}$, define its adjoint $\bA^{*}: (\mathbb{R}^{m}, \langle\cdot,\cdot\rangle_{\bM^{-1}}) \to (\mathbb{R}^{n}, \langle\cdot,\cdot\rangle_{\bN^{-1}})$ as $\bA^{*}: \by \mapsto \bA^{*}\by$ for $\by\in\mathbb{R}^{m}$, by the relation
	\begin{equation*}
		\langle \bA\bx, \by\rangle_{\bM^{-1}} = \langle \bx, \bA^{*}\by\rangle_{\bN^{-1}} .
	\end{equation*}
% \end{definition}
Since the two Hilbert spaces are finite dimensional, it follows that $\bA$ is bounded, and the adjoint operator $\bA^{*}$ is a well-defined linear operator. Under the above two definitions, we can develop an iterative process that reduces $\bA$ to a bidiagonal form and generates two subspaces of $(\mathbb{R}^{n}, \langle\cdot,\cdot\rangle_{\bN^{-1}})$ and $(\mathbb{R}^{m}, \langle\cdot,\cdot\rangle_{\bM^{-1}})$, respectively. This can be achieved by applying the GKB process to $\bA$ with starting vector $\bb$ between the two Hilbert spaces $(\mathbb{R}^{n}, \langle\cdot,\cdot\rangle_{\bN^{-1}})$ and $(\mathbb{R}^{m}, \langle\cdot,\cdot\rangle_{\bM^{-1}})$; for the GKB process for compact linear operator, see \cite{caruso2019convergence}. The basic recursive relations are as follows:
\begin{align}
	& \beta_1\bu_1 = \bb, \label{GKB1} \\
	& \alpha_{i}\bv_i = \bA^{*}\bu_i -\beta_i\bv_{i-1}, \label{GKB2} \\
	& \beta_{i+1}\bu_{i+1} = \bA\bv_{i} - \alpha_i\bu_i, \label{GKB3}
\end{align}
where $\alpha_i$ and $\beta_i$ are computed such that $\|\bu_i\|_{\bM^{-1}}=\|\bv_i\|_{\bN^{-1}}=1$, and we let $\bv_0=\boldsymbol{0}$. In order to compute $\bA^{*}\bu_i$, we need the matrix-form expression of $\bA^{*}$.

\begin{lemma}\label{lem:adj_matform}
	Under the canonical bases of $\mathbb{R}^{n}$ and $\mathbb{R}^{m}$, we have
	\begin{equation}\label{adjoint}
		\bA^{*} = \bN\bA^{\top}\bM^{-1}.
	\end{equation}
\end{lemma}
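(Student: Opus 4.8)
The plan is to verify the candidate formula $\bA^{*}=\bN\bA^{\top}\bM^{-1}$ directly against the defining relation of the adjoint, and then to note that this relation determines $\bA^{*}$ uniquely, so that the verification suffices. Existence and uniqueness of $\bA^{*}$ are already guaranteed by the finite-dimensionality of the two Hilbert spaces, as remarked just before the statement; what remains is only to identify the representing matrix in the canonical bases.

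First I would rewrite both sides of $\langle \bA\bx,\by\rangle_{\bM^{-1}}=\langle \bx,\bA^{*}\by\rangle_{\bN^{-1}}$ in terms of the standard Euclidean inner product. Using $\langle\bu,\bv\rangle_{\bM^{-1}}=\bu^{\top}\bM^{-1}\bv$ and $\langle\bu,\bv\rangle_{\bN^{-1}}=\bu^{\top}\bN^{-1}\bv$ from Definition \ref{def:inner_prod}, the left-hand side becomes $(\bA\bx)^{\top}\bM^{-1}\by=\bx^{\top}\bA^{\top}\bM^{-1}\by$. Writing $\bC$ for the matrix that represents $\bA^{*}$ in the canonical bases, the right-hand side is $\bx^{\top}\bN^{-1}\bC\by$. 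Equating the two expressions gives $\bx^{\top}\bA^{\top}\bM^{-1}\by=\bx^{\top}\bN^{-1}\bC\by$ for all $\bx\in\mathbb{R}^{n}$ and all $\by\in\mathbb{R}^{m}$.

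Since $\bx$ and $\by$ are arbitrary, the two bilinear forms coincide, which forces the matrix identity $\bA^{\top}\bM^{-1}=\bN^{-1}\bC$. Multiplying on the left by the invertible matrix $\bN$ then yields $\bC=\bN\bA^{\top}\bM^{-1}$, which is exactly \eqref{adjoint}. I do not anticipate a genuine obstacle here: the argument is a one-line computation together with the nondegeneracy of the standard inner product. The only point worth stressing is that \eqref{adjoint} is the representation of $\bA^{*}$ with respect to the \emph{canonical} bases of $\mathbb{R}^{n}$ and $\mathbb{R}^{m}$, not with respect to bases that are orthonormal for the weighted inner products; equivalently, one may read the result as the verification that substituting $\bA^{*}=\bN\bA^{\top}\bM^{-1}$ makes the two sides agree, since $\bx^{\top}\bN^{-1}(\bN\bA^{\top}\bM^{-1})\by=\bx^{\top}\bA^{\top}\bM^{-1}\by$.
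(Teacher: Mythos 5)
Your proposal is correct and follows essentially the same route as the paper's proof: both rewrite the defining relation $\langle\bA\bx,\by\rangle_{\bM^{-1}}=\langle\bx,\bA^{*}\by\rangle_{\bN^{-1}}$ as the matrix identity $\bx^{\top}\bA^{\top}\bM^{-1}\by=\bx^{\top}\bN^{-1}\bA^{*}\by$ for all $\bx,\by$, conclude $\bA^{\top}\bM^{-1}=\bN^{-1}\bA^{*}$, and left-multiply by $\bN$. Your added remarks on uniqueness and on the canonical-basis interpretation are fine but not needed beyond what the paper does.
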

\begin{proof}
	Under the canonical bases, the linear mapping $\bA\bx$ and $\bA^{*}\by$ are the standard matrix-vector products. By the definition of $\bA^{*}$, we have
	\[(\bA\bx)^{\top}\bM^{-1}\by = \bx^{\top}\bN^{-1}\bA^{*}\by \]
	for any $\bx\in\mathbb{R}^{n}$ and $\by\in\mathbb{R}^{m}$. Therefore we have $\bA^{\top}\bM^{-1}=\bN^{-1}\bA^{*}$, and the desired result is immediately obtained. \qed
\end{proof}

In order to avoid explicitly computing $\bN^{-1}$-norm to obtain $\alpha_i$, let $\bar{\bu}_i=\bM^{-1}\bu_i$ and $\bar{\bv}_i=\bN^{-1}\bv_i$. By \eqref{GKB2} and \eqref{adjoint}, we have
\begin{align}
	\alpha_i\bar{\bv}_{i} = \bA^{\top}\bar{\bu}_i - \beta_i\bar{\bv}_{i-1}.
	%& \beta_{i+1}\bar{\bu}_{i+1} = \bM^{-1}\bA\bv_{i} - \alpha_i\bar{\bu}_{i}.
\end{align}
Let $\bar{\br}_i= A^{\top}\bar{\bu}_i - \beta_i\bar{\bv}_{i-1}$ and $\bs_i=\bA\bv_{i} - \alpha_i\bu_{i}$. Then $\alpha_i=\|\bN\bar{\br}_i\|_{\bN^{-1}}=(\bar{\br}_{i}^{\top}\bN\bar{\br}_i)^{1/2}$ and $\beta_{i+1}=\|\bs_i\|_{\bM^{-1}}=(\bs_{i}^{\top}\bM^{-1}\bs_i)^{1/2}$. We remark that computing with $\bM^{-1}$ can not be avoided to get $\bu_i$. For the most commonly encountered cases that $\bepsilon$ is a Gaussian white noise or all of its components are uncorrelated, $\bM$ is diagonal and $\bM^{-1}$ can be directly obtained. 
% Otherwise, an iterative solver should be used to compute $\bar{\bs}_{i}=\bM^{-1}\bs_{i}$ by solving $\bM\bar{\bs}_i=\bs_i$. 
The whole iterative process is summarized in Algorithm \ref{alg1}.

\begin{algorithm}[htb]
	\caption{Generalized Golub-Kahan bidiagonalization (gen-GKB)}\label{alg1}
	\begin{algorithmic}[1]
		\Require $\bA\in\mathbb{R}^{m\times n}$, $\bb\in\mathbb{R}^{m}$, $\bM\in\mathbb{R}^{m\times m}$, $\bN\in\mathbb{R}^{n\times n}$
		\State $\bar{\bs}=\bM^{-1}\bb$ % directly or solving $\bM\bar{\bs}=\bb$
		\State $\beta_1=\bar{\bs}^{\top}\bb$, \ $\bu_1=\bb/\beta_1$, \ $\bar{\bu}_1=\bar{\bs}/\beta_1$
		\State $\bar{\br}=\bA^{\top}\bar{\bu}_1$, \ $\br=\bN\bar{\br}$
		\State $\alpha_1 = (\br^{\top}\bar{\br})^{1/2}$, \ $\bar{\bv}_1=\bar{\br}/\alpha_1$, \ $\bv_1=\br/\alpha_1$
		\For {$i=1,2,\dots,k$}
		\State $\bs=\bA\bv_{i} - \alpha_i\bu_{i}$ 
		\State $\bar{\bs}=\bM^{-1}\bs$ % directly or solving $\bM\bar{\bs}=\bs$
		\State $\beta_{i+1}=(\bs^{\top}\bar{\bs})^{1/2}$, \ $\bu_{i+1}=\bs/\beta_{i+1}$, \ $\bar{\bu}_{i+1}=\bar{\bs}/\beta_{i+1}$ \Comment{$\bar{\bu}_{i+1}=\bM^{-1}\bu_{i+1}$}
		\State $\bar{\br}=\bA^{\top}\bar{\bu}_{i+1} - \beta_{i+1}\bar{\bv}_{i}$
		\State $\br=\bN\bar{\br}$ 
		\State $\alpha_{i+1}=(\br^{\top}\bar{\br})^{1/2}$, \ $\bar{\bv}_{i+1}=\bar{\br}/\alpha_{i+1}$, \ $\bv_{i+1}=\br/\alpha_{i+1}$  \Comment{$\bar{\bv}_{i+1}=\bN^{-1}\bv_{i+1}$}
		\EndFor
		\Ensure $\{\alpha_i, \beta_i\}_{i=1}^{k+1}$, \ $\{\bu_i, \bv_i\}_{i=1}^{k+1}$, \ $\{\bar{\bu}_i, \bar{\bv}_i\}_{i=1}^{k+1}$
	\end{algorithmic}
\end{algorithm}

One can verify that the gen-GKB derived here is equivalent to the iterative process used in \cite{Chung2017} with the same name. However, here we adopt a new point of view, i.e. treating $\bA$ as a linear operator between two Hilbert spaces $(\mathbb{R}^{n}, \langle\cdot,\cdot\rangle_{\bN^{-1}})$ and $(\mathbb{R}^{m}, \langle\cdot,\cdot\rangle_{\bM^{-1}})$ and applying GKB to $\bA$ between these two spaces to get gen-GKB. This view is particularly intuitive for constructing valid solution subspaces in SPR methods for the Bayesian inverse problems. Based on this new view, we gain insight into the structures of the two subspaces $\mathrm{span}\{\bu_i\}_{i=1}^{k}$ and $\mathrm{span}\{\bv_i\}_{i=1}^{k}$.

\begin{proposition}\label{prop:Krylov}
	The group of vectors $\{\bu_i\}_{i=1}^{k}$ is an $\bM^{-1}$-orthonormal basis of the Krylov subspace 
	\begin{equation*}
		\mathcal{K}_k(\bA\bN\bA^{\top}\bM^{-1}, b) = \mathrm{span}\{(\bA\bN\bA^{\top}\bM^{-1})^ib\}_{i=0}^{k-1},
	\end{equation*}
	and $\{\bv_i\}_{i=1}^{k}$ is an $\bN^{-1}$-orthonormal basis of the Krylov subspace 
	\begin{equation*}
		\mathcal{K}_k(\bN\bA^{\top}\bM^{-1}\bA, \bN\bA^{\top}\bM^{-1}\bb) = \mathrm{span}\{(\bN\bA^{\top}\bM^{-1}\bA)^i\bN\bA^{\top}\bM^{-1}\bb\}_{i=0}^{k-1}.
	\end{equation*}
\end{proposition}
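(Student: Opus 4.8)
The plan is to recognize that the recursions \eqref{GKB1}--\eqref{GKB3} are exactly the Golub--Kahan bidiagonalization of the operator $\bA$ acting between the two Hilbert spaces of Definition~\ref{def:inner_prod}, whose adjoint is $\bA^{*}=\bN\bA^{\top}\bM^{-1}$ by Lemma~\ref{lem:adj_matform}. Since $\alpha_i,\beta_i$ are chosen to normalize $\bv_i$ in the $\bN^{-1}$-norm and $\bu_i$ in the $\bM^{-1}$-norm, the usual three-term structure of GKB guarantees that $\{\bu_i\}_{i=1}^{k}$ is $\bM^{-1}$-orthonormal and $\{\bv_i\}_{i=1}^{k}$ is $\bN^{-1}$-orthonormal; I would either invoke this directly or confirm it by a short induction on the recurrences. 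The substantive part is the identification of the two spanned subspaces with the stated Krylov subspaces, which I would carry out by a coupled induction on the index.

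Writing $\bar{\mathcal{U}}_k=\mathcal{K}_k(\bA\bA^{*},\bb)$ and $\bar{\mathcal{V}}_k=\mathcal{K}_k(\bA^{*}\bA,\bA^{*}\bb)$, I note that $\bA\bA^{*}=\bA\bN\bA^{\top}\bM^{-1}$, $\bA^{*}\bA=\bN\bA^{\top}\bM^{-1}\bA$ and $\bA^{*}\bb=\bN\bA^{\top}\bM^{-1}\bb$, so these are precisely the subspaces in the statement. The algebraic fact I would record first is the pair of shift identities $\bA^{*}(\bA\bA^{*})^{j}=(\bA^{*}\bA)^{j}\bA^{*}$ and $\bA(\bA^{*}\bA)^{j}=(\bA\bA^{*})^{j}\bA$, immediate from associativity of operator products. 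They give $\bA^{*}\bar{\mathcal{U}}_i=\bar{\mathcal{V}}_i$ and $\bA\bar{\mathcal{V}}_i=\mathrm{span}\{(\bA\bA^{*})^{j}\bb\}_{j=1}^{i}\subseteq\bar{\mathcal{U}}_{i+1}$.

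The induction then runs along the interleaved generation order $\bu_1,\bv_1,\bu_2,\bv_2,\dots$. For the base case, \eqref{GKB1} gives $\bu_1\in\mathrm{span}\{\bb\}=\bar{\mathcal{U}}_1$, and \eqref{GKB2} with $\bv_0=\boldsymbol{0}$ gives $\alpha_1\bv_1=\bA^{*}\bu_1\in\bA^{*}\bar{\mathcal{U}}_1=\bar{\mathcal{V}}_1$. For the inductive step, assuming $\bu_i\in\bar{\mathcal{U}}_i$ and $\bv_{i-1}\in\bar{\mathcal{V}}_{i-1}$, relation \eqref{GKB2} writes $\bv_i$ as a combination of $\bA^{*}\bu_i\in\bar{\mathcal{V}}_i$ and $\bv_{i-1}\in\bar{\mathcal{V}}_{i-1}\subseteq\bar{\mathcal{V}}_i$, so $\bv_i\in\bar{\mathcal{V}}_i$; then, using $\bv_i\in\bar{\mathcal{V}}_i$, relation \eqref{GKB3} writes $\bu_{i+1}$ as a combination of $\bA\bv_i\in\bar{\mathcal{U}}_{i+1}$ and $\bu_i\in\bar{\mathcal{U}}_i\subseteq\bar{\mathcal{U}}_{i+1}$, so $\bu_{i+1}\in\bar{\mathcal{U}}_{i+1}$. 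This yields the inclusions $\mathrm{span}\{\bu_i\}_{i=1}^{k}\subseteq\bar{\mathcal{U}}_k$ and $\mathrm{span}\{\bv_i\}_{i=1}^{k}\subseteq\bar{\mathcal{V}}_k$.

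To upgrade the inclusions to equalities I would count dimensions: the orthonormality established above makes each of $\{\bu_i\}_{i=1}^{k}$ and $\{\bv_i\}_{i=1}^{k}$ linearly independent, so the two spanned subspaces have dimension exactly $k$, while $\bar{\mathcal{U}}_k$ and $\bar{\mathcal{V}}_k$ have dimension at most $k$; equality follows. The one point needing care, and the main obstacle, is the bookkeeping of the interleaved coupled induction together with the implicit assumption that the process does not break down within $k$ steps (all $\alpha_i,\beta_i>0$), since this is exactly what forces the Krylov subspaces to attain full dimension $k$ and makes the dimension count tight.
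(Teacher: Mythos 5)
Your proposal is correct, and it is more self-contained than the paper's own proof. The paper disposes of this proposition in two sentences: it invokes, as a known black-box property, that GKB applied to a compact operator between Hilbert spaces produces orthonormal bases of $\mathcal{K}_k(\bA\bA^{*},\bb)$ and $\mathcal{K}_k(\bA^{*}\bA,\bA^{*}\bb)$ (citing the literature on GKB for operators), and then performs exactly the same identifications you do via Lemma~\ref{lem:adj_matform}: $\bA\bA^{*}=\bA\bN\bA^{\top}\bM^{-1}$, $\bA^{*}\bA=\bN\bA^{\top}\bM^{-1}\bA$, $\bA^{*}\bb=\bN\bA^{\top}\bM^{-1}\bb$. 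What you do differently is to prove the black box rather than cite it: the shift identities $\bA^{*}(\bA\bA^{*})^{j}=(\bA^{*}\bA)^{j}\bA^{*}$ and $\bA(\bA^{*}\bA)^{j}=(\bA\bA^{*})^{j}\bA$, the interleaved induction giving the inclusions $\mathrm{span}\{\bu_i\}_{i=1}^{k}\subseteq\mathcal{K}_k(\bA\bA^{*},\bb)$ and $\mathrm{span}\{\bv_i\}_{i=1}^{k}\subseteq\mathcal{K}_k(\bA^{*}\bA,\bA^{*}\bb)$, and the dimension count that upgrades inclusion to equality. Your route buys two things the paper's does not: it keeps the argument elementary (no appeal to the operator-GKB literature), and it surfaces explicitly the non-breakdown hypothesis $\alpha_i,\beta_i\neq 0$ for $i\le k$, which the paper leaves implicit here and only states later when asserting that $\bB_k$ has full column rank. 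The paper's route buys brevity. The one soft spot in your sketch is the orthonormality itself, which you propose to ``invoke or confirm by a short induction'': that induction is the standard but not entirely trivial Lanczos-type argument (orthogonality against the two most recent vectors is by construction; against earlier ones it requires the coupled recurrences), so if you want full self-containedness you should write it out rather than wave at it --- otherwise you are invoking the same black box the paper does, just for a smaller claim.
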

\begin{proof}
	We exploit the property of GKB for the linear operator $\bA$ with starting vector $b$ between the two Hilbert spaces $(\mathbb{R}^{n}, \langle\cdot,\cdot\rangle_{\bN^{-1}})$ and $(\mathbb{R}^{m}, \langle\cdot,\cdot\rangle_{\bM^{-1}})$, which states that $\{\bu_i\}_{i=1}^{k}$ and $\{\bv_i\}_{i=1}^{k}$ are the $\bM^{-1}$-orthonormal basis and $\bN^{-1}$-orthonormal basis of the Krylov subspaces $\mathcal{K}_k(\bA\bA^{*}, \bb)$ and $\mathcal{K}_k(\bA^{*}\bA, \bA^{*}\bb)$, respectively. Using Lemma \ref{lem:adj_matform}, we have $\bA\bA^{*}=\bA\bN\bA^{\top}\bM^{-1}$, $\bA^{*}\bA=\bN\bA^{\top}\bM^{-1}\bA$ and $\bA^{*}\bb=\bN\bA^{\top}\bM^{-1}\bb$. The proof is completed. \qed
\end{proof}

Define two matrices $\bU_k=(\bu_1,\dots,\bu_k)$ and $\bV_{k}=(\bv_1,\dots,\bv_k)$. Then $\bU_k$ and $\bV_k$ are two $\bM^{-1}$ and $\bN^{-1}$ orthonormal matrices, respectively. By \eqref{GKB1}--\eqref{GKB3}, we can rewrite gen-GKB with the matrix-form recursive relations:
\begin{align}
	& \beta_1\bU_{k+1}e_{1} = \bb, \label{GKB13} \\
	& \bA\bV_k = \bU_{k+1}\bB_k, \label{GKB23} \\
	& \bN\bA^{\top}\bM^{-1}\bU_{k+1} = \bV_k\bB_{k}^{\top}+\alpha_{k+1}\bv_{k+1}e_{k+1}^{\top}, \label{GKB33}
\end{align}
where $\be_1$ and $\be_{k+1}$ are the first and $(k+1)$-th columns of the identity matrix of order $k+1$, respectively, and 
\begin{equation}
	\bB_{k}=\begin{pmatrix}
		\alpha_{1} & & & \\
		\beta_{2} &\alpha_{2} & & \\
		&\beta_{3} &\ddots & \\
		& &\ddots &\alpha_{k} \\
		& & &\beta_{k+1}
	\end{pmatrix}\in  \mathbb{R}^{(k+1)\times k}
\end{equation}
is the projection of $\bA$ onto subspaces $\mathrm{span}\{\bU_{k+1}\}$ and $\mathrm{span}\{\bV_k\}$. If gen-GKB does not terminate in $k$ steps, i.e. $\alpha_i, \beta_i\neq0$ for $1\leq i \leq k+1$, then $B_k$ has full column rank. Based on the above matrix-form relations, we can design the iterative SPR algorithm based on gen-GKB.

%------------------------------------------------------------------------
\subsection{Iterative regularization by subspace projection}
At the $k$-th iteration of gen-GKB, we seek solution to \eqref{spr} by setting $\mathcal{S}_k=\mathrm{span}\{\bV_k\}$. Denote any vector $\bx\in \mathcal{S}_k$ as $\bx=\bV_{k}\by$ with $\by\in\mathbb{R}^{k}$. By \eqref{GKB23}, we have
\begin{align*}
	\min_{x\in\mathcal{S}_k}\|\bA\bx-\bb\|_{\bM^{-1}}
	&= \min_{y\in\mathbb{R}^{k}}\|\bA\bV_ky-\bU_{k+1}\beta_1\be_{1}\|_{\bM^{-1}} \\
	&= \min_{y\in\mathbb{R}^{k}}\|\bU_{k+1}(\bB_k\by-\bU_{k+1}\beta_1e_{1})\|_{\bM^{-1}} \\
	&= \min_{y\in\mathbb{R}^{k}}\|\bB_k\by-\beta_1\be_{1}\|_2,
\end{align*}
where we use the property that $\bU_{k+1}$ is $\bM^{-1}$-orthonormal. Under the assumption that gen-GKB does not terminate, 
the above $k$-dimensional linear least squares problem has a unique solution. Therefore, the solution to \eqref{spr} is
\begin{equation}\label{k_dim_ls}
	\bx_k = \bV_k\by_k, \ \ \ \by_k=\argmin_{y\in\mathbb{R}^{k}}\|\bB_k\by-\beta_1e_{1}\|_2
\end{equation}
for $k=1,2\dots$. Furthermore, we do not need to explicitly compute $\by_k$ by solving \eqref{k_dim_ls} for each $k$. Instead, there is an efficient procedure to update $\bx_k$ from $\bx_0=\mathbf{0}$ step by step. The procedure is similar to the LSQR algorithm for standard linear least squares problems \cite{Paige1982}.

First, perform the following QR factorization: 
\[\widetilde{\bQ}_k\begin{pmatrix}
	\bB_k & \beta_{1}\be_{1}
\end{pmatrix}
=\begin{pmatrix}
	\bR_k & \bg_k \\
	& \bar{\phi}_{k+1}
\end{pmatrix}=
\left(\begin{array}{ccccc:c}		
	\rho_1& \theta_2 &  &  &  & \phi_1 \\		
	& \rho_2 & \theta_3 & & & \phi_2 \\		
	& & \ddots & \ddots & & \vdots \\ 		
	& & & \rho_{k-1} & \theta_k & \phi_{k-1} \\		
	& & &  & \rho_k & \phi_k \\	\hdashline	
	& & &  &  & \bar{\phi}_{k+1}	
\end{array}\right) ,\]
where the orthogonal matrix $\widetilde{\bQ}_k$ is the product of a series of Givens rotation matrices that is used to zero out $\beta_{i+1}$:
\[\begin{pmatrix}
	c_i & s_i \\
	s_i & -c_i
\end{pmatrix}
\begin{pmatrix}
	\bar{\rho}_i & 0 & \bar{\phi}_i  \\
	\beta_{i+1} & \alpha_{i+1} & 0
\end{pmatrix}=
\begin{pmatrix}
	\rho_i & \theta_{i+1} & \phi_i \\
	0 & \bar{\rho}_{i+1}  & \bar{\phi}_{i+1}
\end{pmatrix} , \]
where $i=1,\dots,k$. There exist efficient recursive formulae to compute these matrix elements, and here we adopt these formulae proposed in \cite{Paige1982} (line 3--6 in Algorithm \ref{up_alg}). Using the orthogonality of $\widetilde{\bQ}_k$ leads to
\begin{equation}\label{res1}
	\|\bB_k\by-\beta_1e_{1}\|_{2}^2=\left\|\widetilde{\bQ}_k\begin{pmatrix}
		\bB_k & \beta_{1}\be_{1}
	\end{pmatrix}\begin{pmatrix}
		\by \\ -1
	\end{pmatrix}\right\|_{2}^2=\|\bR_k\by-\bg_k\|_{2}^2+|\bar{\phi}_{k+1}|^2 .
\end{equation}
Thereby the solution to $\argmin_{\by\in\mathbb{R}^k}\|\bB_k\by-\beta_{1}\be_{1}\|_2$ is $\by_k=\bR_{k}^{-1}\bg_k$. Factorizing $\bR_k$ as
\[\bR_k=\bD_k\bar{\bR}_k, \ \ 
\bD_k=\begin{pmatrix}
	\rho_1 & & & \\
	& \rho_2 & & \\
	& & \ddots & \\
	& & & \rho_k
\end{pmatrix}, \ \
\bar{\bR}_k=\begin{pmatrix}
	1 & \theta_2/\rho_1 & & \\
	& 1 & \theta_3/\rho_2 & \\
	& & \ddots & \theta_k/\rho_{k-1} \\
	& & & 1
\end{pmatrix},\]
we obtain
\begin{equation}\label{up1_x}
	\bx_k=\bV_k\by_k=\bV_k\bR_{k}^{-1}\bg_k=(\bV_k\bar{\bR}_{k}^{-1})(\bD_{k}^{-1}\bg_k).
\end{equation}
Let $\bW_k=\bV_k\bar{\bR}_{k}^{-1}=(\bw_1,\dots,\bw_k)$. Note that $\bD_{k}^{-1}\bg_k=(\phi_{1}/\rho_{1},\dots,\phi_{k}/\rho_{k})^{\top}$. Solving $\bW_k\bar{\bR}_k=\bV_k$ by back substitution and combining \eqref{up1_x} leads to the recursive formula for $\bx_i$ and $\bw_i$: 
\begin{equation}\label{up2_x}
	\bx_{i}=\bx_{i-1}+(\phi_{i}/\rho_{i})\bw_{i}, \ \ \  
	\bw_{i+1}=\bv_{i+1}-(\theta_{i+1}/\rho_{i})\bw_{i}.
\end{equation}

From the above derivation, we have 
\begin{equation}\label{res_norm}
	\bar{\phi}_{k+1}=\|\bB_k\by_{k}-\beta_1\be_{1}\|_2 = \|\bA\bx_k-\bb\|_{\bM^{-1}}.
\end{equation}
Thereby the residual norm can be updated very quickly without explicitly computing $\|\bA\bx_k-\bb\|_{\bM^{-1}}$. In order to compute $\|\bx_k\|_{\bN^{-1}}$ without explicitly using $\bN^{-1}$, let $\bar{\bx}_k=\bN^{-1}\bx_k$ and $\bar{\bw}_k=\bN^{-1}\bw_k$. By \eqref{up2_x} we have
\begin{equation}\label{up3_x}
	\bar{\bx}_{i}=\bar{\bx}_{i-1}+(\phi_{i}/\rho_{i})\bar{\bw}_{i}, \ \ \  
	\bar{\bw}_{i+1}=\bar{\bv}_{i+1}-(\theta_{i+1}/\rho_{i})\bar{\bw}_{i},
\end{equation}
and $\|\bx_k\|_{\bN^{-1}}=(\bx_{k}^{\top}\bar{\bx}_k)^{1/2}$. The above updating procedure is summarized in Algorithm \ref{up_alg}.

\begin{algorithm}[htb]
	\caption{Procedure for updating iterative solutions and residual/solution norms}\label{up_alg}
	\begin{algorithmic}[1]
		\State {Let $\bx_{0}=\mathbf{0},\ \bar{\bx}_0=\mathbf{0}, \ \bw_{1}=\bv_{1}, \ \bar{\bw}_{1}=\bar{\bv}_1, \ \bar{\phi}_{1}=\beta_{1},\ \bar{\rho}_{1}=\alpha_{1}$}
		\For{$i=1,2,\dots,k$}
		\State $\rho_{i}=(\bar{\rho}_{i}^{2}+\beta_{i+1}^{2})^{1/2}$
		\State $c_{i}=\bar{\rho}_{i}/\rho_{i},\ s_{i}=\beta_{i+1}/\rho_{i}$
		\State$\theta_{i+1}=s_{i}\alpha_{i+1},\ \bar{\rho}_{i+1}=-c_{i}\alpha_{i+1}$
		\State $\phi_{i}=c_{i}\bar{\phi}_{i},\ \bar{\phi}_{i+1}=s_{i}\bar{\phi}_{i} $
		\State $\bx_{i}=\bx_{i-1}+(\phi_{i}/\rho_{i})\bw_{i} $
		\State $\bw_{i+1}=\bv_{i+1}-(\theta_{i+1}/\rho_{i})\bw_{i}$
		\State $\bar{\bx}_{i}=\bar{\bx}_{i-1}+(\phi_{i}/\rho_{i})\bar{\bw}_{i}$  \Comment{$\bar{\bx}_{i}=\bN^{-1}\bx_{i}$}
		\State $\bar{\bw}_{i+1}=\bar{\bv}_{i+1}-(\theta_{i+1}/\rho_{i})\bar{\bw}_{i}$  \Comment{$\bar{\bw}_{i+1}=\bN^{-1}\bw_{i+1}$}
		\State $\|\bx_k\|_{\bN^{-1}}=(\bx_{k}^{\top}\bar{\bx}_k)^{1/2}$
		\EndFor
	\end{algorithmic}
\end{algorithm}

%---------------------------------------------------
\subsection{Early stopping rules}
The SPR method usually exhibits semi-convergence behavior: as the iteration proceeds, the iterative solution first gradually approximate to $\bx_{\text{true}}$, then the solution will deviate far from $\bx_{\text{true}}$ and eventually converges to $\bx_{\text{naive}}=\bA^{\dag}\bb$ \cite[\S 3.3]{Engl2000}. This is because the solution subspace will contain more and more noisy components as it gradually expands. The iteration at which the corresponding solution has the smallest error is called the semi-convergence point. In order to obtain a good regularized solution, we should stop the iteration early. Note that the iteration number $k$ in SPR plays a similar role as the regularization parameter in Tikhonov regularization. Here we adapt several criteria for choosing regularization parameters to estimate the semi-convergence point.

\paragraph*{Discrepancy principle.} \ 
For the case that $\bepsilon$ is a Gaussian white noise, if $\|\bepsilon\|_2$ is known, one criterion for determining the early stopping iteration is the discrepancy principle (DP), which states that the discrepancy between the data and predicted output $\|\bA\bx_k-\bb\|_2$, should be of the order of $\|\bepsilon\|_2$ \cite[\S 7.2]{Hansen1998}. If $\bepsilon$ is a non-white Gaussian noise, noticing that \eqref{inverse1} leads to
\begin{equation}\label{trans}
	\bL_{M}\bb = \bL_{M}\bA\bx + \bL_{M}\bepsilon,
\end{equation}
and $\bL_{M}\bepsilon\sim\mathcal{N}(\mathbf{0}, \bI)$, thereby this transformation whitens the noise, and the discrepancy is $\|\bL_{M}\bA\bx_k-\bL_{M}\bb\|_2=\|\bA\bx_k-\bb\|_{\bM^{-1}}$. Since $\bar{\bepsilon}=\bL_{M}\bepsilon$ is a Gaussian white noise with zero mean, it follows that
\begin{equation*}
	\mathbb{E}\left[\|\bar{\bepsilon}\|_{2}^{2}\right] = \mathbb{E}\left[\mathrm{trace}\left(\bar{\bepsilon}^{\top}\bar{\bepsilon}\right)\right]
	= \mathbb{E}\left[\mathrm{trace}\left(\bar{\bepsilon}\bar{\bepsilon}^{\top}\right)\right]
	= \mathrm{trace}\left(\mathbb{E}\left[\bar{\bepsilon}\bar{\bepsilon}^{\top}\right]\right)
	= \mathrm{trace}\left(\bI\right) = m.
\end{equation*}
Note from \eqref{res_norm} that $\|\bA\bx_k-\bb\|_{\bM^{-1}}=\bar{\phi}_{k+1}$ decreases monotonically since $\bx_k$ minimizes $\|\bA\bx-\bb\|_{\bM^{-1}}$ in the gradually expanding subspace $\mathrm{span}\{\bV_k\}$. Following DP, we should stop iteration at the first $k$ satisfying 
\begin{equation}\label{discrepancy}
	\bar{\phi}_{k+1} \leq \tau\sqrt{m}
\end{equation}
with $\tau>1$ slightly, such as $\tau=1.01$. Typically, the early stopping iteration determined by DP is slightly smaller than the semi-convergence point, thereby the corresponding solution is over-regularized.

\paragraph*{L-curve.} \
Another early stopping rule is the L-curve criterion, which is a heuristic approach that does not need the noise norm \cite{Hansen1992}. The idea is to plot in log-log scale the following curve
\begin{equation}
	\left(\log\|\bA\bx_k-\bb\|_{\bM^{-1}},\log\|\bx_k\|_{\bN^{-1}}\right) 
	= \left(\log\bar{\phi}_{k+1},\log(\bx_{k}^{\top}\bar{\bx}_k)^{1/2}\right),
\end{equation}
which frequently has a characteristic `L' shape, and the corner corresponds to the point where further reduction in the residual comes only at the expense of a drastic increase in the regularization term. Thus, we should choose the early stopping iteration that corresponds to this corner, which is usually defined as the point of maximum curvature of the L-curve in a log-log plot.

\paragraph*{Generalized cross-validation.} \
The generalized cross-validation (GCV) is a statistical approach for estimating the optimal regularization parameter, especially for the Gaussian white noise case \cite{Golub1979}. For the SPR method where the iteration number $k$ plays the role of the regularization parameter, this approach can be adapted to estimate the optimal iteration number $k_0$. Since the transformed system \eqref{trans} equivalent to \eqref{inverse1} has a white Gaussian noise $\bL_{M}\bepsilon$, the GCV function with respect to $k$ should be
\begin{equation*}
	\mathrm{GCV}(k) = \frac{\|\bL_{M}\bA\bx_k-\bL_{M}\bb\|_{2}^{2}}{(\mathrm{trace}(\bI-\bL_{M}\bA\bA_{k}^{\dag}))^2} = \frac{\|\bA\bx_k-\bb\|_{\bM^{-1}}^{2}}{(\mathrm{trace}(\bI-\bL_{M}\bA\bA_{k}^{\dag}))^2},
\end{equation*} 
where $\bA_{k}^{\dag}$ denotes the matrix that maps the right-hand side $\bL_{M}b$ to the $k$-th regularized solution $\bx_k$. By \eqref{k_dim_ls} and \eqref{GKB13} we have
\begin{align*}
	\bx_k = \bV_{k}\bB_{k}^{\dag}\beta_{1}\be_{1}=\bV_{k}\bB_{k}^{\dag}\bU_{k+1}^{\top}\bM^{-1}\bb
	=(\bV_{k}\bB_{k}^{\dag}\bU_{k+1}^{\top}\bM^{-1}\bL_{M}^{-1})\bL_{M}b,
\end{align*}
which implies that $\bA_{k}^{\dag}=\bV_{k}\bB_{k}^{\dag}\bU_{k+1}^{\top}\bM^{-1}\bL_{M}^{-1}$. Using \eqref{GKB23} again we get 
\begin{align*}
	\mathrm{trace}(\bL_{M}\bA\bA_{k}^{\dag}) 
	&= \mathrm{trace}(\bL_{M}\bA\bV_{k}\bB_{k}^{\dag}\bU_{k+1}^{\top}\bM^{-1}\bL_{M}^{-1})
	= \mathrm{trace}(\bU_{k+1}\bB_{k}\bB_{k}^{\dag}\bU_{k+1}^{\top}\bM^{-1}) \\
	&= \mathrm{trace}(\bB_{k}\bB_{k}^{\dag}\bU_{k+1}^{\top}\bM^{-1}\bU_{k+1}) \\
	&= \mathrm{trace}(\bB_{k}\bB_{k}^{\dag})
	= k.
\end{align*}
Therefore, the final expression of the GCV function is
\begin{equation}
	\mathrm{GCV}(k) = \frac{\bar{\phi}_{k+1}^2}{(m-k)^2},
\end{equation}
and the minimizer of this function is used as a good estimate of $k_0$.

We remark that in using the GCV or L-curve method, one must go a few iterations beyond the optimal $k$ in order to verify the optimum. The gen-GKB based SPR iterative algorithm for \eqref{inverse1} is summarized in Algorithm \ref{alg3}.

\begin{algorithm}[htb]
	\caption{Subspace projection regularization by gen-GKB (genGKB\_SPR)}\label{alg3}
	\begin{algorithmic}[1]
		\Require $\bA\in\mathbb{R}^{m\times n}$, $\bb\in\mathbb{R}^{m}$, $\bM\in\mathbb{R}^{m\times m}$, $\bN\in\mathbb{R}^{n\times n}$
		\State Initialize $\bx_{0}=\mathbf{0}$, $\bar{\bx}_{0}=\mathbf{0}$, 
		\For{$k=1,2,\ldots,$}
		\State Compute $\bv_k$, $\bar{\bv}_k$, $\alpha_k$, $\beta_k$ by \texttt{gen-GKB} 
		\State Compute $\bx_k$, $\bar{\bx}_k$, $\bar{\phi}_{k+1}$, $\|\bx_k\|_{\bN^{-1}}$ by Algorithm \ref{up_alg} 
		\If{Early stopping criterion is satisfied}  
		\State Denote the estimated optimal iteration by $k_1$
		\State Terminate the iteration at $k_1$ 
		\EndIf
		\EndFor
		\Ensure Final regularized solution $\bx_{k_1}$  
	\end{algorithmic}
\end{algorithm}

%%--------------------------------------------------------------------
\section{Regularization properties of genGKB\_SPR}\label{sec4}
We establish several theoretical results about the genGKB\_SPR algorithm to reveal its regularization properties for solving the Bayesian linear inverse problem. Based on the GSVD of $\{\bL_{M}\bA, \bL_{N}\}$ appeared from the transformed standard 2-norm Tikhonov regularization \eqref{gen_regu}, we first give an explicit expression of the solution subspaces $\mathrm{span}\{\bV_k\}$ and build connections with $\mathrm{span}\{\bZ_k\}$, which are the most ideal solution subspaces for the SPR method.

\begin{lemma}\label{Struct_Vk}
	Following the notations in \eqref{gsvd} and \eqref{gsvd1}, the $k$-th solution subspace of genGKB\_SPR is
	\begin{equation}
		\mathrm{span}\{\bV_k\} = \mathrm{span}\{\bZ_r\bD_{A}^{2i+1}\bU_{A,r}^{\top}\bL_{M}\bb\}_{i=0}^{k-1},
	\end{equation}
	where $\bZ_r$ and $\bU_{A,r}$ are the first columns of $\bZ$ and $\bU_{A}$, respectively.
\end{lemma}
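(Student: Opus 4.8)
The plan is to invoke Proposition \ref{prop:Krylov}, which already identifies $\mathrm{span}\{\bV_k\}$ with the Krylov subspace $\mathcal{K}_k(\bN\bA^{\top}\bM^{-1}\bA, \bN\bA^{\top}\bM^{-1}\bb)$, and then to rewrite both the iteration matrix $\bN\bA^{\top}\bM^{-1}\bA$ and the starting vector $\bN\bA^{\top}\bM^{-1}\bb$ in closed form using the GSVD \eqref{gsvd}--\eqref{gen_eig}. The crucial algebraic fact I would extract first is that, under the $\bN^{-1}$-orthonormal normalization (so that $\bSigma_L=\bI$), the relation $\bZ^{\top}\bN^{-1}\bZ=\bI$ from \eqref{gen_eig} gives $\bN^{-1}=\bZ^{-\top}\bZ^{-1}$, hence $\bN=\bZ\bZ^{\top}$ and in particular $\bN\bZ^{-\top}=\bZ$. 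This single identity is what converts the awkward product $\bN\bA^{\top}\bM^{-1}(\cdot)$ into a clean similarity transform by $\bZ$.

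Next I would substitute $\bM^{-1}=\bL_{M}^{\top}\bL_{M}$ together with $\bL_{M}\bA=\bU_A\bSigma_A\bZ^{-1}$ to obtain $\bA^{\top}\bM^{-1}\bA=(\bL_{M}\bA)^{\top}(\bL_{M}\bA)=\bZ^{-\top}\bSigma_A^{\top}\bSigma_A\bZ^{-1}$, using orthogonality of $\bU_A$. Multiplying on the left by $\bN$ and applying $\bN\bZ^{-\top}=\bZ$ yields the diagonalization
\begin{equation*}
	\bN\bA^{\top}\bM^{-1}\bA = \bZ\,(\bSigma_A^{\top}\bSigma_A)\,\bZ^{-1},
\end{equation*}
so that its $i$-th power is $\bZ(\bSigma_A^{\top}\bSigma_A)^{i}\bZ^{-1}$. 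The same substitutions give the starting vector as $\bN\bA^{\top}\bM^{-1}\bb=\bZ\bSigma_A^{\top}\bU_A^{\top}\bL_{M}\bb$. The block form of $\bSigma_A$ in \eqref{gsvd1} then does all the work: since only the leading $r\times r$ block $\bD_{A}$ of $\bSigma_A^{\top}$ (respectively $\bD_A^{2}$ of $\bSigma_A^{\top}\bSigma_A$) is nonzero, applying $\bZ$ to a vector supported on its first $r$ coordinates collapses $\bZ$ to its first $r$ columns $\bZ_r$, and $\bU_A^{\top}$ collapses to $\bU_{A,r}^{\top}$.

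Carrying out the bookkeeping, the starting vector becomes $\bZ_r\bD_{A}\bU_{A,r}^{\top}\bL_{M}\bb$, and applying the $i$-th power of the iteration matrix contributes a factor $\bD_{A}^{2i}$ in the leading block, so the $i$-th Krylov vector is exactly
\begin{equation*}
	(\bN\bA^{\top}\bM^{-1}\bA)^{i}\,\bN\bA^{\top}\bM^{-1}\bb = \bZ_r\bD_{A}^{2i+1}\bU_{A,r}^{\top}\bL_{M}\bb ,
\end{equation*}
which matches the claimed spanning set term-by-term for $i=0,\dots,k-1$; since Proposition \ref{prop:Krylov} equates $\mathrm{span}\{\bV_k\}$ with the span of these $k$ vectors, the two subspaces coincide. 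I do not expect a genuine obstacle here, as the argument is a direct GSVD computation; the one point requiring care is the block accounting, namely tracking how the rectangular, rank-deficient $\bSigma_A$ forces the truncations $\bZ\to\bZ_r$ and $\bU_A\to\bU_{A,r}$, and verifying that the odd exponent $2i+1$ arises as one factor of $\bD_{A}$ carried by the starting vector plus $2i$ from the $i$ applications of $\bSigma_A^{\top}\bSigma_A$.
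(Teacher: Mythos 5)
Your proposal is correct and follows essentially the same route as the paper's own proof: invoke Proposition \ref{prop:Krylov}, use the GSVD relations together with $\bN\bZ^{-\top}=\bZ$ (from $\bZ^{\top}\bN^{-1}\bZ=\bI$) to diagonalize $\bN\bA^{\top}\bM^{-1}\bA = \bZ\bSigma_A^{\top}\bSigma_A\bZ^{-1}$ and rewrite the starting vector as $\bZ\bSigma_A^{\top}\bU_A^{\top}\bL_M\bb$, then use the block structure of $\bSigma_A$ to collapse to $\bZ_r\bD_A^{2i+1}\bU_{A,r}^{\top}\bL_M\bb$. The only cosmetic difference is that you factor $\bM^{-1}=\bL_M^{\top}\bL_M$ up front, whereas the paper cancels $\bL_M^{-\top}\bM^{-1}\bL_M^{-1}=\bI$ inside the expanded product; these are the same computation.
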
 
\begin{proof}
	By Proposition \ref{prop:Krylov} we have $\mathrm{span}\{\bV_k\}=\mathcal{K}_k(\bN\bA^{\top}\bM^{-1}\bA, \bN\bA^{\top}\bM^{-1}\bb)$. Using the GSVD of $\{\bL_{M}\bA, \bL_{N}\}$, we have
	\begin{align*}
		\bN\bA^{\top}\bM^{-1}\bA
		= \bN\bZ^{-\top}\bSigma_{A}^{\top}\bU_{A}^{\top}\bL_{M}^{-\top}\bM^{-1}\bL_{M}^{-1}\bU_{A}\bSigma_{A}\bZ^{-1} 
		=  \bZ\bSigma_{A}^{\top}\bSigma_{A}\bZ^{-1},
	\end{align*}
	where we use $\bN\bZ^{-\top}=\bZ$ since $\bZ^{\top}\bN^{-1}\bZ=\bI$. Similarly,
	\begin{equation*}
		\bN\bA^{\top}\bM^{-1}\bb = \bN\bZ^{-\top}\bSigma_{A}^{\top}\bU_{A}^{\top}\bL_{M}^{-\top}\bM^{-1}\bb
		= \bZ\bSigma_{A}^{\top}\bU_{A}^{\top}\bL_{M}\bb .
	\end{equation*}
	Therefore, for $i=0,1,\dots,k-1$, it holds
	\begin{align*}
		(\bN\bA^{\top}\bM^{-1}\bA)^{i}\bN\bA^{\top}\bM^{-1}\bb
		&= (\bZ\bSigma_{A}^{\top}\bSigma_{A}\bZ^{-1})^{i}\bZ\bSigma_{A}^{\top}\bU_{A}^{\top}\bL_{M}\bb \\
		&= \bZ(\bSigma_{A}^{\top}\bSigma_{A})^{i}\bSigma_{A}^{\top}\bU_{A}^{\top}\bL_{M}\bb \\
		&= \bZ_r\bD_{A}^{2i+1}\bU_{A,r}^{\top}\bL_{M}\bb.
	\end{align*}
	The desired result immediately follows. \qed
\end{proof}

To investigate how far the $k$-th solution subspace is from the ideal one $\mathrm{span}\{\bZ_k\}$, we use the following definition for measuring the distance between two subspaces under a non-standard inner product; see \cite[Chapter 4, \S 2.1]{Stewart2001matrix} and \cite[\S 2.2.5]{bjorck2015numerical} for the corresponding definition under the standard inner product.

\begin{definition}\label{car_angle}
	Let $\mathcal{F}$ and $\mathcal{G}$ be two $k$-dimensional subspaces of $(\mathbb{R}^{n}, \langle\cdot,\cdot\rangle_{\bN^{-1}})$. Define the maximum canonical angle between $\mathcal{F}$ and $\mathcal{G}$ as
	\begin{equation*}
		\Theta(\mathcal{F}, \mathcal{G}) = \max\limits_{\bu\in\mathcal{F} \atop \bu\neq\mathbf{0}}\min\limits_{\bv\in\mathcal{G} \atop \bv\neq\mathbf{0}}\arccos \frac{|\langle\bu,\bv\rangle_{\bN^{-1}}|}{\|\bu\|_{\bN^{-1}}\|\bv\|_{\bN^{-1}}}.
	\end{equation*}
	The distance between $\mathcal{F}$ and $\mathcal{G}$ is defined as
	\begin{equation*}
		\mathrm{dist}(\mathcal{F}, \mathcal{G}) = \sin\Theta(\mathcal{F}, \mathcal{G}) .
	\end{equation*}
\end{definition}

\begin{lemma}\label{angle_expres}
	Let $(\bF_1, \bF_2)$ and $(\bG_1, \bG_2)$ be two $\bN^{-1}$-orthogonal matrices, both of whose columns span $(\mathbb{R}^{n}, \langle\cdot,\cdot\rangle_{\bN^{-1}})$. If $\mathrm{span}\{\bF_1\}=\mathcal{F}$ and $\mathrm{span}\{\bG_1\}=\mathcal{G}$, then
	\begin{equation}
		\mathrm{dist}(\mathcal{F}, \mathcal{G}) = \|\bF_{2}^{\top}\bN^{-1}\bG_1\|_{2} .
	\end{equation}
\end{lemma}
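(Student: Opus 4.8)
The plan is to reduce the non-standard $\bN^{-1}$-inner product setting to the familiar Euclidean one by exploiting the Cholesky factor $\bL_N$ with $\bN^{-1}=\bL_N^{\top}\bL_N$. The key observation is that $\bL_N$ is an isometric isomorphism from $(\mathbb{R}^{n}, \langle\cdot,\cdot\rangle_{\bN^{-1}})$ onto $(\mathbb{R}^{n}, \langle\cdot,\cdot\rangle_{2})$, since $\langle\bu,\bv\rangle_{\bN^{-1}}=(\bL_N\bu)^{\top}(\bL_N\bv)=\langle\bL_N\bu,\bL_N\bv\rangle_2$ and $\|\bu\|_{\bN^{-1}}=\|\bL_N\bu\|_2$. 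I would first use this to rewrite the maximum canonical angle of Definition \ref{car_angle}: substituting $\tilde{\bu}=\bL_N\bu$ and $\tilde{\bv}=\bL_N\bv$, and noting that $\bL_N$ is a bijection carrying $\mathcal{F},\mathcal{G}$ onto $\bL_N\mathcal{F},\bL_N\mathcal{G}$, the ratio defining the angle is left unchanged. Hence $\Theta(\mathcal{F},\mathcal{G})$ equals the ordinary maximum canonical angle $\Theta_2(\bL_N\mathcal{F},\bL_N\mathcal{G})$ measured in the standard inner product, so that $\mathrm{dist}(\mathcal{F},\mathcal{G})=\sin\Theta_2(\bL_N\mathcal{F},\bL_N\mathcal{G})$.

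Next I would check that the given $\bN^{-1}$-orthogonal matrices become Euclidean-orthogonal after applying $\bL_N$. Setting $\tilde{\bF}=\bL_N\bF$ with $\bF=(\bF_1,\bF_2)$, the $\bN^{-1}$-orthogonality $\bF^{\top}\bN^{-1}\bF=\bI$ becomes $\tilde{\bF}^{\top}\tilde{\bF}=\bF^{\top}\bL_N^{\top}\bL_N\bF=\bF^{\top}\bN^{-1}\bF=\bI$, so $\tilde{\bF}=(\tilde{\bF}_1,\tilde{\bF}_2)$ is orthogonal in the usual sense, with $\mathrm{span}\{\tilde{\bF}_1\}=\bL_N\mathcal{F}$; the same applies to $\tilde{\bG}=\bL_N\bG$ with $\mathrm{span}\{\tilde{\bG}_1\}=\bL_N\mathcal{G}$. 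Now I would invoke the classical characterization of the distance between equidimensional subspaces in the standard inner product (the references cited just before Definition \ref{car_angle}): the sines of the canonical angles between $\bL_N\mathcal{F}$ and $\bL_N\mathcal{G}$ are the singular values of $\tilde{\bF}_2^{\top}\tilde{\bG}_1$, whence $\sin\Theta_2(\bL_N\mathcal{F},\bL_N\mathcal{G})=\|\tilde{\bF}_2^{\top}\tilde{\bG}_1\|_2$.

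Finally I would unwind the transformation: $\tilde{\bF}_2^{\top}\tilde{\bG}_1=(\bL_N\bF_2)^{\top}(\bL_N\bG_1)=\bF_2^{\top}\bL_N^{\top}\bL_N\bG_1=\bF_2^{\top}\bN^{-1}\bG_1$, so combining the three displays gives $\mathrm{dist}(\mathcal{F},\mathcal{G})=\|\bF_2^{\top}\bN^{-1}\bG_1\|_2$, as claimed.

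As for the main obstacle, the computations are routine once the isometry viewpoint is fixed, so the only real content is the invariance of the canonical angles under $\bL_N$ together with the appeal to the standard-inner-product formula. I expect the delicate point to be making the change of variables in the max-min definition rigorous—specifically, arguing that $\bL_N$ maps $\mathcal{F}\setminus\{\mathbf{0}\}$ bijectively onto $(\bL_N\mathcal{F})\setminus\{\mathbf{0}\}$ so that the extremization is genuinely taken over the image subspaces, and stating precisely the Euclidean fact that $\|\tilde{\bF}_2^{\top}\tilde{\bG}_1\|_2$ equals the largest sine. If one prefers to avoid citing the standard result, an alternative is to establish the Euclidean formula inline via the CS decomposition of the orthogonal matrix $\tilde{\bF}^{\top}\tilde{\bG}$, but this lengthens the argument without adding genuine insight.
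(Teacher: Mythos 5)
Your proof is correct and follows essentially the same route as the paper: transform everything through $\bL_N$ into the standard inner product, apply the classical Euclidean formula $\mathrm{dist}=\|\tilde{\bF}_2^{\top}\tilde{\bG}_1\|_2$ for orthonormal bases, and unwind to get $\|\bF_2^{\top}\bN^{-1}\bG_1\|_2$. The only cosmetic difference is that you verify the invariance of canonical angles under $\bL_N$ directly via the change of variables, whereas the paper cites this fact from Knyazev's work (Theorem 4.2 there), so your version is marginally more self-contained but otherwise identical in substance.
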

\begin{proof}
	Several matrix-form expressions of $\Theta(\mathcal{F}, \mathcal{G})$ have been studied in \cite[Section 4]{Knyazev2002}. Here we exploit \cite[Thereom 4.2]{Knyazev2002}, which states that the canonical angles between $\mathcal{F}$ and $\mathcal{G}$ under $\bN^{-1}$-inner product coincide with the canonical angles between $\bL_{N}\mathcal{F}$ and $\bL_{N}\mathcal{G}$ under 2-inner product Let $\bar{\Theta}(\mathcal{F}, \mathcal{G})$ be the maximum canonical angle between $\bL_{N}\mathcal{F}$ and $\bL_{N}\mathcal{G}$ under 2-inner product. By \cite[Theorem 2.2]{Stewart2001matrix} we have
	\begin{align*}
		\mathrm{dist}(\mathcal{F}, \mathcal{G}) =
		\sin\bar{\Theta}(\bL_{N}\mathcal{F}, \bL_{N}\mathcal{G}) = \|(\bL_{N}\bF_2)^{\top}(\bL_{N}\bG_1)\|_2 = \|\bF_{2}^{\top}\bN^{-1}\bG_1\|_{2} .
	\end{align*}
	The first ``=" uses the fact that $(\bL_{N}\bF_1, \bL_{N}\bF_2)$ and $(\bL_{N}\bG_1, \bL_{N}\bG_2)$ are two orthogonal matrices, and $\mathrm{span}\{\bL_{N}\bF_1\}=\bL_{N}\mathcal{F}$ and $\mathrm{span}\{\bL_{N}\bG_1\}=\bL_{N}\mathcal{G}$. \qed
\end{proof}

Now we establish an explicit expression of the distance between $\mathrm{span}\{\bV_k\}$ and $\mathrm{span}\{\bZ_k\}$. To this end, we exploit the following commonly used DPC model for \eqref{gen_regu}:
\begin{equation}\label{dpc}
	|\bu_{A,i}^{\top}\bL_{M}\bb_{\text{true}}| = \gamma_{i}^{1+\beta}, \ \ i=1,\dots,r ,
\end{equation}
where $\beta>0$ controls the decay rate of the Fourier coefficients $\bu_{A,i}^{\top}\bL_{M}\bb_{\text{true}}$ with respect to $\gamma_i$ \cite[\S 4.6]{Hansen2010}. Note that $\bL_{M}\bb=\bL_{M}\bb_{\text{true}}+\bL_{M}\bepsilon$ and $\bL_{M}\bepsilon$ is a Gaussian white noise. Therefore, it follows that $\bu_{A,i}^{\top}\bL_{M}\bb=\bu_{A,i}^{\top}\bL_{M}\bb_{\text{true}}+\bu_{A,i}^{\top}\bL_{M}\bepsilon$, where $\tilde{\bepsilon}=\bU_{A,r}^{\top}\bL_{M}\bepsilon$ is a Gaussian white noise. By DPC,   $\bu_{A,i}^{\top}\bL_{M}\bb_{\text{true}}$ dominates in $\bu_{A,i}^{\top}\bL_{M}\bb$ for small $i$ while $\bu_{A,i}^{\top}\bL_{M}\bepsilon$ dominates for big $i$, thereby it is reasonable to assume that $\bu_{A,i}^{\top}\bL_{M}\bb$ is nonzero for all $1\leq i\leq r$. Based on the above model, we have the follwing result.

\begin{theorem}\label{thm4.1}
	Suppose $\{\gamma_i\}_{i=1}^{r}$ have distinct values. For any $1\leq k \leq r$, let 
	\begin{equation*}
		\bD = \mathrm{diag}(\bD_{A}\bU_{A,r}^{\top}\bL_{M}\bb)
		= \bordermatrix*[()]{%
			\bD_{1}  &  &  k \cr
			&  \bD_{2} & r-k \cr
			k & r-k}, 
    \end{equation*}
    \begin{equation*}         
		\bH_k = 
		\begin{pmatrix}
			1 & \sigma_{1}^{2} & \cdots & \sigma_{1}^{2k-2} \\
			1 & \sigma_{2}^{2} & \cdots & \sigma_{2}^{2k-2} \\
			\vdots & \vdots    & \ddots & \vdots \\
			1 & \sigma_{r}^{2} & \cdots & \sigma_{r}^{2k-2} 
		\end{pmatrix}
		= \bordermatrix*[()]{%
			\bH_{k1} & \ \ k \cr
			\bH_{k2} & \ \ r-k \cr
			k} ,
	\end{equation*}
	and 
	\begin{equation}\label{delta_k}
		\bDelta_k = \bD_{2}\bH_{k2}\bH_{k1}^{-1}\bD_{1}^{-1} \in \mathbb{R}^{(r-k)\times k} .
	\end{equation}
	Then the distance between the $k$-th solution subspace $\mathcal{S}_k=\mathrm{span}\{\bV_k\}$ and $\mathcal{Z}_k=\mathrm{span}\{\bZ_k\}$ is
	\begin{equation}\label{dist_VZ}
		\mathrm{dist}(\mathcal{S}_k, \mathcal{Z}_k) = \frac{\|\bDelta_k\|_2}{\left(1+\|\bDelta_k\|_{2}^{2}\right)^{1/2}}.
	\end{equation}
\end{theorem}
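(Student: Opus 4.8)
The plan is to start from the explicit Krylov description of the solution subspace in Lemma \ref{Struct_Vk} and reduce it to a convenient ``graph'' representation over the $\bN^{-1}$-orthonormal basis $\bZ$, then feed this into the distance formula of Lemma \ref{angle_expres}. First I would observe that, componentwise, $\bD_{A}^{2i+1}\bU_{A,r}^{\top}\bL_{M}\bb$ equals $\bD$ times the $(i+1)$-th column of $\bH_k$: the $j$-th entry of the left side is $\sigma_j^{2i+1}(\bu_{A,j}^{\top}\bL_{M}\bb)$, which is exactly $\sigma_j(\bu_{A,j}^{\top}\bL_{M}\bb)\cdot\sigma_j^{2i}$. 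Hence the generating vectors in Lemma \ref{Struct_Vk} assemble into the single matrix $\bZ_r\bD\bH_k$, giving $\mathcal{S}_k=\mathrm{span}\{\bZ_r\bD\bH_k\}$. Splitting $\bZ_r=(\bZ_k,\bZ_r^{(2)})$ with $\bZ_r^{(2)}=(\bz_{k+1},\dots,\bz_r)$, and $\bD\bH_k$ conformally into its top $k\times k$ block $\bD_1\bH_{k1}$ and bottom block $\bD_2\bH_{k2}$, I would then write $\bZ_r\bD\bH_k=\bZ_k\bD_1\bH_{k1}+\bZ_r^{(2)}\bD_2\bH_{k2}$.

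The crucial structural step is to argue that the top block $\bD_1\bH_{k1}$ is invertible. Here $\bH_{k1}$ is a square Vandermonde matrix in the nodes $\sigma_1^2,\dots,\sigma_k^2$, which are distinct by the hypothesis that the $\gamma_i=\sigma_i$ are distinct and positive, so $\bH_{k1}$ is nonsingular; and $\bD_1$ is nonsingular because the DPC model \eqref{dpc} guarantees $\bu_{A,i}^{\top}\bL_{M}\bb\neq0$, making all diagonal entries $\sigma_i(\bu_{A,i}^{\top}\bL_{M}\bb)$ nonzero. Right-multiplying by $(\bD_1\bH_{k1})^{-1}$, which preserves the column space, then yields the clean representation $\mathcal{S}_k=\mathrm{span}\{\bZ_k+\bZ_r^{(2)}\bDelta_k\}$ with $\bDelta_k=\bD_2\bH_{k2}\bH_{k1}^{-1}\bD_1^{-1}$ exactly as in \eqref{delta_k}.

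To apply Lemma \ref{angle_expres} I would use the symmetry $\mathrm{dist}(\mathcal{S}_k,\mathcal{Z}_k)=\mathrm{dist}(\mathcal{Z}_k,\mathcal{S}_k)$ of canonical angles and take $\mathcal{F}=\mathcal{Z}_k$, $\mathcal{G}=\mathcal{S}_k$. Since $\bZ$ is $\bN^{-1}$-orthonormal (because $\bSigma_L=\bI$ after normalization), the pair $(\bZ_k,(\bz_{k+1},\dots,\bz_n))$ is the required $\bN^{-1}$-orthogonal splitting for $\mathcal{F}$, so $\bF_2=(\bz_{k+1},\dots,\bz_n)$. For $\mathcal{G}$ I would orthonormalize $\bP:=\bZ_k+\bZ_r^{(2)}\bDelta_k$: using the relations $\bZ_k^{\top}\bN^{-1}\bZ_k=\bI$, $\bZ_k^{\top}\bN^{-1}\bZ_r^{(2)}=\boldsymbol{0}$ and $(\bZ_r^{(2)})^{\top}\bN^{-1}\bZ_r^{(2)}=\bI$, the Gram matrix collapses to $\bP^{\top}\bN^{-1}\bP=\bI+\bDelta_k^{\top}\bDelta_k$, so $\bG_1=\bP(\bI+\bDelta_k^{\top}\bDelta_k)^{-1/2}$. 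The same orthonormality relations give $\bF_2^{\top}\bN^{-1}\bP=\begin{pmatrix}\bDelta_k\\\boldsymbol{0}\end{pmatrix}$, whence
\begin{equation*}
	\mathrm{dist}(\mathcal{S}_k,\mathcal{Z}_k)=\|\bF_2^{\top}\bN^{-1}\bG_1\|_2=\left\|\bDelta_k(\bI+\bDelta_k^{\top}\bDelta_k)^{-1/2}\right\|_2.
\end{equation*}

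The final step is a routine spectral computation: writing $\bDelta_k$ in its singular value decomposition, the matrix $\bDelta_k(\bI+\bDelta_k^{\top}\bDelta_k)^{-1/2}$ has singular values $\nu_j/(1+\nu_j^2)^{1/2}$, where $\nu_j$ are the singular values of $\bDelta_k$; since $t\mapsto t/(1+t^2)^{1/2}$ is increasing, its spectral norm equals $\|\bDelta_k\|_2/(1+\|\bDelta_k\|_2^2)^{1/2}$, which is precisely \eqref{dist_VZ}. The main obstacle is the structural reduction to the graph form, and in particular correctly justifying the invertibility of $\bD_1\bH_{k1}$ via the distinct-singular-value and DPC hypotheses; everything afterward is $\bN^{-1}$-orthonormality bookkeeping together with the monotone scalar function applied to the singular values.
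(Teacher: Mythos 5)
Your proposal is correct and follows essentially the same route as the paper's own proof: it invokes Lemma \ref{Struct_Vk} to write $\mathcal{S}_k=\mathrm{span}\{\bZ_r\bD\bH_k\}$, uses the distinct-value and DPC hypotheses to invert $\bD_1\bH_{k1}$ and pass to the graph form $\bZ_r\bigl(\begin{smallmatrix}\bI\\\bDelta_k\end{smallmatrix}\bigr)$, orthonormalizes via the Gram matrix $\bI+\bDelta_k^{\top}\bDelta_k$, and applies Lemma \ref{angle_expres} followed by the spectral-norm identity. The only differences are expository: you make explicit the symmetry $\mathrm{dist}(\mathcal{S}_k,\mathcal{Z}_k)=\mathrm{dist}(\mathcal{Z}_k,\mathcal{S}_k)$ and the final SVD computation with the monotone map $t\mapsto t/(1+t^2)^{1/2}$, both of which the paper uses implicitly.
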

\begin{proof}
	First notice that the diagonals of $\bD_1$ are nonzero and the Vandermonde matrix $\bH_{k1}$ is invertible since $\{\sigma_i\}_{i=1}^{r}=\{\gamma_i\}_{i=1}^{r}$ have distinct values. Lemma \ref{Struct_Vk} implies $\mathcal{S}_k=\mathrm{span}\{\bZ_{r}\bD\bH_{k}\}$. Since
	\begin{align*}
		\bZ_{r}\bD\bH_{k} = \bZ_{r}\begin{pmatrix}
			\bD_{1}\bH_{k1} \\ \bD_{2}\bH_{k2}
		\end{pmatrix} = \bZ_{r}\begin{pmatrix}
			\bI \\ \bDelta_{k}
		\end{pmatrix}\bD_{1}\bH_{k1} ,
	\end{align*} 
	it follows that $\mathcal{S}_k=\mathrm{span}\left\{\bZ_{r}\begin{pmatrix}
		\bI \\ \bDelta_{k}
	\end{pmatrix}\right\}$. Using the relation
	\[ \left(\bZ_{r}\begin{pmatrix}
		\bI \\ \bDelta_{k}
	\end{pmatrix}\right)^{\top}\bN^{-1}\bZ_{r}\begin{pmatrix}
		\bI \\ \bDelta_{k}
	\end{pmatrix} = \bI + \bDelta_{k}^{\top}\bDelta_{k},\]
	one can check that the columns of $\bZ_{r}\begin{pmatrix}
		\bI \\ \bDelta_{k}
	\end{pmatrix}(\bI + \bDelta_{k}^{\top}\bDelta_{k})^{-1/2}$ constitute an $\bN^{-1}$-orthonormal basis of $\mathcal{S}_k$. Partition $\bZ$ as $\bZ=\bordermatrix*[()]{%
		\bZ_{k} & \bZ_{k1} & \bZ_{k2} & \cr
		k & r-k & n-r}$. By Lemma \ref{angle_expres}, we get 
	\begin{align*}
		\mathrm{dist}(\mathcal{S}_k, \mathcal{Z}_k)
		&= \left\|(\bZ_{k1}, \bZ_{k2})^{\top}\bN^{-1}(\bZ_{k}, \bZ_{k1})\begin{pmatrix}
			\bI \\ \bDelta_{k}
		\end{pmatrix}(\bI + \bDelta_{k}^{\top}\bDelta_{k})^{-1/2}\right\|_2 \\
		&= \left\|\begin{pmatrix}
			\bDelta_{k} \\ \mathbf{0}
		\end{pmatrix}(\bI + \bDelta_{k}^{\top}\bDelta_{k})^{-1/2}\right\|_2 
		= \frac{\|\bDelta_k\|_2}{\left(1+\|\bDelta_k\|_{2}^{2}\right)^{1/2}} .
	\end{align*}
	The proof is completed. \qed
\end{proof}

Theorem \ref{thm4.1} generalizes the results in \cite[\S 6.4]{Hansen1998} under a similar model and assumption. The value of $\|\bDelta_k\|_2$ is essential for measuring the distance between $\mathcal{S}_k$ and $\mathcal{Z}_k$, where a qualitative investigation is presented in \cite[\S 6.4]{Hansen1998}. Here we gain more insight into the variation of $\|\bDelta_k\|_2$ as $k$ increases. First, we have
\begin{align}\label{fnorm}
	\|\bDelta_k\|_2 &\leq
	\left\|\bD_2\right\|_2\left\|\bH_{k2}\bH_{k1}^{-1}\right\|_2\left\|\bD_1^{-1}\right\|_2  \nonumber \\
	&=\frac{\sigma_{k+1}}{\sigma_k}\frac{\max_{k+1\leq i\leq r}|\bu_{A,i}^{\top}\bL_{M}\bb|}
	{\min_{1\leq i\leq k}|\bu_{A,i}^{\top}\bL_{M}\bb|}\left\|\bH_{k2}\bH_{k1}^{-1}\right\|_2. 
\end{align}
Following the proof of \cite[Theorem 6.4.1]{Hansen1998}, the $i$-th column of $\bH_{k2}\bH_{k1}^{-1}$ is $\left(L_i^{(k)}(\sigma_{k+1}^2),\ldots,L_i^{(k)}(\sigma_{r}^2)\right)^{\top}$, where 
$L_i^{(k)}(\lambda) = \prod\limits_{j=1,j\neq i}^k(\lambda-\sigma_j^2)/(\sigma_i^2-\sigma_j^2)$ for $1\leq i\leq k$, $k>1$, and $L_1^{(1)}(\lambda)\equiv 1$. Notice that $\left|L_i^{(k)}(\lambda)\right|$ is monotonically decreasing for $\lambda\in[0, \sigma_{k}^{2}]$. The upper bound on it is $\left|L_i^{(k)}(0)\right|$. Let $\left|L_{i_0}^{(k)}(0)\right|=\max_{1\leq i \leq k}\left|L_i^{(k)}(0)\right|$. Then we obtain
\begin{align}\label{bnd_H}
	\left\|\bH_{k2}\bH_{k1}^{-1}\right\|_2 
	&\leq \sqrt{k}\max_{1\leq i\leq k}\left\|\left[\bH_{k2}\bH_{k1}^{-1}\right]_i\right\|_2  \nonumber \\
	&\leq \sqrt{k}\max_{1\leq i\leq k}\sqrt{r-k}\left|L_i^{(k)}(0)\right| = \sqrt{k(r-k)}\left|L_{i_0}^{(k)}(0)\right| ,
\end{align}
where $[\cdot]_i$ denotes the $i$-th column of a matrix. 

To further investigate the upper bound on $\|\bDelta_k\|_2$ in \eqref{fnorm}, we use the simplified model $\sigma_i=\gamma_i=\gamma_{0}e^{-\alpha i}$ for $i=1,\dots, r$ to describe the decaying rate of $\gamma_i$, i.e. they decay exponentially \footnote{This model describes the so called ``severely ill-posed'' forward operator $\bA$. Another model $\gamma_i=\gamma_{1}i^{-\alpha}$ for $i=1,\dots, r$ describes the ``modestly/mildly ill-posed'' $\bA$; see \cite[\S 2.2]{Engl2000} for more discussions and examples.}. For small $k$ that the noise components in $\bu_{A,i}^{\top}\bL_{M}\bb$ do not dominate, using the model \eqref{dpc} we have $\frac{\sigma_{k+1}}{\sigma_k}\frac{\max_{k+1\leq i\leq r}|\bu_{A,i}^{\top}\bL_{M}\bb|}{\min_{1\leq i\leq k}|\bu_{A,i}^{\top}\bL_{M}\bb|}\approx (\gamma_{k+1}/\gamma_k)^{2+\beta}=e^{-\alpha(2+\beta)}$. Then we estimate the bound on $\left\|\bH_{k2}\bH_{k1}^{-1}\right\|_2$ in \eqref{bnd_H}. Obviously $\left|L_{i_0}^{(1)}(0)\right|=1$. For $k\geq 2$, we have
\begin{align*}
	\left|L_{i_0}^{(k)}(0)\right|
	&= \prod\limits_{j=1,j\neq i_0}^k
	\left|\frac{\sigma_j^2}{\sigma_j^2-\sigma_{i_0}^2}\right|
	= \prod\limits_{j=1}^{i_0-1}\frac{\sigma_j^2}{\sigma_j^2-\sigma_{i_0}^2}
	\prod\limits_{j=i_0+1}^{k}\frac{\sigma_j^2}{\sigma_{i_0}^2-\sigma_{j}^2} \\
	& =\prod\limits_{j=1}^{i_0-1}\frac{1}
	{1-e^{-2(i_0-j)\alpha}}
	\prod\limits_{j=i_0+1}^{k}\frac{1}
	{e^{2(j-i_0)\alpha}-1} \\
	& =\prod\limits_{j=1}^{i_0-1}\frac{1}
	{1-e^{-2j\alpha}}
	\prod\limits_{j=1}^{k-i_0}\frac{1}{1-e^{-2j\alpha}}\prod\limits_{j=1}^{k-i_0}\frac{1}{e^{2j\alpha}} \\
	&\leq \prod\limits_{j=1}^{i_0-1} \left(1+\frac{1}{2j\alpha}\right) \prod\limits_{j=1}^{k-i_0} \left(1+\frac{1}{2j\alpha}\right) e^{-\alpha(k-i_0)(k-i_0+1)},
\end{align*}
where we use $1/(1-e^{-x})\leq 1+1/x$ for $x>-1$. Notice that for $i_0=1$ the first term vanishes, and for $i_0=k$ the second and third terms vanish. Define $S_1=\prod\limits_{j=1}^{i_0-1} \left(1+\frac{1}{2j\alpha}\right)$. Then
\begin{align*}
	\ln S_1 = \sum_{j=1}^{i_0-1}\ln\left( 1+\frac{1}{2j\alpha}\right)
	\leq \frac{1}{2\alpha}\sum_{j=1}^{i_0-1}\frac{1}{j}
	\leq \frac{1}{2\alpha}\left(1 + \int_{1}^{i_0-1}\frac{1}{j} dx\right)
	= \frac{1+\ln(i_0-1)}{2\alpha} .
\end{align*}
Similarly, we have $\ln \prod\limits_{j=1}^{k-i_0} \left(1+\frac{1}{2j\alpha}\right) \leq \frac{1+\ln(k-i_0)}{2\alpha}$. Therefore, we obtain
\begin{align*}
	\ln \left|L_{i_0}^{(k)}(0)\right| 
	&\leq \frac{1}{2\alpha}\left(2+\ln(i_0-1)(k-i_0)\right) -\alpha(k-i_0)(k-i_0+1) \\
	&\leq \frac{1}{2\alpha}\left(2+\ln\frac{(k-1)^2}{4}\right)-\alpha(k-i_0)(k-i_0+1),
\end{align*}
which leads to
\begin{equation*}
	\left|L_{i_0}^{(k)}(0)\right| \leq \left(\frac{e(k-1)}{2} \right)^{\frac{1}{\alpha}} e^{-\alpha(k-i_0)(k-i_0+1)} .
\end{equation*}
If $i_0$ keeps around a constant value when $k$ increases, the above bound is a gradually decreasing quantity. If $k-i_0$ always remains a constant or decreases when $k$ increases, the above bound is pessimistic since it increases at the speed of $k^{1/\alpha}$. In this case, we derive an asymptotic bound:
\begin{align*}
	\left|L_{i_0}^{(k)}(0)\right|
	&\leq \prod\limits_{j=1}^{i_0-1}\frac{1}{1-e^{-2j\alpha}}
	= \prod\limits_{j=1}^{i_0-1}\left(1+\mathcal{O}(e^{-2j\alpha})\right) \\
	&= 1+ \mathcal{O}\left(\sum_{j=1}^{i_0-1}e^{-2j\alpha}\right)
	= 1+ \mathcal{O}\left(\frac{e^{-2\alpha}}{1-e^{-2\alpha}}\right) .
\end{align*}
A sharp upper bound on $\left|L_{i_0}^{(k)}(0)\right|$ is extremely difficult since $i_0$ may vary for different $k$, but in the worst case it grows from 1 very slightly controlled by $\mathcal{O}(e^{-2\alpha})$. 

From the above derivation, we have
\begin{equation}
	\|\bDelta_k\|_2 \lesssim \sqrt{k(r-k)}e^{-\alpha(2+\beta)}\left|L_{i_0}^{(k)}(0)\right| 
\end{equation}
for small $k$ that the noise in $|\bu_{A,k}^{\top}\bL_{M}\bb|$ is negligible. The bounded factor $\sqrt{k(r-k)}$ is an overestimate that can be reduced more or less, and $\left|L_{i_0}^{(k)}(0)\right|$ is slightly larger than 1 in the worst case. Therefore, we can loosely speak that $\|\bDelta_k\|_2$ remains small before the noise dominates, which implies that $\mathcal{S}_k$ captures main information about $\bx$ contained in $\mathcal{Z}_k$. Going further, we have the following result.

\begin{theorem}\label{lanc_process}
	Let the compact singular value decomposition (SVD) of $\bB_k$ be $\bB_k=\bJ_{k}\bGamma_{k}\bP_{k}^{\top}$, where $\bGamma_k=\mathrm{diag}(\theta_{1}^{(k)},\dots,\theta_{k}^{(k)})$ with $\theta_{1}^{(k)}>\theta_{2}^{(k)}>\cdots>\theta_{k}^{(k)}>0$ and $\bP_{k}=(\bp_{1}^{(k)},\dots,\bp_{k}^{(k)})$. Then each pair $\left((\theta_{i}^{(k)})^2, \bV_{k}\bp_{i}^{(k)}\right)$ converges to one of the generalized eigenvalue-vector pairs of $\bA^{\top}\bM^{-1}\bA\bz=\xi\bN^{-1}\bz$ as $k$ increases.
\end{theorem}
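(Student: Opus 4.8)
The plan is to show that gen-GKB implicitly runs a symmetric Lanczos process for the operator $\bA^{*}\bA=\bN\bA^{\top}\bM^{-1}\bA$, which is self-adjoint in the $\bN^{-1}$-inner product, so that the pairs $\left((\theta_i^{(k)})^2,\bV_k\bp_i^{(k)}\right)$ are precisely its Ritz pairs; then, after transforming to the standard inner product, to invoke the classical Lanczos convergence theory and identify the limit via the GSVD \eqref{gsvd}.

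First I would derive the three-term recurrence. Combining \eqref{GKB23} and \eqref{GKB33} with $\bA^{*}=\bN\bA^{\top}\bM^{-1}$ from Lemma \ref{lem:adj_matform} gives
\begin{equation*}
	\bA^{*}\bA\bV_k = \bA^{*}\bU_{k+1}\bB_k = \left(\bV_k\bB_k^{\top}+\alpha_{k+1}\bv_{k+1}\be_{k+1}^{\top}\right)\bB_k = \bV_k\bT_k + \alpha_{k+1}\beta_{k+1}\bv_{k+1}\be_k^{\top},
\end{equation*}
where $\bT_k=\bB_k^{\top}\bB_k$ is symmetric tridiagonal (since $\bB_k$ is lower bidiagonal) and I used $\be_{k+1}^{\top}\bB_k=\beta_{k+1}\be_k^{\top}$. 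As $\bV_k$ is $\bN^{-1}$-orthonormal, this is exactly the symmetric Lanczos recurrence for $\bA^{*}\bA$ in the $\bN^{-1}$-inner product. From the compact SVD $\bB_k=\bJ_k\bGamma_k\bP_k^{\top}$ one gets $\bT_k=\bP_k\bGamma_k^{2}\bP_k^{\top}$, so $\left((\theta_i^{(k)})^2,\bp_i^{(k)}\right)$ are the eigenpairs of $\bT_k$ and $\left((\theta_i^{(k)})^2,\bV_k\bp_i^{(k)}\right)$ are the Ritz pairs of $\bA^{*}\bA$ on $\mathrm{span}\{\bV_k\}$.

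Next I would remove the nonstandard inner product by the congruence $\tilde{\bV}_k=\bL_{N}\bV_k$, which is orthonormal in the usual sense. Setting $\hat{\bA}=\bL_{M}\bA\bL_{N}^{-1}$ and using $\bN^{-1}=\bL_{N}^{\top}\bL_{N}$ one checks $\bL_{N}(\bA^{*}\bA)\bL_{N}^{-1}=\hat{\bA}^{\top}\hat{\bA}$, so multiplying the recurrence on the left by $\bL_{N}$ yields
\begin{equation*}
	\hat{\bA}^{\top}\hat{\bA}\,\tilde{\bV}_k = \tilde{\bV}_k\bT_k + \alpha_{k+1}\beta_{k+1}\tilde{\bv}_{k+1}\be_k^{\top},
\end{equation*}
the standard symmetric Lanczos recurrence for the symmetric positive semidefinite matrix $\hat{\bA}^{\top}\hat{\bA}$ with orthonormal vectors $\tilde{\bV}_k$ and the same tridiagonal $\bT_k$. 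The GSVD \eqref{gsvd} gives $\hat{\bA}=\bU_{A}\bSigma_{A}\bU_{L}^{\top}$ (using $\bL_{N}^{-1}=\bZ\bU_{L}^{\top}$ and $\bSigma_{L}=\bI$), whence $\hat{\bA}^{\top}\hat{\bA}=\bU_{L}\bSigma_{A}^{\top}\bSigma_{A}\bU_{L}^{\top}$ has eigenvalues $\{\gamma_i^{2}\}$ and eigenvectors the columns of $\bU_{L}=\bL_{N}\bZ$, i.e.\ $\{\bL_{N}\bz_i\}$. Pulling back by $\bL_{N}^{-1}$ recovers the generalized eigenpairs $(\gamma_i^2,\bz_i)$ of $\bA^{\top}\bM^{-1}\bA\bz=\xi\bN^{-1}\bz$.

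Finally I would invoke the classical convergence theory of the symmetric Lanczos process (the Kaniel--Paige--Saad analysis, as in the monographs of Parlett and Saad) applied to $\hat{\bA}^{\top}\hat{\bA}$: under the DPC assumption the starting vector $\bA^{*}\bb=\bN\bA^{\top}\bM^{-1}\bb$ has a nonzero component along every eigenvector $\bL_{N}\bz_i$, so each Ritz value $(\theta_i^{(k)})^2$ converges to some eigenvalue $\gamma_j^2$ and the corresponding Ritz vector $\tilde{\bV}_k\bp_i^{(k)}$ converges to $\bL_{N}\bz_j$ as $k$ grows. Pulling back, $\bV_k\bp_i^{(k)}=\bL_{N}^{-1}\tilde{\bV}_k\bp_i^{(k)}\to\bz_j$ and $(\theta_i^{(k)})^2\to\gamma_j^2=\xi_j$, which is the asserted generalized eigenpair. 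The main obstacle is establishing convergence of the Ritz \emph{vectors}, not merely the values: the standard angle bounds require a spectral gap separating the target eigenvalue from the rest, so here I would lean on the distinctness of the $\{\gamma_i\}$ together with their decay (analysed above), which guarantees well-separated dominant eigenvalues and hence rapid convergence of the leading Ritz pairs; I would also remark that, since $\hat{\bA}^{\top}\hat{\bA}$ has rank $r$ and $\tilde{\bv}_1$ lies in its range, the Krylov subspace saturates at $k=r$, so in exact arithmetic the limit is in fact attained.
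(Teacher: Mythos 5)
Your proposal is correct and follows essentially the same route as the paper: both derive the recurrence $\bA^{*}\bA\bV_k=\bV_k\bB_k^{\top}\bB_k+\alpha_{k+1}\beta_{k+1}\bv_{k+1}\be_k^{\top}$ to exhibit gen-GKB as an implicit Lanczos tridiagonalization of $\bA^{\top}\bM^{-1}\bA$ (with starting vector $\bN\bA^{\top}\bM^{-1}\bb$) under the $\bN^{-1}$-inner product, identify $\left((\theta_i^{(k)})^2,\bV_k\bp_i^{(k)}\right)$ as the Ritz pairs via $\bB_k^{\top}\bB_k=\bP_k\bGamma_k^2\bP_k^{\top}$, and then appeal to Lanczos convergence theory. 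Your explicit congruence by $\bL_N$ to reduce to the standard symmetric Lanczos process and the GSVD identification of the limiting eigenpairs simply spell out what the paper delegates to the cited theory for generalized eigenvalue problems (and mirrors the transformation the paper itself uses later in proving Theorem \ref{filter_express}), so it is a welcome elaboration rather than a different argument.
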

\begin{proof}
	By \eqref{GKB23} and \eqref{GKB33}, we have
	\begin{align*}
		\bA^{\top}\bM^{-1}\bA\bV_k
		= (\bA^{\top}\bM^{-1}\bU_{k+1})\bB_{k}
		= \bN^{-1}\bV_{k}\bB_{k}^{\top}\bB_{k} + \alpha_{k+1}\beta_{k+1}\bN^{-1}\bv_{k+1}\be_{k}^{\top}, 
	\end{align*}
	where $\be_{k}$ is the $k$-th column of the identity matrix of order $k$. By \eqref{GKB2} and \eqref{GKB13}, we have
	\begin{equation*}
		\alpha_{1}\beta_{1}\bv_{1} = \bN\bA^{\top}\bM^{-1}\beta_{1}\bu_{1} = \bN\bA^{\top}\bM^{-1}\bb .
	\end{equation*}
	Notice that $\bB_{k}^{\top}\bB_{k}$ is a symmetric tridiagonal matrix with diagonals $\alpha_{i}^{2}+\beta_{i+1}^{2}$ and subdiagonals $\alpha_{i}\beta_{i}$ for $i=1,\dots,k$. The above two relations indicate that $\bT_k$ is the Ritz-Galerkin projection of $\bA^{\top}\bM^{-1}\bA$ onto subspace $\mathrm{span}\{\bV_k\}$ generated by the Lanczos tridiagonalization of $\bA^{\top}\bM^{-1}\bA$ with starting vector $\bN\bA^{\top}\bM^{-1}\bb$ under $\bN^{-1}$-inner product \cite[\S 5.5]{Bai2000}. Since the eigenvalue decomposition of $\bB_{k}^{\top}\bB_{k}$ is $\bB_{k}^{\top}\bB_{k}=\bP_{k}\bGamma_{k}^{2}\bP_{k}^{\top}$, by the convergence theory of Lanczos tridiagonalization for generalized eigenvalue problems, we obtain the desired result. \qed
\end{proof}

By \eqref{gen_eig}, the generalized eigenvalue-vector pairs for $\bA^{\top}\bM^{-1}\bA\bz=\xi\bN^{-1}\bz$ are $\{(\gamma_{i}^{2}, \bz_{i})\}_{i=1}^{n}$. The convergence behavior of $\left((\theta_{i}^{(k)})^2, \bV_{k}\bp_{i}^{(k)}\right)$ as $k$ increases is governed by the Kaniel-Paige-Saad theory that is similar to the eigenvalue problem of a single matrix \cite[\S 10.1.5]{Golub2013}, which states that it has a faster convergence to those generalized eigenvalues (and corresponding eigenvectors) lying in the two ends of the spectrum and well separated. Since $\gamma_{i}^{2}$ decreases and clusters at zero, we can hope for a faster convergence of $\left((\theta_{i}^{(k)})^2, \bV_{k}\bp_{i}^{(k)}\right)$ to some largest generalized eigenvalue and corresponding vector. Therefore, the information in $\mathcal{S}_k$ can be extracted to approximate dominant generalized eigenvectors $\bz_i$. These vectors constitute basic features about $\bx$ encoded by the Gaussian prior. In fact, the regularized solution computed by genGKB\_SPR contains those dominant $\bz_i$ while filtering out others. To reveal it, we need the following lemma, which generalizes \cite[Property 2.8]{Van1986rate} for the conjugate gradient (CG) method to a rank-deficient matrix.

\begin{lemma}\label{lem4.3}
	For a symmetric positive semi-definite linear system $\bC\bff=\bd$ with $\bd\in\mathcal{R}(\bC)$, where $\mathcal{R}(\cdot)$ is the range space of a matrix, the $k$-th CG solution of it with $\bd_0=\mathbf{0}$ can be written as $\bff_k=\left(\bI-\mathcal{R}_{k}(\bC)\right)\bC^{\dag}\bd$, where $\mathcal{R}_{k}$ is the normalized characteristic polynomial of $\bT_k$ as \eqref{Tk} such that $\mathcal{R}_{k}(0)=1$.
\end{lemma}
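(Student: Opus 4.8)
The plan is to reduce the semidefinite system to an honestly positive definite one by restricting to $\mathcal{R}(\bC)$, so that the classical equivalence between the conjugate gradient error polynomial and the characteristic polynomial of the Lanczos tridiagonal matrix $\bT_k$ can be invoked verbatim, and then to transfer the resulting identity back using $\bC^{\dag}$. First I would record the basic spectral facts. Since $\bC$ is symmetric positive semi-definite, $\mathbb{R}^{n}=\mathcal{R}(\bC)\oplus\mathcal{N}(\bC)$ is an orthogonal direct sum with $\mathcal{N}(\bC)=\mathcal{R}(\bC)^{\perp}$, and writing the spectral decomposition $\bC=\bQ\bLambda\bQ^{\top}$ shows that $\bC^{\dag}$ and every polynomial $p(\bC)$ are simultaneously diagonalized by $\bQ$; in particular $\bC^{\dag}$ commutes with $\bC$ and with $p(\bC)$, and $\bC\bC^{\dag}=\bC^{\dag}\bC$ is the orthogonal projector onto $\mathcal{R}(\bC)$. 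Because $\bff_0=\mathbf{0}$ and $\bd\in\mathcal{R}(\bC)$, the Krylov subspaces $\mathcal{K}_k(\bC,\bd)=\mathrm{span}\{\bd,\bC\bd,\dots,\bC^{k-1}\bd\}$ all lie inside $\mathcal{R}(\bC)$, so the iterates satisfy $\bff_k\in\mathcal{R}(\bC)$ and hence $\bC^{\dag}\bC\bff_k=\bff_k$. On $\mathcal{R}(\bC)$ the restriction of $\bC$ is positive definite and $\bff_*:=\bC^{\dag}\bd$ is the unique solution of $\bC\bff=\bd$ lying in $\mathcal{R}(\bC)$, so standard CG theory applies to the restricted problem.

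Next I would invoke the classical CG--Lanczos description of the iterates. After $k$ non-breakdown steps, the error admits the representation $\bff_*-\bff_k=\mathcal{R}_k(\bC)(\bff_*-\bff_0)=\mathcal{R}_k(\bC)\bff_*$, where $\mathcal{R}_k$ is the degree-$k$ polynomial minimizing the $\bC$-norm of the error over all polynomials normalized by $\mathcal{R}_k(0)=1$, and this optimal polynomial is exactly the characteristic polynomial of the Lanczos tridiagonal matrix $\bT_k$ normalized to take the value $1$ at the origin (the Ritz values of $\bT_k$ are positive, so the normalization is legitimate). Substituting $\bff_*=\bC^{\dag}\bd$ then gives
\[
\bff_k=\bff_*-\mathcal{R}_k(\bC)\bff_*=\left(\bI-\mathcal{R}_k(\bC)\right)\bC^{\dag}\bd,
\]
which is the claimed formula; equivalently one may rearrange $\bC\bff_k=(\bI-\mathcal{R}_k(\bC))\bd$ and apply $\bC^{\dag}$, using $\bC^{\dag}\bC\bff_k=\bff_k$ together with the commutativity of $\bC^{\dag}$ with $\mathcal{R}_k(\bC)$ and the consistency identity $\bC\bC^{\dag}\bd=\bd$ (valid because $\bd\in\mathcal{R}(\bC)$).

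The main obstacle will be justifying the CG--Lanczos polynomial identity in the rank-deficient setting, rather than the calculation itself, which is routine once that identity is in hand. The point to argue carefully is that the presence of $\mathcal{N}(\bC)$ is harmless: since the starting residual $\bd$ has no component in $\mathcal{N}(\bC)$, the Lanczos process generated by $\bC$ with starting vector $\bd$ never leaves $\mathcal{R}(\bC)$, so $\bT_k$ coincides with the tridiagonal matrix produced by the positive definite restriction, and its normalized characteristic polynomial is precisely the optimal CG polynomial $\mathcal{R}_k$ for the restricted problem. Once this is established, the passage from $\bC^{-1}$ to $\bC^{\dag}$ amounts to the single replacement $\bff_*=\bC^{\dag}\bd$, and the identity $\bff_k=(\bI-\mathcal{R}_k(\bC))\bC^{\dag}\bd$ follows, generalizing \cite[Property 2.8]{Van1986rate}.
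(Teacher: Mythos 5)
Your proof is correct and takes essentially the same route as the paper's: both arguments observe that $\bd\in\mathcal{R}(\bC)$ keeps the Lanczos/Krylov vectors inside $\mathcal{R}(\bC)$, reduce to the positive-definite restriction of $\bC$ there (the paper does this concretely via the compact decomposition $\bC=\bV_{C}\bSigma_{C}\bV_{C}^{\top}$ and the coordinate vectors $\bar{\bq}_i=\bV_{C}^{\top}\bq_i$, you do it abstractly via invariant subspaces and projectors), invoke the classical positive-definite CG--Lanczos polynomial identity \cite[Property 2.8]{Van1986rate}, and transfer back by replacing $\bC^{-1}$ with $\bC^{\dag}$.
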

\begin{proof}
	Suppose the $k$-step standard Lanczos tridiagonalization of $\bC$ with starting vector $\bd$ is written as the following matrix-form:
	\begin{align}
		\bC\bQ_{k} &= \bQ_{k}\bT_{k} + \delta_{k+1}\bq_{k+1}\be_{k}^{\top} , \label{lanc1} \\
		\delta_{1}\bq_1 &= \bd ,  \label{lanc2}
	\end{align}
	where $\{\bq_i\}_{i=1}^{k+1}$ are orthonormal and 
	\begin{equation}\label{Tk}
		\bT_{k} = \begin{pmatrix}
			\eta_1 & \delta_2 & & \\
			\delta_2 & \eta_2 & \ddots & \\
			& \ddots & \ddots & \delta_k \\
			&        & \delta_k & \eta_k
		\end{pmatrix} .
	\end{equation}
	We note that this process must terminate, i.e. $\delta_{k+1}=0$ for at most $k=l$ steps, where $l=\mathrm{rank}(\bC)$, and the corresponding solution is just $\bC^{\dag}\bd$; if the process does not terminate at the $k$-th step, then $\bT_k$ is invertible \cite[\S 4.2.3--\S 4.2.5]{bjorck2015numerical}. To prove this lemma, we use the fact that the $k$-th CG solution of $\bC\bff=\bd$ with $\bd_0=\mathbf{0}$ is $\bff_k=\bQ_{k}\bT_{k}^{-1}\delta_1\be_1$.
	
	Let the compact SVD of $\bC$ be $\bC=\bV_{C}\bSigma_{C}\bV_{C}^{\top}$, where $\bSigma_{C}\in\mathbb{R}^{l\times l}$ has positive diagonals and $\bV_{C}$ has $l$ orthonormal columns. Since $\bd\in\mathcal{R}(\bC)=\mathcal{R}(\bV_{C})$, it follows that $\bq_i\in\mathcal{R}(\bV_{C})$ for $i=1,\dots,k+1$. Let $\bq_{i}=\bV_{C}\bar{\bq}_{i}$ with $\bar{\bq}_{i}\in\mathbb{R}^{l}$ and $\bar{\bQ}_{k}=(\bar{\bq}_{1},\dots,\bar{\bq}_{k})$. It follows that $\{\bar{\bq}_i\}_{i=1}^{k+1}$ are orthonormal. From \eqref{lanc1} and \eqref{lanc2} we get
	\begin{align*}
		\bSigma_{C}\bar{\bQ}_{k} &= \bar{\bQ}_{k}\bT_{k}+\delta_{k+1}\bar{\bq}_{k+1}\be_{k}^{\top},   \\
		\delta_{1}\bar{\bq}_{1}  &= \bV_{C}^{\top}\bd ,
	\end{align*}
	which is just the matrix-form of the Lanczos tridiagonalization of $\bSigma_{C}$ with starting vector $\bV_{C}^{\top}\bd$. Let $\bar{\bff}_k=\bar{\bQ}_{k}\bT_{k}^{-1}\delta_1\be_1$. Since $\bSigma_{C}$ is positive definite, by \cite[Property 2.8]{Van1986rate} we have $\bar{\bff}_k=\left(\bI-\mathcal{R}_{k}(\bSigma_{C})\right)\bSigma_{C}^{-1}\bV_{C}^{\top}\bd$. Using $\bQ_{k}=\bV_{C}\bar{\bQ}_{k}$, we obtain
	\begin{align*}
		\bff_k 
		&= \bV_{C}\bar{\bff}_k= \bV_{C}\left(\bI-\mathcal{R}_{k}(\bSigma_{C})\right)\bSigma_{C}^{-1}\bV_{C}^{\top}\bd \\
		&= \left(\bI-\mathcal{R}_{k}(\bV_{C}\bSigma_{C}\bV_{C}^{\top})\right)\bV_{C}\bSigma_{C}^{-1}\bV_{C}^{\top}\bd \\
		&= \left(\bI-\mathcal{R}_{k}(\bC)\right)\bC^{\dag}\bd .
	\end{align*}
	The proof is completed. \qed
\end{proof}

\begin{theorem}\label{filter_express}
	The $k$-th regularized solution computed by genGKB\_SPR has the expression
	\begin{equation}\label{filter_solution}
		\bx_k = \sum_{i=1}^{r}f_{i}^{(k)}\frac{\bu_{A,i}^{\top}\bL_{M}\bb}{\sigma_i}\bz_i
	\end{equation}
	with filter factors
	\begin{equation}\label{filter1}
		f_{i}^{(k)} = 1 - \prod_{j=1}^{k}\frac{(\theta_{j}^{(k)})^2-\gamma_{i}^{2}}{(\theta_{j}^{(k)})^2}, \ \ i=1,\dots,r.
	\end{equation}
\end{theorem}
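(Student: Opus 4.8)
The plan is to recognize $\bx_k$ as a conjugate gradient iterate and then invoke Lemma \ref{lem4.3}. Set $\bC:=\bA^{*}\bA=\bN\bA^{\top}\bM^{-1}\bA$ and $\bd:=\bA^{*}\bb=\bN\bA^{\top}\bM^{-1}\bb$. The operator $\bC$ is self-adjoint and positive semi-definite in the $\bN^{-1}$-inner product, and $\bd\in\mathcal{R}(\bC)$ since $\mathcal{R}(\bA^{*}\bA)=\mathcal{R}(\bA^{*})$. First I would show that the genGKB\_SPR iterate equals the $k$-th CG iterate for $\bC\bff=\bd$ (started at $\mathbf{0}$) in this geometry. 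Indeed, using $\langle\bC\bff,\bff\rangle_{\bN^{-1}}=\|\bA\bff\|_{\bM^{-1}}^{2}$ and $\langle\bff,\bd\rangle_{\bN^{-1}}=\langle\bA\bff,\bb\rangle_{\bM^{-1}}$, the CG energy $\|\bff-\bC^{\dag}\bd\|_{\bC}^{2}$ differs from $\|\bA\bff-\bb\|_{\bM^{-1}}^{2}$ only by an additive constant, so minimizing the former over $\mathcal{K}_k(\bC,\bd)=\mathrm{span}\{\bV_k\}$ (by Proposition \ref{prop:Krylov}) reproduces exactly the SPR solution \eqref{k_dim_ls}.

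Because Lemma \ref{lem4.3} is stated for the standard inner product, the second step is to pass to that geometry through the isometry $\bx\mapsto\bL_{N}\bx$ from $(\mathbb{R}^{n},\langle\cdot,\cdot\rangle_{\bN^{-1}})$ to $(\mathbb{R}^{n},\langle\cdot,\cdot\rangle_{2})$. Setting $\tilde{\bA}=\bL_{M}\bA\bL_{N}^{-1}$, one checks $\bL_{N}\bC\bL_{N}^{-1}=\tilde{\bA}^{\top}\tilde{\bA}=:\tilde{\bC}$ and $\bL_{N}\bd=\tilde{\bA}^{\top}\bL_{M}\bb=:\tilde{\bd}$, and that conjugation by $\bL_{N}$ sends the $\bN^{-1}$-Lanczos recursion for $(\bC,\bd)$ to the standard Lanczos recursion for $(\tilde{\bC},\tilde{\bd})$ with identical coefficients, so $\tilde{\bx}_k:=\bL_{N}\bx_k$ is the $k$-th CG iterate for the symmetric positive semi-definite system $\tilde{\bC}\tilde{\bff}=\tilde{\bd}$ with the same tridiagonal matrix $\bT_k$. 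By Theorem \ref{lanc_process} this tridiagonal matrix is $\bT_k=\bB_k^{\top}\bB_k$, whose eigenvalues are $\{(\theta_j^{(k)})^2\}_{j=1}^{k}$; hence its normalized characteristic polynomial is $\mathcal{R}_k(\lambda)=\prod_{j=1}^{k}\big((\theta_j^{(k)})^2-\lambda\big)/(\theta_j^{(k)})^2$. Lemma \ref{lem4.3} then gives $\tilde{\bx}_k=\big(\bI-\mathcal{R}_k(\tilde{\bC})\big)\tilde{\bC}^{\dag}\tilde{\bd}$.

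The final step is a direct computation in the singular system of $\tilde{\bA}$. Using $\bL_{N}=\bU_{L}\bZ^{-1}$ (so $\bL_{N}^{-1}=\bZ\bU_{L}^{\top}$) one obtains $\tilde{\bA}=\bU_{A}\bSigma_{A}\bU_{L}^{\top}$, i.e. the SVD of $\tilde{\bA}$ with singular values $\sigma_i=\gamma_i$ and right singular vectors $\bu_{L,i}$ (the columns of $\bU_{L}$). Then $\tilde{\bC}^{\dag}\tilde{\bd}=\sum_{i=1}^{r}\sigma_i^{-1}(\bu_{A,i}^{\top}\bL_{M}\bb)\,\bu_{L,i}$, and since $\mathcal{R}_k(\tilde{\bC})\bu_{L,i}=\mathcal{R}_k(\gamma_i^2)\bu_{L,i}$, applying $\bI-\mathcal{R}_k(\tilde{\bC})$ introduces the factor $f_i^{(k)}=1-\mathcal{R}_k(\gamma_i^2)$, which is exactly \eqref{filter1}. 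Multiplying by $\bL_{N}^{-1}$ and using $\bL_{N}^{-1}\bu_{L,i}=\bZ\bU_{L}^{\top}\bu_{L,i}=\bz_i$ recovers \eqref{filter_solution}.

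The main obstacle is the bookkeeping of the two inner products: establishing cleanly that the genGKB\_SPR iterate is genuinely the CG iterate for $\bA^{*}\bA\bff=\bA^{*}\bb$ in the $\bN^{-1}$-geometry, and that conjugating by $\bL_{N}$ turns this into the standard-geometry CG problem to which Lemma \ref{lem4.3} applies without altering $\mathcal{R}_k$. Once this covariance is in place, the identification of the eigenvalues of $\bT_k$ with $(\theta_j^{(k)})^2$ and the GSVD/SVD bookkeeping are routine.
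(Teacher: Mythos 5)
Your proposal is correct and follows essentially the same route as the paper's own proof: both pass to the standard geometry via $\bx\mapsto\bL_{N}\bx$, identify $\bL_{N}\bx_k$ as the $k$-th CG iterate of the semi-definite system $(\bL_{M}\bA\bL_{N}^{-1})^{\top}(\bL_{M}\bA\bL_{N}^{-1})\bff=(\bL_{M}\bA\bL_{N}^{-1})^{\top}\bL_{M}\bb$, invoke Lemma \ref{lem4.3} with $\mathcal{R}_k$ the normalized characteristic polynomial of $\bB_{k}^{\top}\bB_{k}$ (whose roots are the $(\theta_{j}^{(k)})^2$), and finish with the same GSVD bookkeeping. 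The only cosmetic difference is that you justify the CG identification variationally (the energy seminorm equals the residual norm up to an additive constant), whereas the paper does it algebraically by conjugating the gen-GKB relations by $\bL_{N}$ and reading off $\bL_{N}\bx_k=(\bL_{N}\bV_k)\bT_{k}^{-1}\alpha_1\beta_1\be_1$.
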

\begin{proof}
	Following the proof of Theorem \ref{lanc_process}, we have
	\begin{align*}
		(\bL_{N}^{-\top}\bA^{\top}\bM^{-1}\bA\bL_{N}^{-1})\bL_{N}\bV_k &= \bL_{N}\bV_{k}(\bB_{k}^{\top}\bB_{k}) + \alpha_{k}\beta_{k}\bL_{N}\bv_{k+1}\be_{k}, \\
		\alpha_{1}\beta_{1}\bL_{N}\bv_{1} &= \bL_{N}^{-\top}\bA^{\top}\bM\bb ,
	\end{align*}
	and $\{\bL_{N}\bv_i\}_{i=1}^{k+1}$ are 2-orthonormal. This implies that $\bL_{N}\bv_i$ are the Lanczos vectors generated by the standard Lanczos tridiagonalization of $\bL_{N}^{-\top}\bA^{\top}\bM^{-1}\bA\bL_{N}^{-1}$ with starting vector $\bL_{N}^{-\top}\bA^{\top}\bM\bb$. Let $\tilde{\bx}_{k}=\bL_{N}\bx_k$. Then by \eqref{k_dim_ls} we have $\tilde{\bx}_k=\bL_{N}\bV_{k}(\bB_{k}^{\top}\bB_{k})^{-1}\bB_{k}^{\top}\beta_{1}\be_1=(\bL_{N}\bV_{k})\bT_{k}^{-1}\alpha_{1}\beta_{1}\be_1$. Therefore, $\tilde{\bx}_k$ is the $k$-th CG solution of the symmetric positive semi-definite linear system
	\begin{equation}\label{nm_eq}
		\bL_{N}^{-\top}\bA^{\top}\bM^{-1}\bA\bL_{N}^{-1}\bx = \bL_{N}^{-\top}\bA^{\top}\bM\bb .
	\end{equation}
	Note that 
	\[\bL_{N}^{-\top}\bA^{\top}\bM\bb=(\bL_{M}\bA\bL_{N}^{-1})^{\top}\bL_{M}b, \ \ \ 
	\bL_{N}^{-\top}\bA^{\top}\bM^{-1}\bA\bL_{N}^{-1}=(\bL_{M}\bA\bL_{N}^{-1})^{\top}(\bL_{M}\bA\bL_{N}^{-1}) . \]
	It follows that $\bL_{N}^{-\top}\bA^{\top}\bM\bb\in\mathcal{R}(\bL_{N}^{-\top}\bA^{\top}\bM^{-1}\bA\bL_{N}^{-1})$ and 
	\begin{align*}
		& \ \ \ \ (\bL_{N}^{-\top}\bA^{\top}\bM^{-1}\bA\bL_{N}^{-1})^{\dag}\bL_{N}^{-\top}\bA^{\top}\bM\bb \\
		&= (\bL_{M}\bA\bL_{N}^{-1})^{\dag}\bL_{M}b = \left(\bU_{A}\bSigma_{A}(\bL_{N}\bZ)^{-1}\right)^{\dag}\bL_{M}b \\
		&= \bL_{N}\bZ\bSigma_{A}^{\dag}\bU_{A}^{\top}\bL_{M}b ,
	\end{align*}
	where we use \eqref{gsvd} and the property that $\bL_{N}\bZ$ has orthonormal columns. Therefore, by Lemma \ref{lem4.3} we have 
	\begin{equation}\label{filter2}
		\tilde{\bx}_k = \left(\bI-\mathcal{R}_k(\bL_{N}^{-\top}\bA^{\top}\bM^{-1}\bA\bL_{N}^{-1})\right)\bL_{N}\bZ\bSigma_{A}^{\dag}\bU_{A}^{\top}\bL_{M}b ,
	\end{equation}
	where $\mathcal{R}_k$ is the normalized characteristic polynomial of $\bB_{k}^{\top}\bB_{k}$ such that $\mathcal{R}_{k}(0)=1$.
	
	By \eqref{gen_eig}, we have $\bL_{N}^{-\top}\bA^{\top}\bM^{-1}\bA\bL_{N}^{-1}=\bL_{N}^{-\top}\bZ^{-\top}\bSigma_{A}^{\top}\bSigma_{A}\bZ^{-1}\bL_{N}^{-1}$. Since $\bZ^{-1}\bL_{N}^{-1}\bL_{N}^{-\top}\bZ^{-\top}=\left(\bZ^{\top}\bN^{-1}\bZ\right)^{-1}=\bI$, we get $\bI-\mathcal{R}_k(\bL_{N}^{-\top}\bA^{\top}\bM^{-1}\bA\bL_{N}^{-1})=\bL_{N}^{-\top}\bZ^{-\top}\bLambda\bZ^{-1}\bL_{N}^{-1}$, where $\bLambda=\mathrm{diag}\left(\{1-\mathcal{R}_k(\sigma_{i}^{2})\}_{i=1}^{n}\right)$, where $\sigma_i:=0$ for $r+1\leq i \leq n$.
	Therefore, \eqref{filter2} leads to $\tilde{\bx}_k=\bL_{N}^{-\top}\bZ^{-\top}\bLambda\bSigma_{A}^{\dag}\bU_{A}^{\top}\bL_{M}\bb$, and 
	\begin{align*}
		\bx_{k} 
		&= \bL_{N}^{-1}\bL_{N}^{-\top}\bZ^{-\top}\bLambda\bSigma_{A}^{\dag}\bU_{A}^{\top}\bL_{M}\bb 
		= \bZ\bLambda\bSigma_{A}^{\dag}\bU_{A}^{\top}\bL_{M}\bb  \\
		&= \begin{pmatrix}
			\bZ_{r} & \bZ_{r1}
		\end{pmatrix}
		\begin{pmatrix}
			\bLambda_{r} & \\
			& \bLambda_{r1}
		\end{pmatrix}
		\begin{pmatrix}
			\bD_{A}^{-1} &  \\
			& \mathbf{0}
		\end{pmatrix}
		\begin{pmatrix}
			\bU_{A,r}^{\top} \\ \bU_{A,r1}^{\top}
		\end{pmatrix}\bL_{M}\bb  \\
		&= \bZ_{r}\bLambda_{r}\bD_{A}^{-1}\bU_{A,r}^{\top}\bL_{M}\bb ,
	\end{align*}
	where $\bZ$, $\Lambda$ and $\bU_{A}$ are partitioned such that the above matrix multiplication is reasonable. By Theorem \ref{lanc_process}, the eigenvalues of $\bB_{k}^{\top}\bB_{k}$ are $\{(\theta_{j}^{(k)})^2\}_{i=1}^k$. If follows that $\mathcal{R}_k((\theta_{j}^{(k)})^2)=0$ for $1\leq i \leq k$. Since $\mathcal{R}_{k}(0)=1$ and the degree of $\mathcal{R}_{k}$ is $k$, we have
	\begin{equation*}
		\mathcal{R}_k(\theta) = \prod_{j=1}^{k}\frac{(\theta_{j}^{(k)})^2-\theta}{(\theta_{j}^{(k)})^2} .
	\end{equation*}
	Therefore, we finally obtain $\bx_{k}=\sum_{i=1}^{r}f_{i}^{(k)}\frac{\bu_{A,i}^{\top}\bL_{M}\bb}{\sigma_i}\bz_i$ with $f_{i}^{(k)}$ defined as \eqref{filter1}. \qed
\end{proof}

Theorem \ref{filter_express} shows that $\bx_k$ has a filtered GSVD expansion form. If the $k$ Ritz values $\{(\theta_{j}^{(k)})^2\}_{i=1}^{k}$ approximate the first $k$ generalized singular values $\{\gamma_i^2\}_{i=1}^{k}$ in natural order, i.e. $(\theta_{j}^{(k)})^2\approx\gamma_i^2$ for $i=1,\dots,k$, from \eqref{filter1} we can justify that $f_{i}^{(k)}\approx 1$ for $i=1,\dots,k$ and $f_{i}^{(k)}$ decreases monotonically to zero for $i=k+1,\dots,r$. Comparing \eqref{filter_solution} with \eqref{Tikh_gsvd} and \eqref{2.17}, we find that $\bx_k$ contains the first $k$ dominant GSVD components and filters the others. Therefore, as $k$ increases, the iterative solution first approximates $\bx_{\text{true}}$, but as $k$ becomes too big, $\bx_k$ will diverge from $\bx_{\text{true}}$ since too many noisy components are contained. The transition point is the semi-convergence point, precisely what the early stopping rules aim to estimate.

%%-------------------------------------------------------------------------
\section{Experimental Results} \label{sec5}
We conduct several numerical experiments to demonstrate the performance of our proposed algorithm. The generalized hybrid iterative method proposed in \cite{Chung2017} is used as a comparison, which is also based on gen-GKB and equivalent to solving \eqref{Bayes1} by projecting it onto the solution space $\mathrm{span}\{\bV_k\}$ to get 
\begin{equation}\label{hyb_k}
	\min_{\bx=\bV_k\by}\{\|\bA\bx-\bb\|_{\bM^{-1}}^{2} + \lambda\|\bx\|_{\bN^{-1}}^2\}
	= \min_{y\in\mathbb{R}^{k}}\|\bB_k\by-\beta_1\be_{1}\|_{2}^2 + \lambda \|\by\|_{2}^2.
\end{equation}
In order to solve the above $k$ dimensional projected problem, at each iteration, the weighted GCV (WGCV) method \cite{Chungnagy2008} is adopted to determine a regularization parameter $\lambda_k$ to form $\lambda_k\|\by\|_{2}^2$. In the experiments, we name the generalized hybrid iterative method with WGCV as genHyb\_WGCV. The convergence behaviors of the two methods are shown by plotting the variation of relative reconstruction error $ \|\bx_k-\bx_{\text{true}}\|_2/\|\bx_{\text{true}}\|_2$ with respect to $k$. The reconstructed solutions are also drawn to further compare the two methods. All the experiments are performed in MATLAB R2019b, where some codes in \cite{Hansen2007,Gazzola2019} are exploited. 

%----------------------------------------------------------
\subsection{Small-scale problems}
We first choose two small-scale linear inverse problems from \cite{Hansen2007} for the test. In this case, we can factorize $\bM^{-1}$ and $\bN^{-1}$ to form \eqref{gen_regu}, and solve it directly to find the optimal Tikhonov regularization parameter $\lambda_{opt}$ and corresponding solution $\bx_{opt}$, that is $\lambda_{opt}=\min_{\lambda>0}\|\bx_{\lambda}-\bx_{\text{true}}\|_{2}$. We use this optimal solution as a baseline for comparing the two methods.

\paragraph{Experiment 1.}
The first small-scale test problem is {\sf gravity}, which is a one-dimensional model for gravity surveying. It aims to reconstruct a mass distribution $f(t)$ located at depth $d$ from a noisy vertical component of the gravity field $g(s)$ measured at the surface. The forward model can be expressed by the first-kind Fredholm integral equation
\begin{equation}\label{fred}
	g(s) = \int_{a}^{b}K(s,t)f(t)\mathrm{d}t
\end{equation}
with $t\in[a, b]=[0, 1]$ and $s\in [0,1]$, where the kernel $K$ and true solution $f$ are given by
\begin{align*}
	K(s, t) &= d \left(d^2+(s-t)^2\right)^{-3/2}, \ \ \ d = 0.25, \\
	f(t) &= \sin(\pi t) + 0.5\sin(2\pi t).
\end{align*}
This equation is discretized by the midpoint quadrature rule with $m=n=2000$ uniform points, leading to the matrix $\bA\in\mathbb{R}^{2000\times 2000}$, $\bx_{\text{true}}\in\mathbb{R}^{2000}$ and $\bb_{\text{true}}=\bA\bx_{\text{true}}$. We construct a discrete Gaussian white noise $\bepsilon\sim\mathcal{N}(\mathbf{0},\sigma^2\bI)$ such that the noisy level $\mathbb{E}[\|\bepsilon\|_{2}]/\|\bb_{\text{true}}\|_{2}\approx \left(\mathbb{E}[\|\bepsilon\|_{2}^2]/\|\bb_{\text{true}}\|_{2}^2\right)^{1/2}=\sqrt{m}\sigma/\|\bb_{\text{true}}\|_{2}=5\times 10^{-3}$, and then let $\bb=\bb_{\text{true}}+\bepsilon$. The true solution and noisy observed data are shown in Figure \ref{fig1}.

\begin{figure}[htbp]
	\centering
	\subfloat 
	{\label{fig:1a}\includegraphics[width=0.34\textwidth]{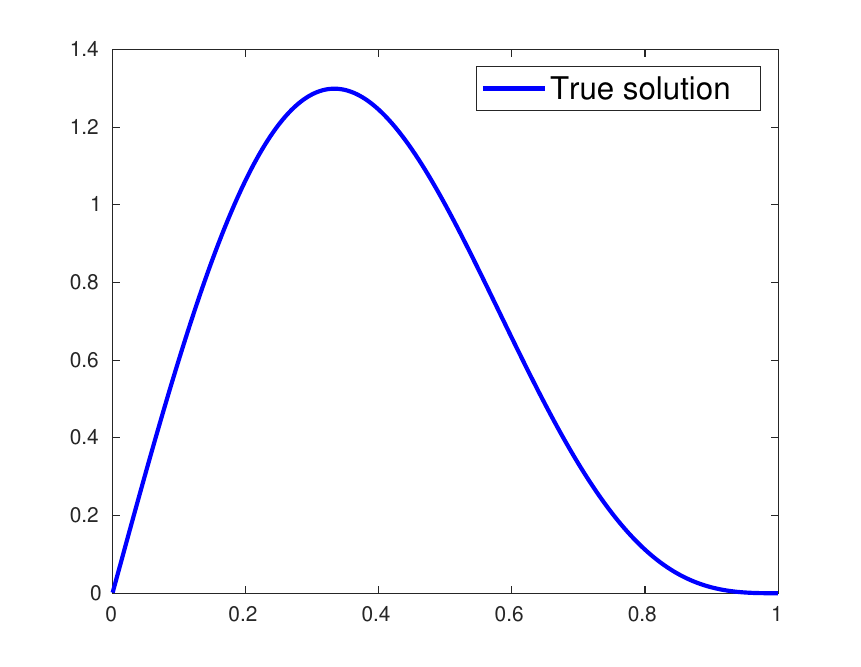}}
	\subfloat
	{\label{fig:1b}\includegraphics[width=0.36\textwidth]{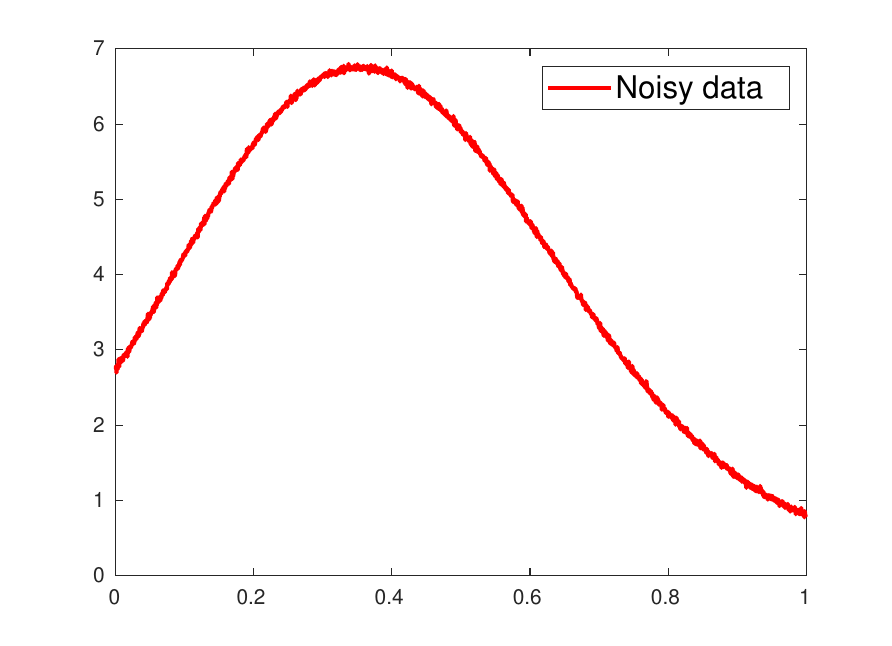}}
	\caption{Illustration of the true solution and noisy observed data for {\sf gravity}.}
	\label{fig1}
\end{figure}

To reconstruct $\bx$ from noisy $\bb$, we assume a Gaussian prior $\bx\sim\mathcal{N}(\boldsymbol{0}, \lambda^{-1}\bN)$ with $\bN$ coming from the Gaussian kernel $\kappa_{G}$, i.e. the $ij$ element of $\bN$ is
\begin{equation*}
	\bN_{ij} = \kappa_{G}(r_{ij}), \ \ \ \kappa_{G}(r):=\exp\left(-r^2/(2l^2)\right) , 
\end{equation*}
where $r_{ij}=\|\bp_i-\bp_j\|_{2}$ and $\{\bp_{i}\}_{i=1}^{n}$ are discretized points of the domain of $f$ (i.e. $f(\bp_i)=[\bx_{\text{true}}]_{i}$). The parameter $l$ is set as $l=0.1$.

\begin{figure}[!htbp]
	\centering
	\subfloat
	{\label{fig:2a}\includegraphics[width=0.42\textwidth]{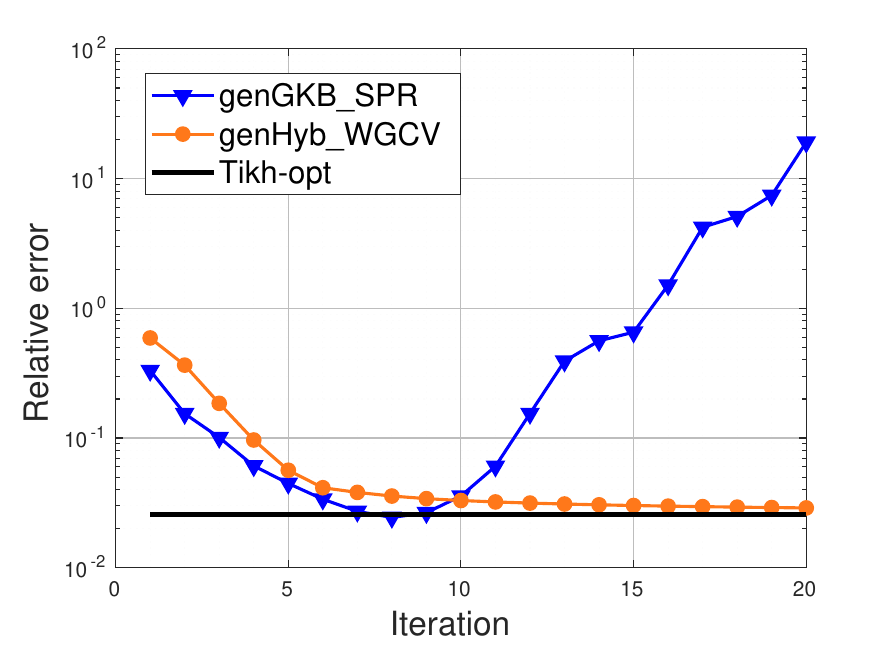}}
	\subfloat
	{\label{fig:2b}\includegraphics[width=0.42\textwidth]{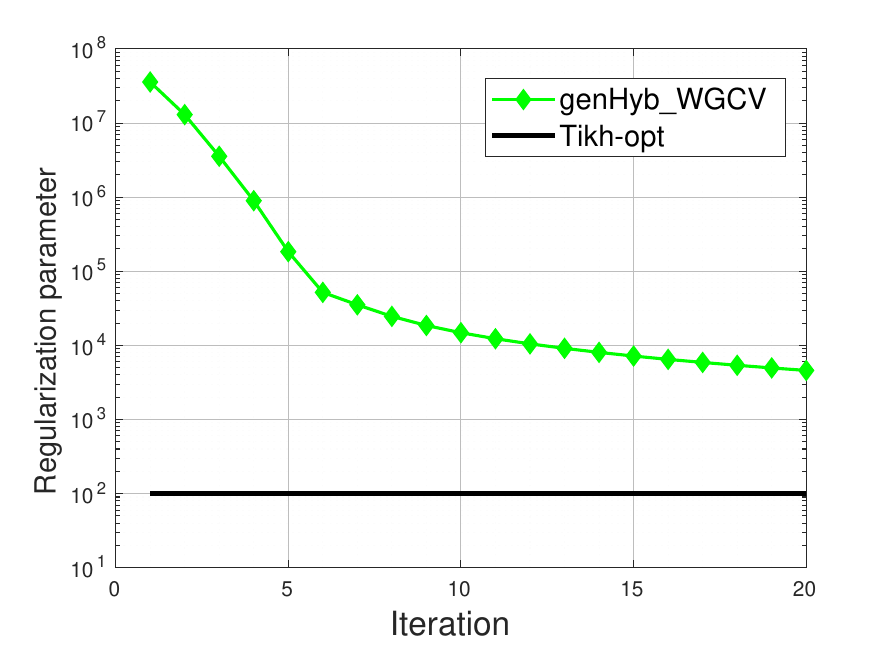}}
	\caption{Comparison of convergence behaviors between genGKB\_SPR and genHyb\_WGCV for {\sf gravity}. Left: relative error curves of regularized solution with respect to iteration number; Right: convergence trend of regularization parameters of genHyb\_WGCV towards $\lambda_{opt}$.}
	\label{fig2}
\end{figure}

\begin{figure}[htbp]
	\centering
	\subfloat 
	{\label{fig:3a}\includegraphics[width=0.35\textwidth]{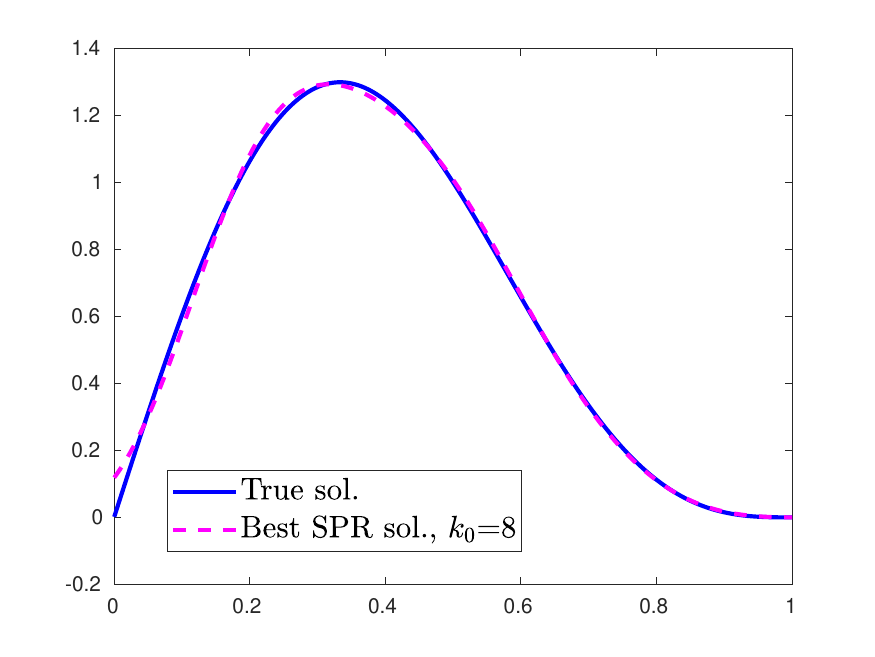}}
	\subfloat{\label{fig:3b}\includegraphics[width=0.36\textwidth]{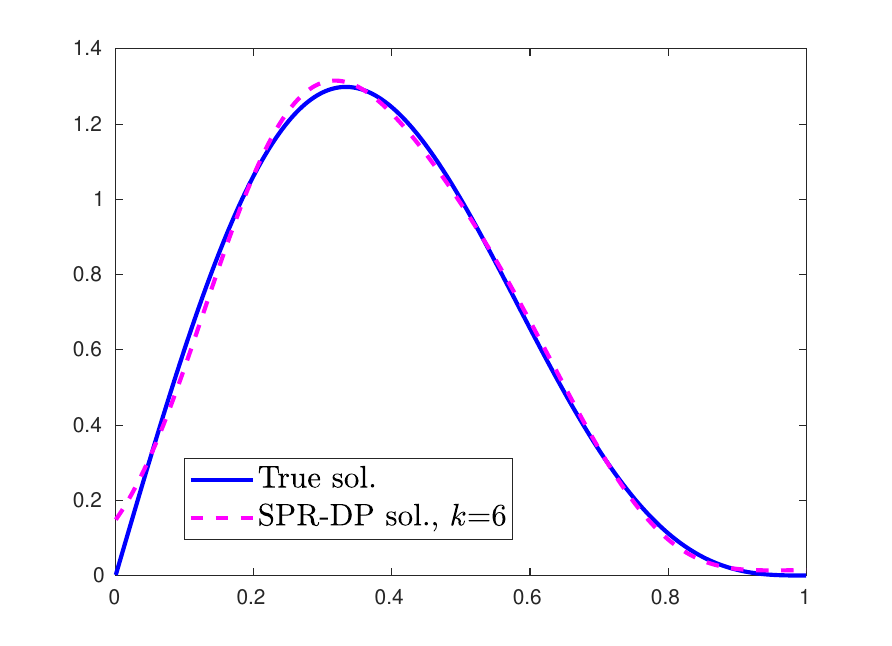}}
	\vspace{-5mm}
	\subfloat 
	{\label{fig:3c}\includegraphics[width=0.34\textwidth]{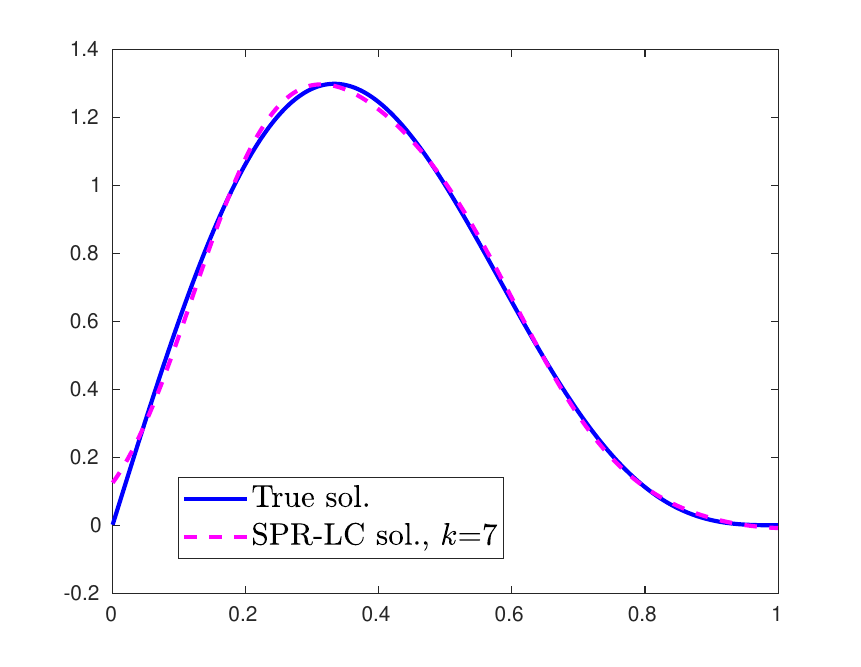}}
	\subfloat{\label{fig:3d}\includegraphics[width=0.36\textwidth]{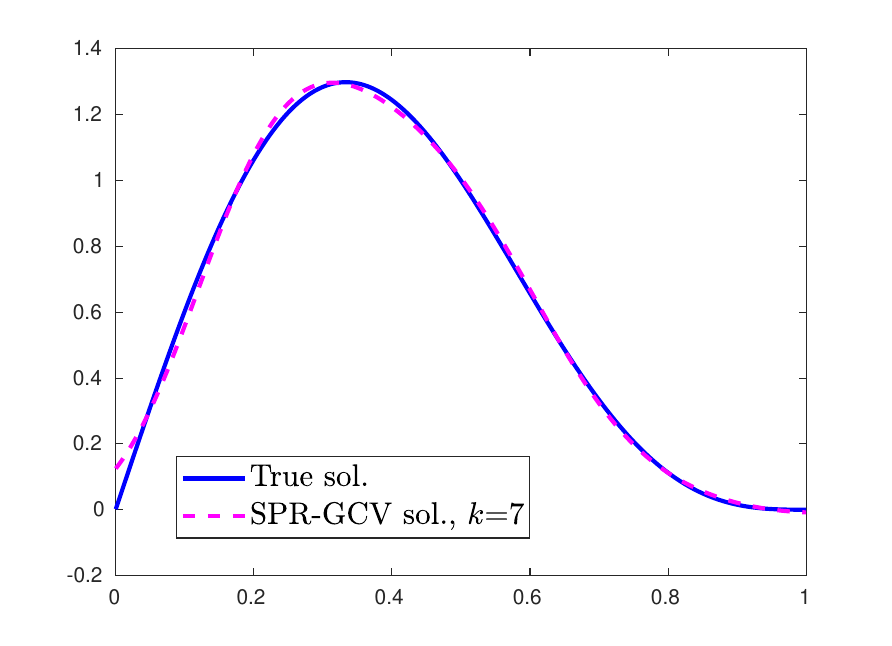}}
	\subfloat{\label{fig:3e}\includegraphics[width=0.36\textwidth]{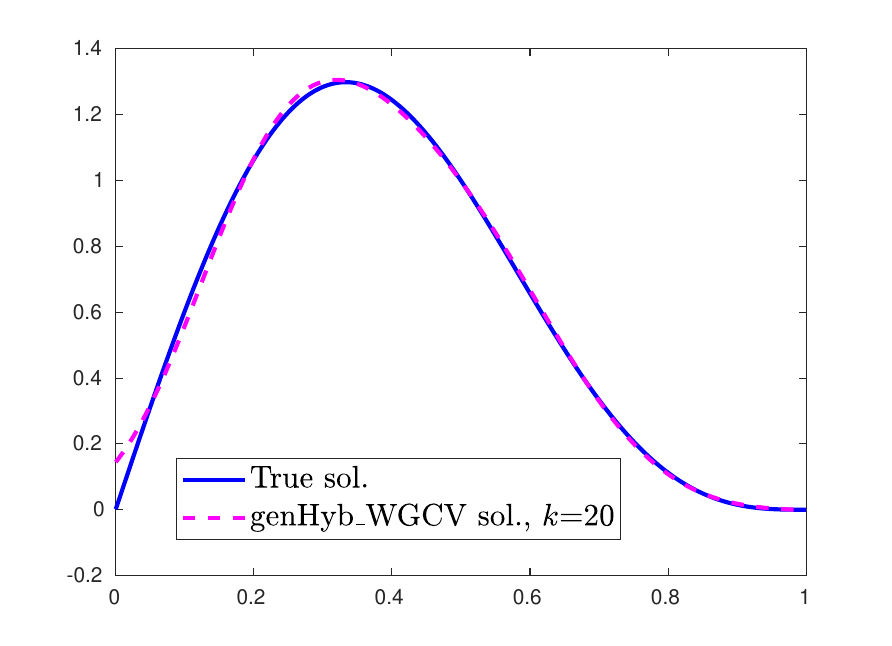}}
	\caption{Reconstructed solutions by genGKB\_SPR and genHyb\_WGCV for {\sf gravity}.}
	\label{fig3}
\end{figure}

The convergence behaviors of genGKB\_SPR and genHyb\_WGCV are shown in Figure \ref{fig2} using the relative error curves. The final regularized solutions and the corresponding iteration number are shown in Table \ref{tab1}. In this experiment, all of the DP, LC and GCV methods under-estimate the semi-convergence point $k_0$, but the estimated early stopping iterations are only slightly smaller than $k_0$, and the reconstructed solutions approximate well to the true solution. 

\begin{table}[!htbp]
	\centering
	\caption{Relative error of the final regularized solution and corresponding early stopping iteration number (in parentheses) for {\sf gravity} and {\sf shaw}.}
	\scalebox{1}{
		\begin{tabular}{llllll}
			\toprule
			Problem     & SPR-best  & SPR-DP & SPR-LC  & SPR-GCV & genHyb\_WGCV          \\
			\midrule
			{\sf gravity} & 0.0244 (8) & 0.0337 (6) & 0.0272 (7) & 0.0272 (7) & 0.0289 (20)  \\
			{\sf shaw} & 0.0487 (7) & 0.0613 (6) & 0.0983 (5) & 0.1706 (8) & 0.0761 (20)  \\
			\bottomrule
	\end{tabular}}
	\label{tab1}
\end{table}

For genHyb\_WGCV, the iterative solution tends to approximate the best Tikhonov regularization solution, but it takes much more iterations to reach a similar accuracy as the genGKB\_SPR solution at $k_0$. From the right subfigure of Figure \ref{fig2}, we find that WGCV always determines a larger regularization parameter than $\lambda_{opt}$ during the iteration, thereby computing an over-smoothed solution. Although there are currently no rigorous theoretical results, we can observe numerically that the convergence rate of the regularization parameter in genHyb\_WGCV is fast at first and then gradually slows down; this may be why the genHyb\_WGCV solution converges slowly to the best Tikhonov regularization solution. To compare the two methods, we can see from Table \ref{tab1} that for genGKB\_SPR with DP, the solution at $k=6$ is only slightly less accurate than the genHyb\_WGCV solution at $k=20$, while for genGKB\_SPR with LC or GCV the solution at $k=7$ has a slightly higher accuracy than the genHyb\_WGCV solution at $k=20$.

\paragraph{Experiment 2.}
The second small-scale problem is {\sf shaw} that models a one-dimensional image restoration using the Fredholm integral equation \eqref{fred}, where the kernel $K$ and solution $f$ are given by
\begin{align*}
	K(s, t) &= (\cos s + \cos t)^2 \left(\frac{\sin u}{u}\right)^2, \ \ \
	u = \pi(\sin s + \sin t) ,  \\
	f(t) &= 2\exp\left(-6(t-0.8)^2\right) + \exp\left(-2(t+0.5)^2\right) .
\end{align*}
The discretization approach is the same as {\sf gravity} and $m=n=2000$. We constructed a non-white Gaussian noise $\bepsilon\sim\mathcal{N}(\boldsymbol{0}, \bM)$ as follows such that $\bM$ is a diagonal matrix. First formulate a vector $\bd=(d_1,\dots,d_m)$ with elements of uniform random integers between $[1, 5]$. Then let $\bM=\mathrm{diag}(\gamma\bd)$, where $\gamma$ satisfies that the noise level $\mathbb{E}[\|\bepsilon\|_{2}]/\|\bb_{\text{true}}\|_{2}\approx \left(\mathbb{E}[\|\bepsilon\|_{2}^2]/\|\bb_{\text{true}}\|_{2}^2\right)^{1/2}=(\gamma\sum_{i=1}^{m}d_i)^{1/2}/\|\bb_{\text{true}}\|_{2}=10^{-2}$. The true solution and noisy observed data are shown in Figure \ref{fig4}.  The covariance matrix $\bN$ of the Gaussian prior of $\bx$ comes from the exponential kernel
\begin{equation*}
	\kappa_{exp}(r):= \exp\left(-(r/l)^\gamma\right),
\end{equation*}
and the parameters $l$ and $\nu$ are set as $l=0.1$ and $\nu=1$.

\begin{figure}[htbp]
	\centering
	\subfloat 
	{\label{fig:4a}\includegraphics[width=0.35\textwidth]{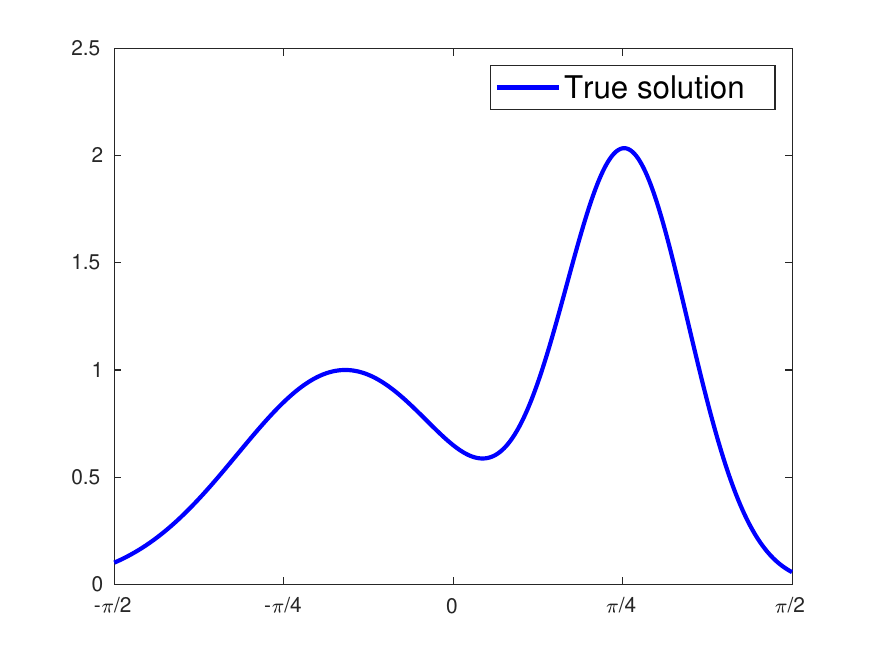}}
	\subfloat{\label{fig:4b}\includegraphics[width=0.35\textwidth]{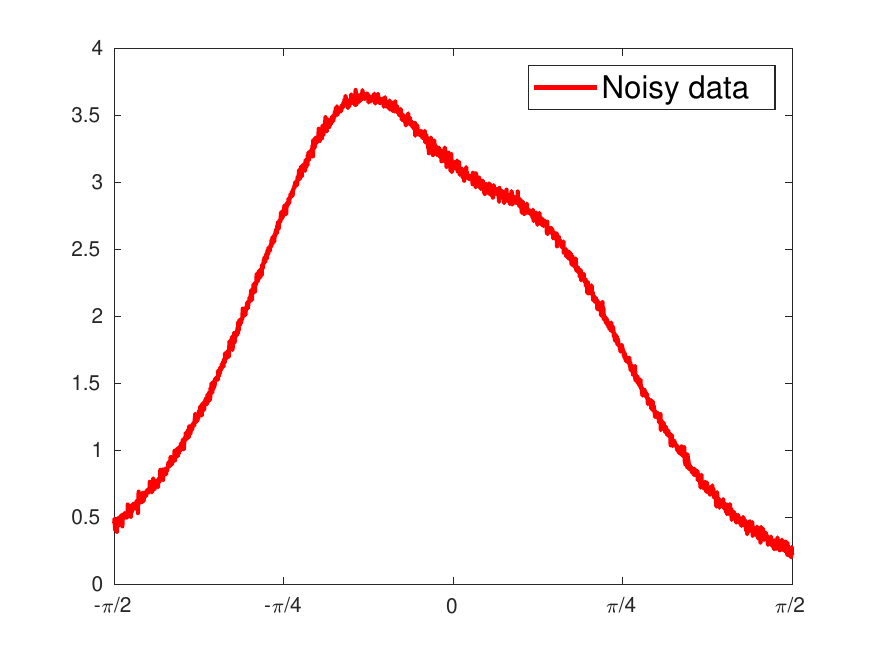}}
	\caption{Illustration of the true solution and noisy observed data for {\sf shaw}.}
	\label{fig4}
\end{figure}

\begin{figure}[htbp]
	\centering
	\subfloat
	{\label{fig:5a}\includegraphics[width=0.42\textwidth]{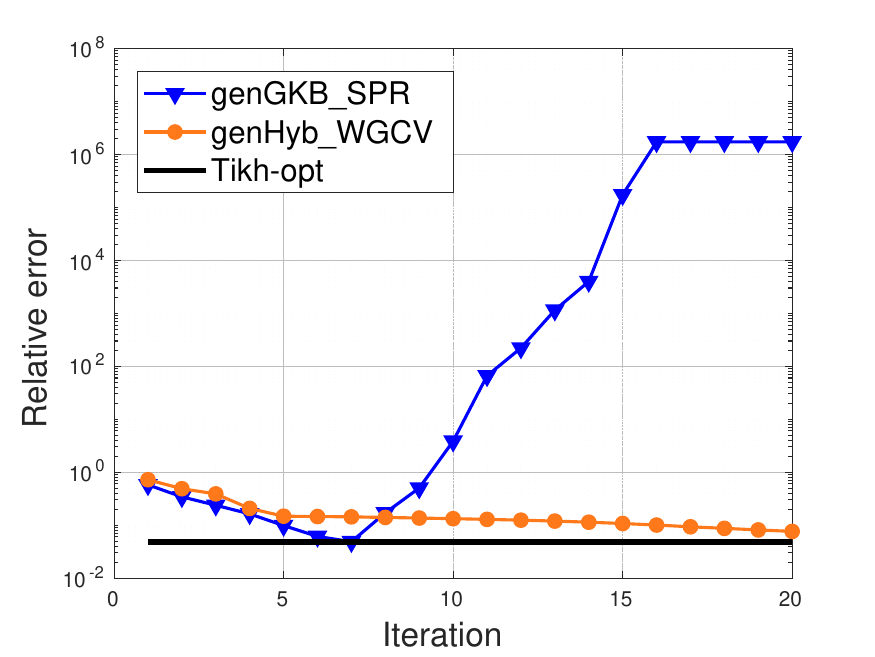}}
	\subfloat
	{\label{fig:5b}\includegraphics[width=0.42\textwidth]{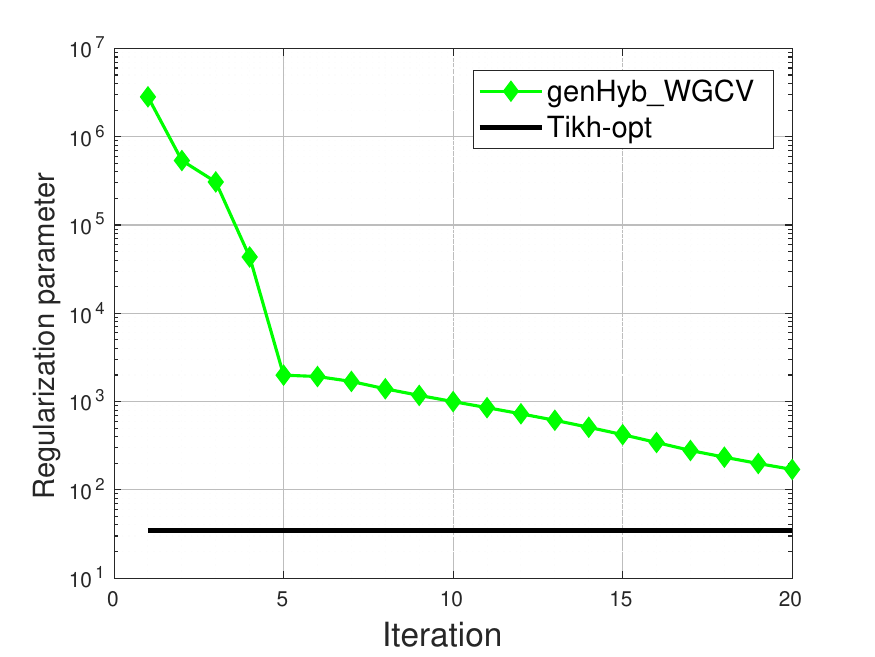}}
	\caption{Comparison of convergence behaviors between genGKB\_SPR and genHyb\_WGCV for {\sf shaw}. Left: relative error curves of regularized solution with respect to iteration number; Right: convergence trend of regularization parameters of genHyb\_WGCV towards  $\lambda_{opt}$.}
	\label{fig5}
\end{figure}

\begin{figure}[htbp]
	\centering
	\subfloat 
	{\label{fig:6a}\includegraphics[width=0.35\textwidth]{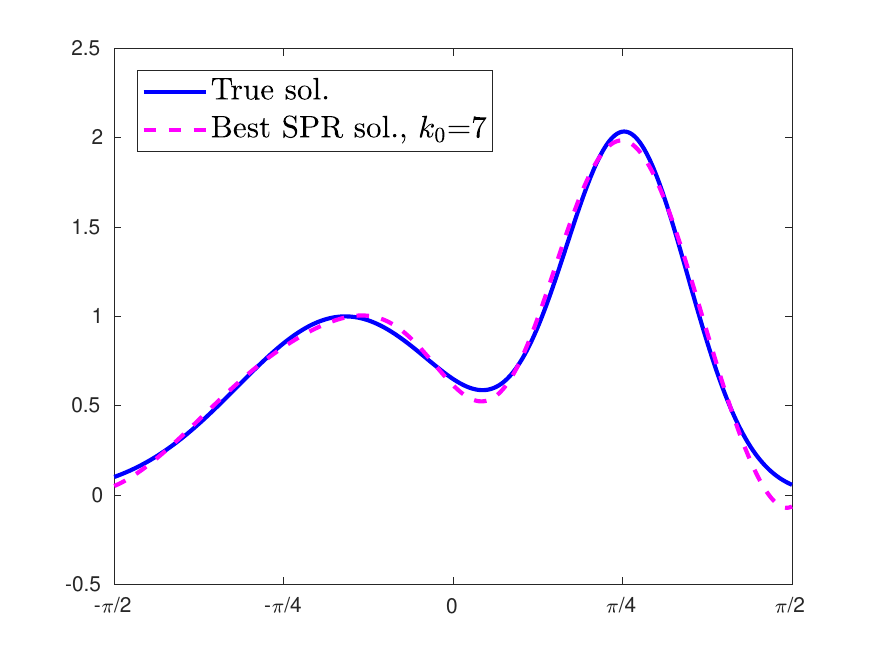}}
	\subfloat{\label{fig:6b}\includegraphics[width=0.35\textwidth]{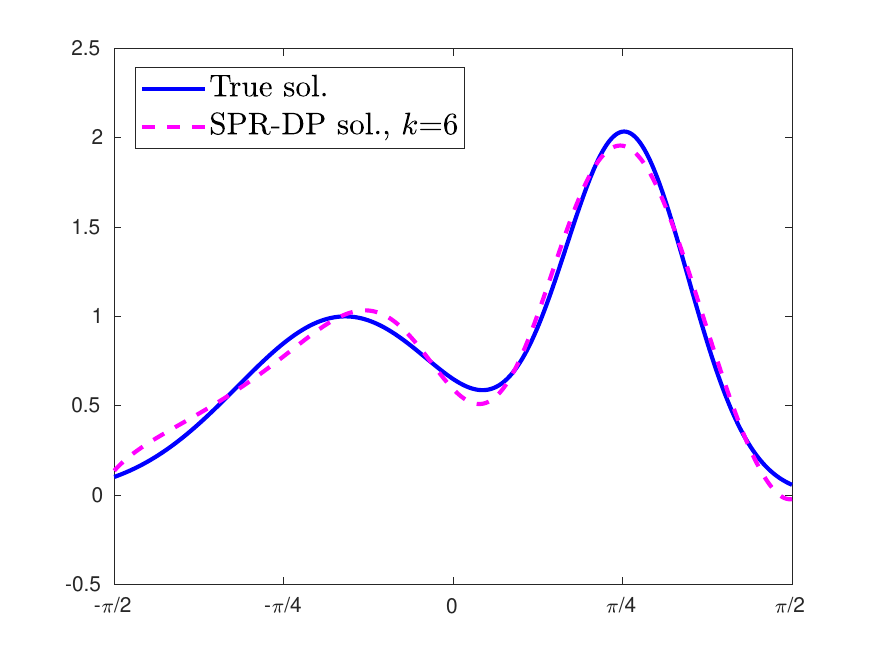}}
	\vspace{-5mm}
	\subfloat 
	{\label{fig:6c}\includegraphics[width=0.35\textwidth]{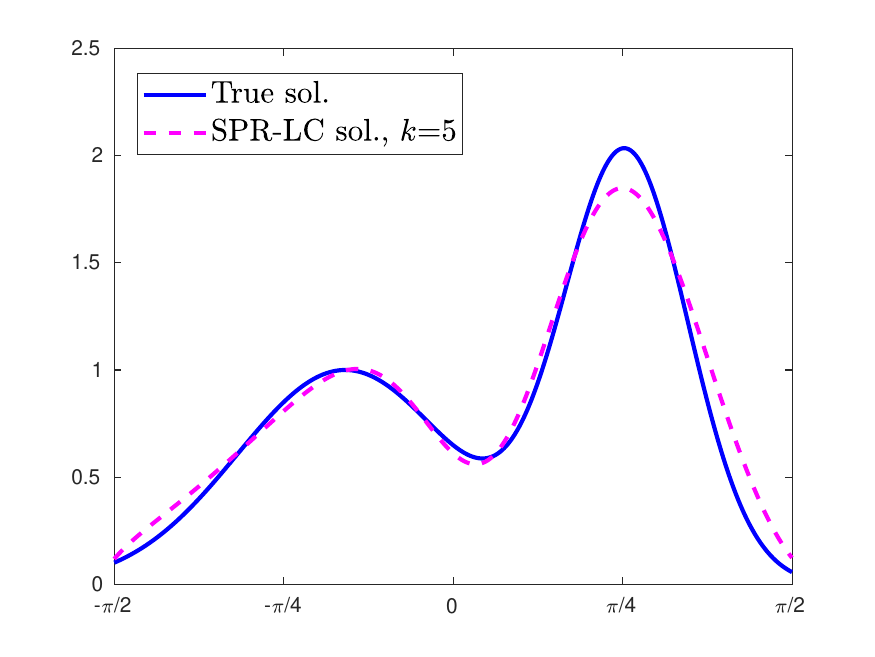}}
	\subfloat{\label{fig:6d}\includegraphics[width=0.35\textwidth]{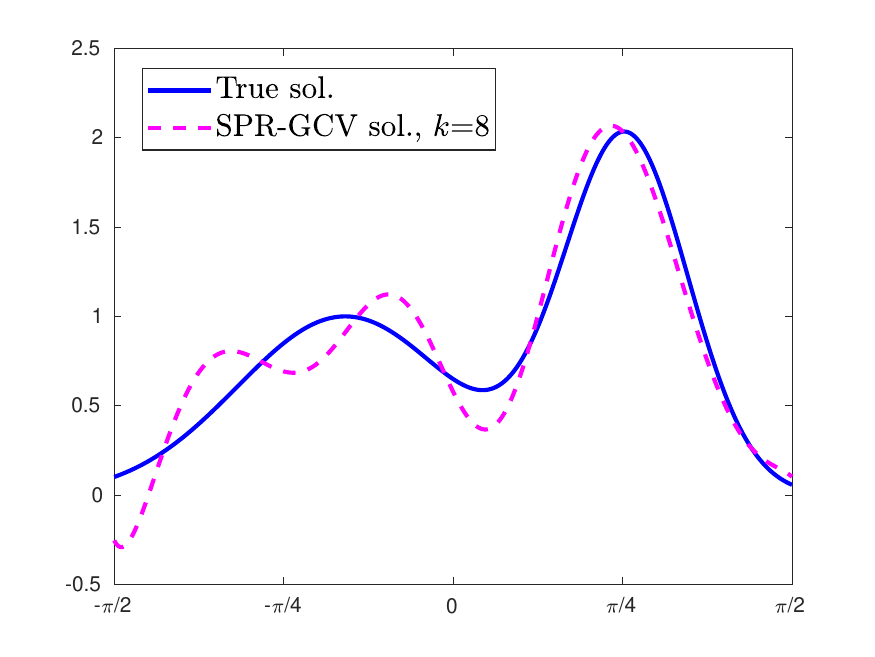}}
	\subfloat{\label{fig:6e}\includegraphics[width=0.36\textwidth]{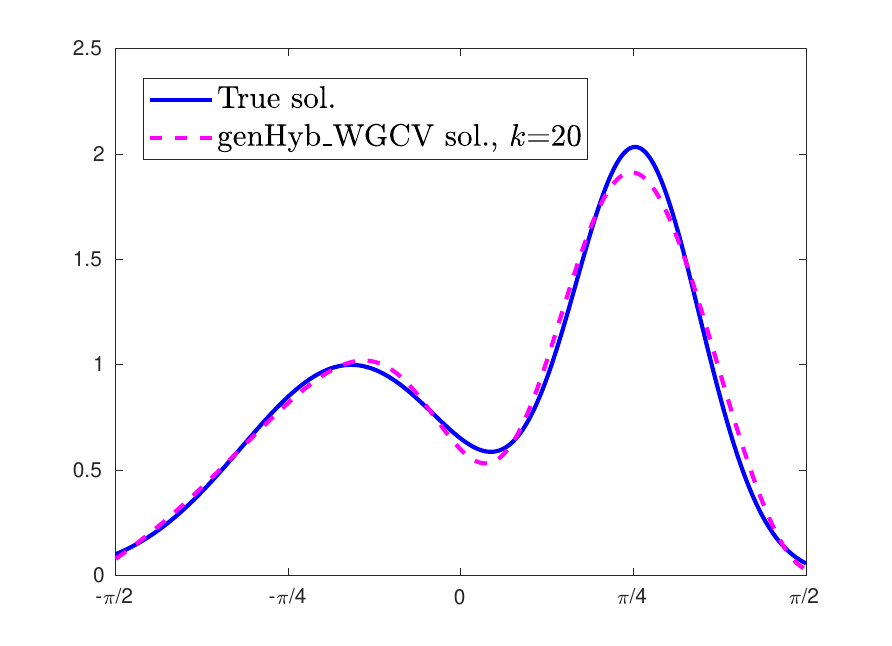}}
	\caption{Reconstructed solutions by genGKB\_SPR and genHyb\_WGCV for {\sf shaw}.}
	\label{fig6}
\end{figure}

The convergence behaviors of genGKB\_SPR and genHyb\_WGCV for {\sf shaw} are shown in Figure \ref{fig5}, while the reconstructed solutions are drawn in Figure \ref{fig6}. We can observe similar phenomena as the first experiment: the genGKB\_SPR solution exhibits the semi-convergence property, while the genHyb\_WGCV gradually converges to the best Tikhonov regularization solution and it takes much more iterations to reach a similar accuracy as the genGKB\_SPR solution at $k_0$. From Table \ref{tab1} we find that DP and LC slightly under-estimate $k_0$ while GCV over-estimate $k_0$. In this experiment, the relative error for genGKB\_SPR increases rapidly after the semi-convergence point, resulting in the GCV solution at $k=8$ with a relatively poor accuracy. However, the DP or LC solution of genGKB\_SPR at $k=6$ or $k=5$ has a satisfied accuracy, which is comparable to the genHyb\_WGCV solution at $k=20$.

%----------------------------------------------------------
\subsection{Two-dimensional inverse diffusion}
In this experiment, we consider the two-dimensional (2D) inverse diffusion problem {\sf PRdiffusion} \cite{Gazzola2019}. The forward problem is a 2D
diffusion equation in the spatial domain $\Omega=[0,1]^2$ with homogenous Neumann boundary condition and an initial function $u_0$:
\begin{equation*}
	\left\{
	\begin{aligned}
		& \frac{\partial u}{\partial t} = \Delta u, \ \ (x,y)\in \Omega, \ t>0,  \\
		& \frac{\partial u}{\partial \mathbf{n}} = 0, \ \ (x,y)\in\partial\Omega, \ t>0, \\
		& u(0,x,y) = u_0(x,y), \ \ (x,y)\in\Omega ,
	\end{aligned}
	\right.
	\qquad	
\end{equation*}
where $\mathbf{n}$ denotes the exterior normal to the boundary $\partial\Omega$. The inverse problem is to reconstruct $u_0$ from the diffusion solution $u_T=u(T,x,y)$ at time $t=T$. In the experiment, we set $T=0.005$ and $u_0$ be the following smooth function:
\begin{equation*}
	u_0(x,y) = 0.7\exp\left(-\left(\frac{x-0.4}{0.12}\right)^2-\left(\frac{y-0.5}{0.15}\right)^2\right) +
	\exp\left(-\left(\frac{x-0.7}{0.1}\right)^2-\left(\frac{y-0.8}{0.08}\right)^2\right) .
\end{equation*}
The above forward PDE is discretized on a uniform finite element mesh with $2(n_1-1)^2$ triangular elements, thereby $u$ is represented by the $n=n_{1}^2$ values at the corners of the elements, and $[0, T]$ is divided into 100 uniform time steps. The forward computation is the numerical solution of the above PDE implemented by the Crank-Nicolson-Galerkin finite element method, thereby $\bA$ is a functional handle. By choosing $n_1=128$, we have $\bA\in\mathbb{R}^{128^2\times128^2}$, which maps $\bx_{\text{true}}$ (the discretized $u_0$) to $\bb_{\text{true}}$ (the discretized $u_T$), and we add a Gaussian white noise $\bepsilon$ with noise level $10^{-3}$ to $\bb_{\text{true}}$ to get $\bb$. The true solution and noisy observed data are shown in Figure \ref{fig7}.

\begin{figure}[htbp]
	\centering
	\subfloat 
	{\label{fig:7a}\includegraphics[width=0.40\textwidth]{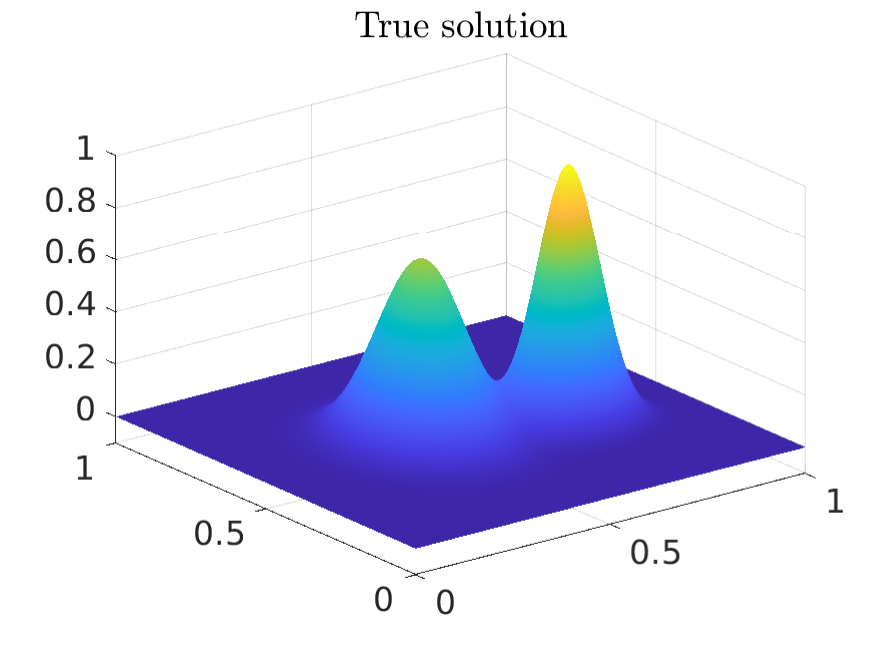}}
	\subfloat
	{\label{fig:7b}\includegraphics[width=0.40\textwidth]{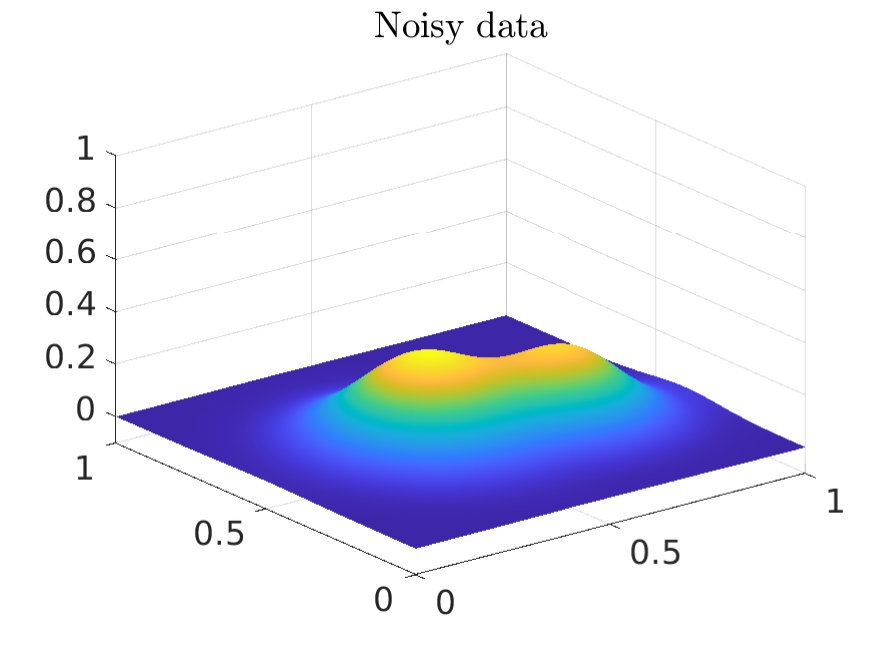}}
	\caption{Illustration of the true solution and noisy observed data for {\sf PRdiffusion}.}
	\label{fig7}
\end{figure}

In order to reconstruct the smooth initial function $u_0$, we assume a Gaussian prior of $\bx$ with the covariance matrix $\bN$ coming from the Mat\'{e}rn kernel
\begin{equation*}
	\kappa_{M}(r):= \frac{2^{1-\nu}}{\Gamma(\nu)}\Bigg(\frac{\sqrt{2\nu}r}{l}\Bigg)^\nu K_\nu\Bigg(\frac{\sqrt{2\nu}r}{l}\Bigg),
\end{equation*}
where $\Gamma$ is the gamma function, $K_{\nu}$ is the modified Bessel function of the second kind, and $l$ and $\nu$ are two positive parameters of the covariance. The parameter $\nu$ controls the smoothness of the resulting function; see \cite[\S 4.2]{Williams2006gaussian}. The smaller the value of $\nu$, the less smooth the approximated function becomes. We set $l=0.05$ and $\nu=5/2$ (for twice differentiable functions), respectively. 

\begin{figure}[htbp]
	\centering
	\subfloat{\label{fig:8a}\includegraphics[width=0.36\textwidth]{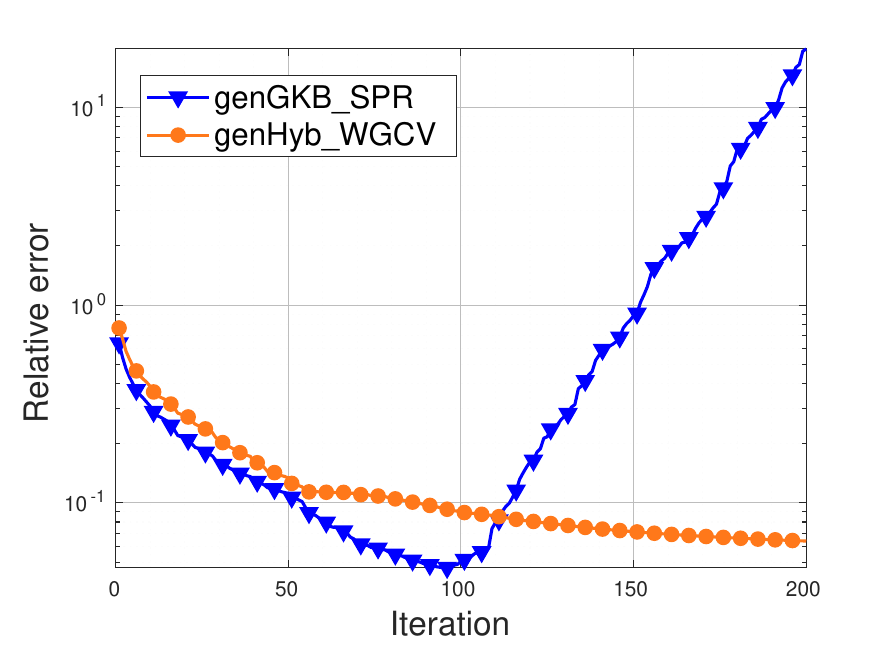}}\hspace{-5mm}
	\subfloat{\label{fig:8b}\includegraphics[width=0.35\textwidth]{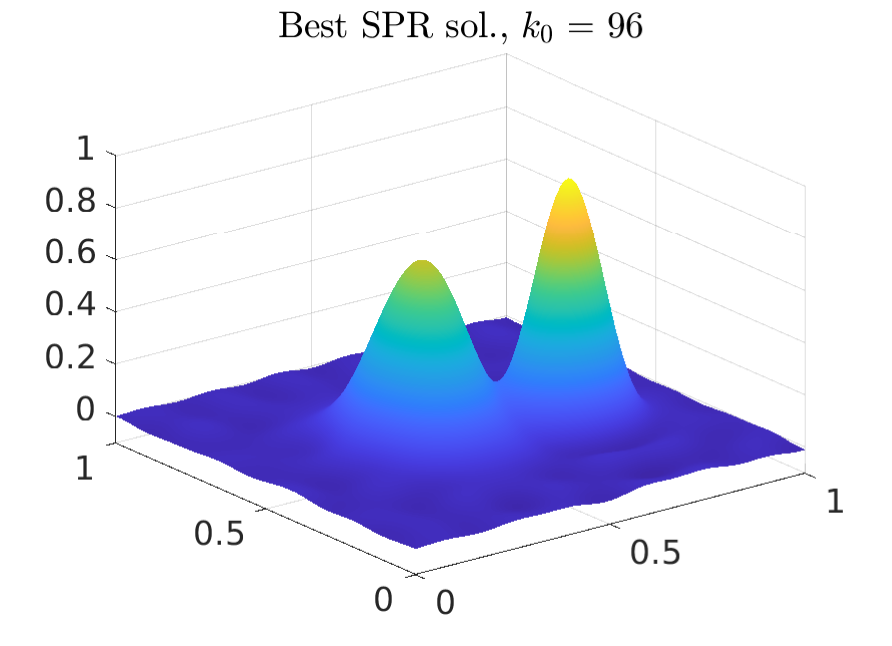}}\hspace{-6.5mm}
	\subfloat{\label{fig:8c}\includegraphics[width=0.35\textwidth]{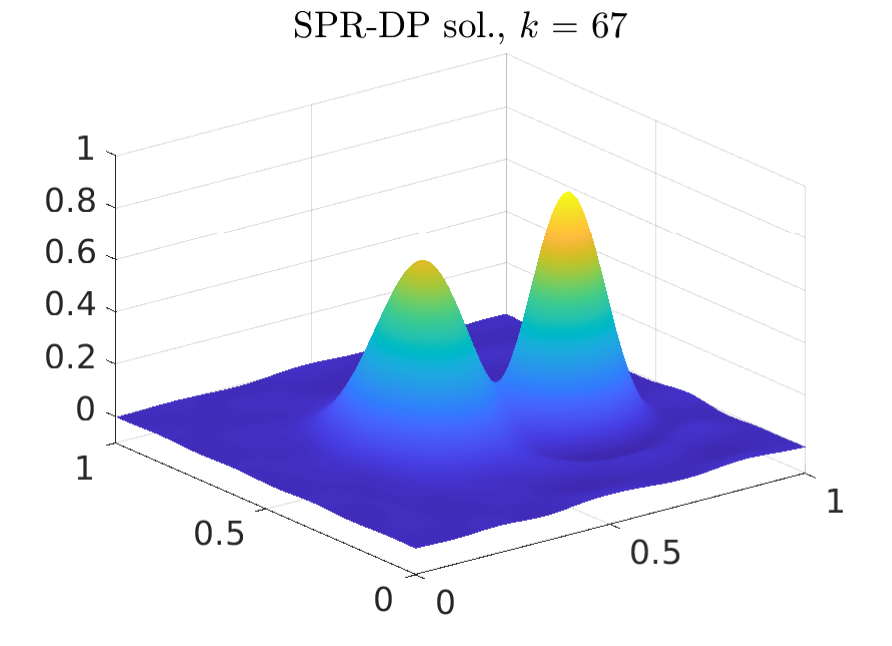}}
	\vspace{-1mm}
	\subfloat{\label{fig:8d}\includegraphics[width=0.35\textwidth]{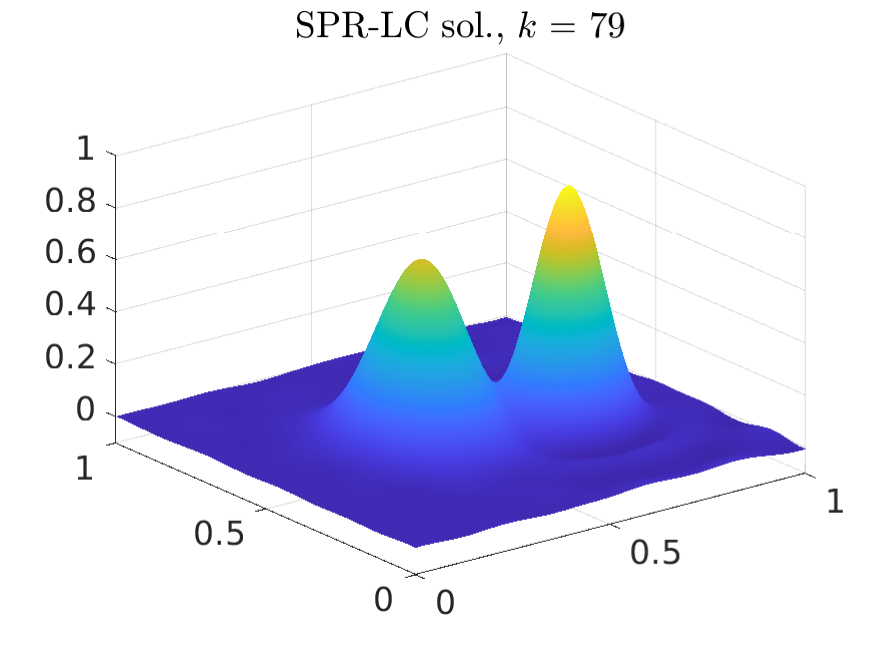}}\hspace{-6mm}
	\subfloat{\label{fig:8e}\includegraphics[width=0.35\textwidth]{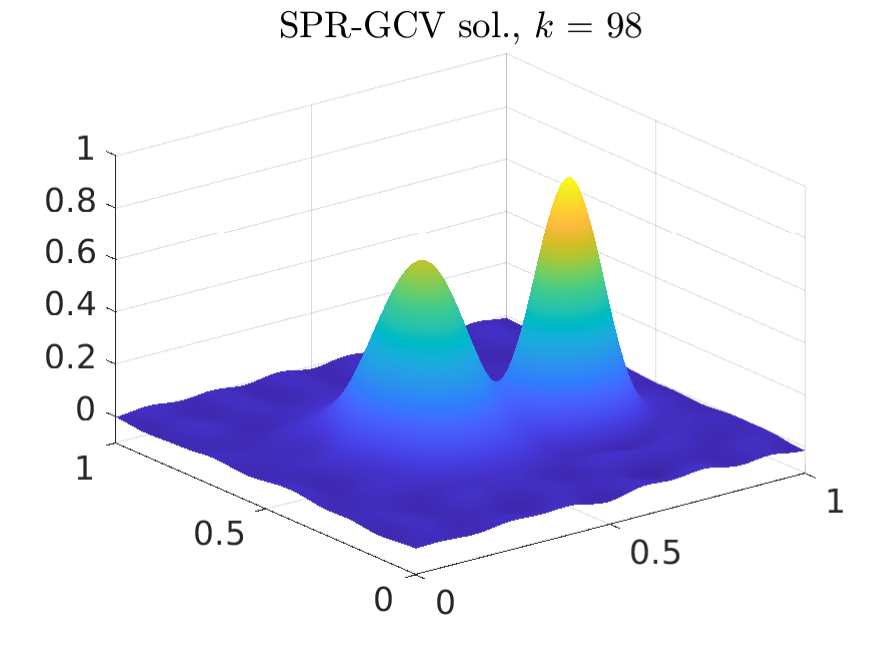}}\hspace{-6mm}
	\subfloat{\label{fig:8f}\includegraphics[width=0.35\textwidth]{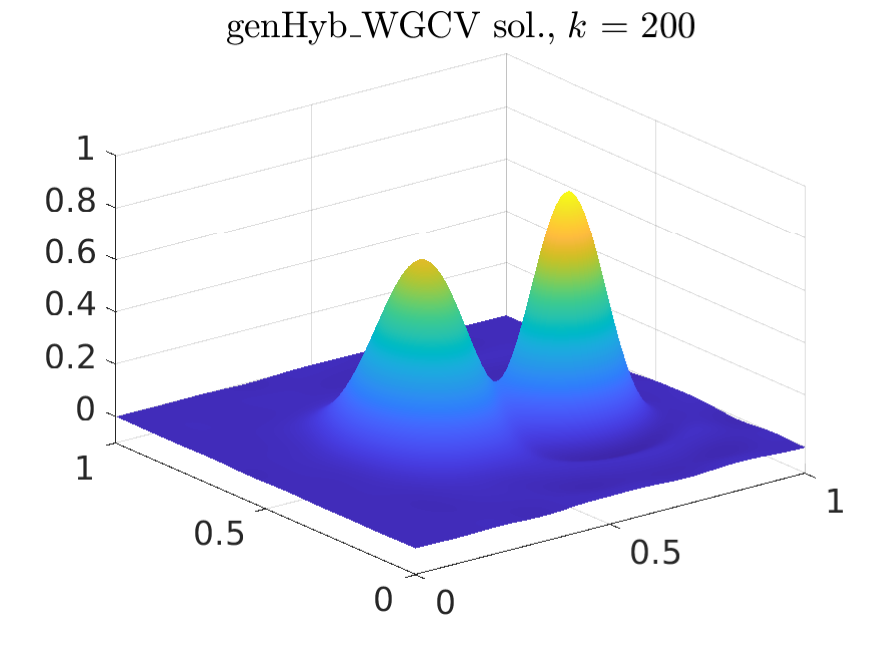}}
	\caption{Convergence behaviors and reconstructed solutions by genGKB\_SPR and genHyb\_WGCV for {\sf PRdiffusion}.}
	\label{fig8}
\end{figure}

From Figure \ref{fig8} we find genGKB\_SPR exhibits the semi-convergence behavior and the best regularized solution is obtained at $k_0=96$. The estimates of $k_0$ by DP, LC and GCV and the relative errors of corresponding solutions are shown in Table \ref{tab2}. In comparison, the genHyb\_WGCV solution convergences very slowly; its solution only reaches an accuracy slightly poorer than the best genGKB\_SPR solution even for $k=200$, much larger than $k_0$. As shown in Table \ref{tab2}, the DP, LC, and GCV solutions of genGKB\_SPR also have higher accuracy than the genGKB\_SPR solution at $k=200$. Considering this, for large-scale problems, it will take much more time for genHyb\_WGCV than genGKB\_SPR to obtain a satisfied regularized solution. We find that the reconstructed solutions by genGKB\_SPR have good smoothness and a higher similarity to $u_0$. This is due to the elaborately constructed solution subspaces by gen-GKB that incorporate prior information of $\bx$ encoded by the Mat\'{e}rn kernel to enforce the second order differentiability of a desired $\bx$.

\begin{table}[htbp]
	\centering
	\caption{Relative error of the final regularized solution and corresponding early stopping iteration number (in parentheses) for {\sf PRdiffusion} and {\sf PRspherical}.}
	\scalebox{1}{
		\begin{tabular}{llllll}
			\toprule
			Problem     & SPR-best  & SPR-DP & SPR-LC  & SPR-GCV & genHyb\_WGCV          \\
			\midrule
			{\sf PRdiffusion} & 0.0470 (96) & 0.0704 (67) & 0.0566 (79) & 0.0483 (98) & 0.0641 (200)  \\
			{\sf shaw} & 0.0026 (25) & 0.0046 (17) & 0.0033 (20) & 0.0156 (67) & 0.0106 (100)  \\
			\bottomrule
	\end{tabular}}
	\label{tab2}
\end{table}

%----------------------------------------------------------
\subsection{Two-dimensional spherical Radon transform tomography}
In this experiment, we test our method by a 2D spherical Radon transform (SRT) tomography. The SRT integrates a function over a set of spheres (or circles in the 2D case) to get a noisy observation, which plays an important role in some newly developing types of tomography such as photo-acoustic imaging \cite{klukowska2013snark09}. Here we consider the 2D SRT problem where the forward operator is generated by {\sf PRspherical} with default settings \cite{Gazzola2019}; for detailed descriptions of {\sf PRspherical} see \cite[\S 3.2]{hansen2018air}. The true solution is a $256\times 256$ pixel 2D smooth image consisting of a superposition of four Gaussian functions. Following the same approach as the experiment for {\sf shaw}, we add a Gaussian non-white noise $\bepsilon$ with noise level $10^{-2}$ to $\bb_{\text{true}}$ to get $\bb$. In the experiment $\bA\in\mathbb{R}^{92672 \times 65536}$. The true image and noisy observed data are shown in Figure \ref{fig7}. In order to reconstruct a smooth image, we assume a Gaussian prior of $\bx$ with Mat\'{e}rn kernel, where the two parameters are set as $l=500$ and $\nu=1/2$, respectively.

\begin{figure}[htbp]
	\centering
	\subfloat 
	{\label{fig:9a}\includegraphics[width=0.30\textwidth]{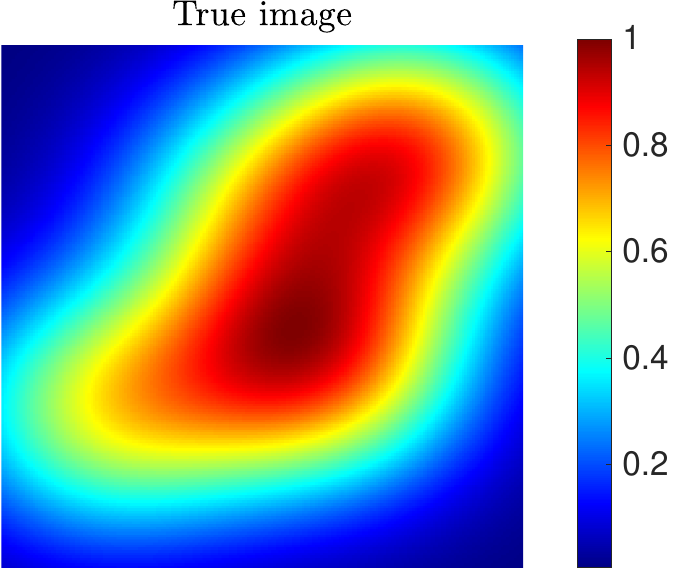}} \hspace{5mm}
	\subfloat
	{\label{fig:9b}\includegraphics[width=0.38\textwidth]{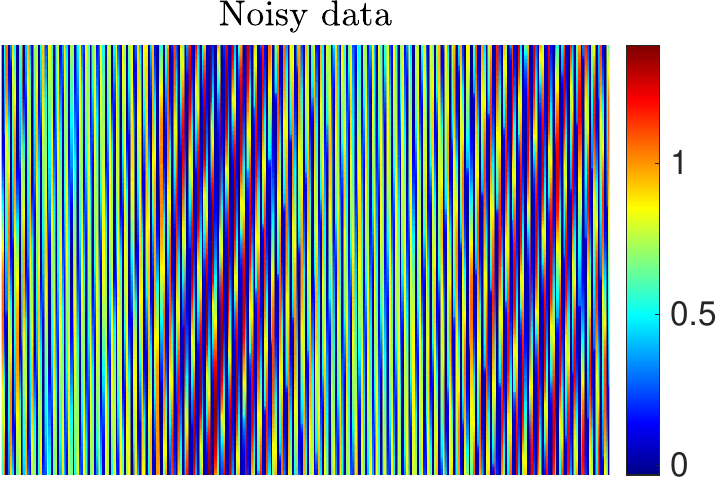}}
	\caption{Illustration of the true image and noisy observed data for {\sf PRspherical}.}
	\label{fig9}
\end{figure}

\begin{figure}[htbp]
	\centering
	\hspace{-8mm}
	\subfloat{\label{fig:10a}\includegraphics[width=0.35\textwidth]{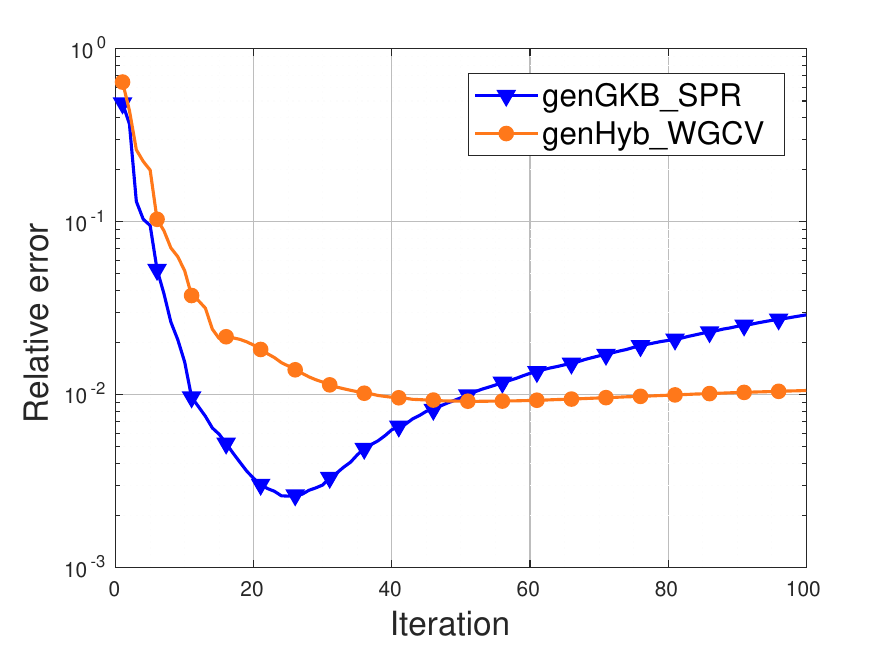}}\hspace{1mm}
	\subfloat{\label{fig:10b}\includegraphics[width=0.30\textwidth]{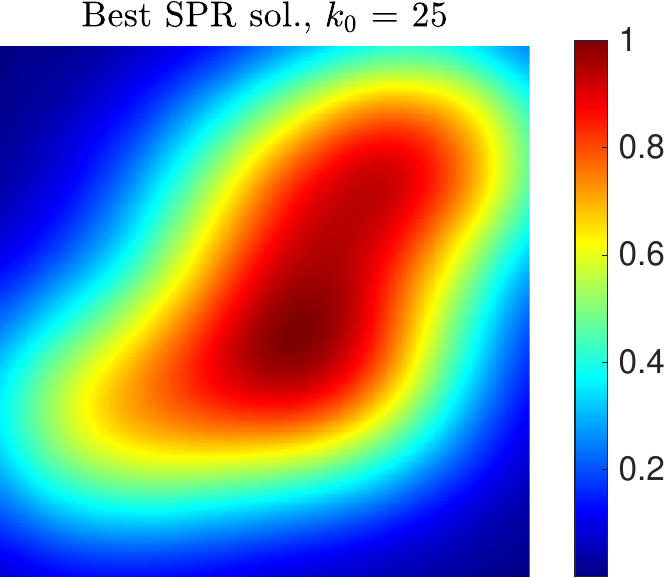}} \hspace{2mm}
	\subfloat{\label{fig:10c}\includegraphics[width=0.30\textwidth]{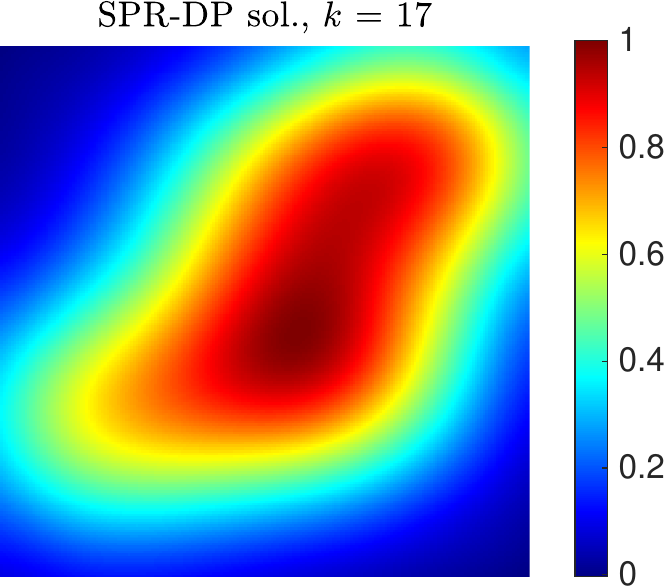}} 
	\vspace{-0.5mm} 
	\subfloat{\label{fig:10d}\includegraphics[width=0.30\textwidth]{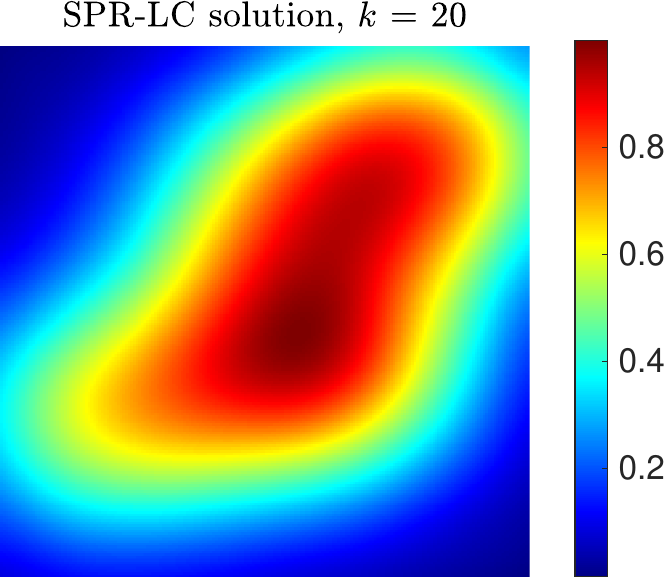}}\hspace{2mm}
	\subfloat{\label{fig:10e}\includegraphics[width=0.30\textwidth]{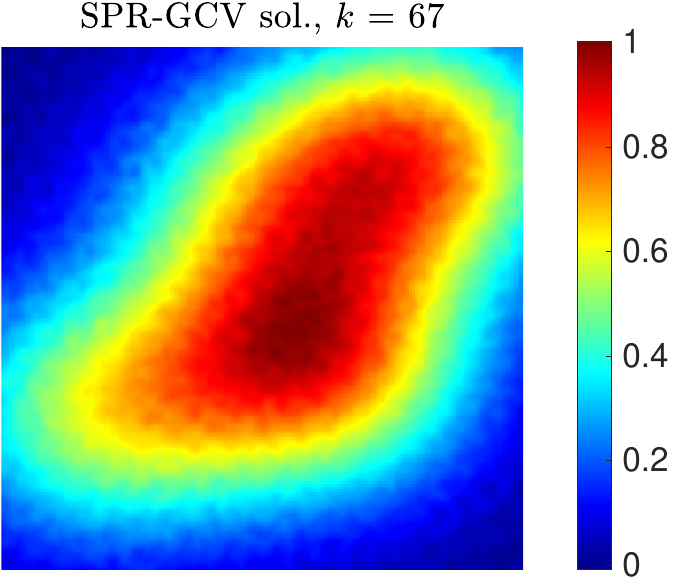}}\hspace{2mm}
	\subfloat{\label{fig:10f}\includegraphics[width=0.30\textwidth]{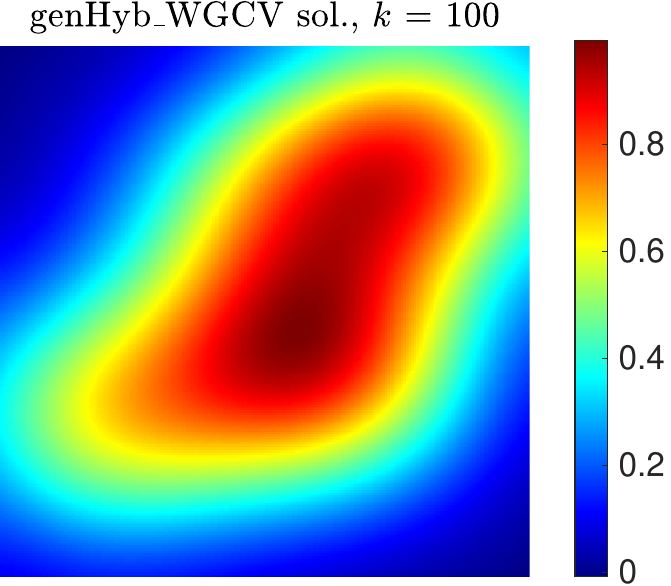}}
	\caption{Convergence behaviors and reconstructed solutions by genGKB\_SPR and genHyb\_WGCV for {\sf PRspherical}.}
	\label{fig10}
\end{figure}

The convergence behaviors of the two methods and corresponding reconstructed solutions are shown in Figure \ref{fig9}. For this large-scale inverse problems, we can find that the genGKB\_SPR method only takes 25 steps to reach the semi-convergence point, while even after 100 steps, genHyb\_WGCV only obtains a solution with significantly lower accuracy than the best genGKB\_SPR solution. If we carefully observe the convergence curve, we can notice that the relative error of the genHyb\_WGCV solution slightly increases as $k$ grows from 60 to 100. This potential instability of convergence for the hybrid methods has already been pointed out in \cite{Chungnagy2008} and \cite{Renaut2017}, which is caused by the instability of determining regularized parameter for the projected problem \eqref{hyb_k} regardless of the parameter choosing method used. Although the estimates of $k_0$ for genGKB\_SPR by DP and LC are slightly smaller than the true value, resulting in two over-smoothed solutions, we can still observe that the reconstructed images have satisfactory quality. The relative errors of them are both smaller than the genHyb\_WGCV solution at $k=100$ (also smaller than the best genHyb\_WGCV solution at around $k=60$). However, in this experiment, the GCV method suffers severely from overestimating the true $k_{0}$, and the corresponding solution deviates significantly from the best genGKB\_SPR, resulting in a very poor accuracy.

Overall, the aforementioned experiments have confirmed that the genGKB\_SPR algorithm combined with a proper early stopping rule provides a viable alternative for directly solving the Bayesian linear inverse problems \eqref{Bayes1}. The numerical tests have shown that genGKB\_SPR with an early stopping rule is more computationally effective and robust for reconstructing solutions compared to the hybrid method genHyb\_WGCV, especially for large-scale problems.

%%------------------------------------------------------------------------
\section{Conclusion}\label{sec6}
In this paper, we have proposed a new iterative regularization algorithm for solving Bayesian linear inverse problems. Within the subspace projection regularization framework, we first introduced an iterative process that can generate a series of valid solution subspaces by treating the forward model matrix as a linear operator between the two Hilbert spaces $(\mathbb{R}^{n}, \langle\cdot,\cdot\rangle_{\bN^{-1}})$ and $(\mathbb{R}^{m}, \langle\cdot,\cdot\rangle_{\bM^{-1}})$. The original problem is projected onto these subspaces becoming a series of low dimensional linear least squares problems. We have developed an efficient procedure to update the solutions of these projected problems as approximate iterative regularization solutions, as well as update the residual norm and solution norm of them. Early stopping rules based on DP, LC and GCV have been designed, with which the iterative algorithm can get a regularized solution to the original problem with a satisfied accuracy. Several theoretical results have been established, revealing the good regularization properties of the algorithm. Numerical experiments using both small-scale and large-scale inverse problems have been conducted to demonstrate the robustness and efficiency of the algorithm. Since the most computationally intensive operations in the proposed algorithm only involve matrix-vector products, it is highly efficient for large-scale Bayesian inverse problems.

%\appendix
%\section{An example appendix} 

% \section*{Acknowledgments}

% Authors must disclose all relationships or interests that 
% could have direct or potential influence or impart bias on 
% the work: 
%
\section*{Declarations}
\textbf{Funding.} No funding is related to this research. \\
~\\
\textbf{Financial interests.} The authors declare they have no financial interests. \\
~\\
\noindent \textbf{Data and code availability.} The source code and data used in this work are available at \url{https://github.com/Machealb/InverProb_IterSolver}.  

%
% The authors declare that they have no conflict of interest.

% BibTeX users please use one of
%\bibliographystyle{spbasic}      % basic style, author-year citations
\bibliographystyle{spmpsci}      % mathematics and physical sciences
%\bibliographystyle{spphys}       % APS-like style for physics
%\bibliography{}   % name your BibTeX data base
\bibliography{refs}

\end{document}